\renewcommand{\wr}{\mathbin{\mathrm{wr}}}
\newcommand{\LO}{\mathrm{LO}}
\newcommand{\CLO}{\mathrm{CLO}}
\newcommand{\Diff}{\mathrm{Diff}}
\newcommand{\lex}{\textit{lex\,}}
\newenvironment{enumerate-(a)}{\begin{enumerate}[label={\upshape (\alph*)}, leftmargin=2pc]}{\end{enumerate}}
\newenvironment{enumerate-(a)-r}{\begin{enumerate}[label={\upshape (\alph*)}, leftmargin=2pc,resume]}{\end{enumerate}}
\newenvironment{enumerate-(a)-5}{\begin{enumerate}[label={\upshape (\alph*)}, leftmargin=2pc,start=5]}{\end{enumerate}}
\newenvironment{enumerate-(A)}{\begin{enumerate}[label={\upshape (\Alph*)}, leftmargin=2pc]}{\end{enumerate}}
\newenvironment{enumerate-(A)-r}{\begin{enumerate}[label={\upshape (\Alph*)}, leftmargin=2pc,resume]}{\end{enumerate}}
\newenvironment{enumerate-(i)}{\begin{enumerate}[label={\upshape (\roman*)}, leftmargin=2pc]}{\end{enumerate}}
\newenvironment{enumerate-(i)-r}{\begin{enumerate}[label={\upshape (\roman*)}, leftmargin=2pc,resume]}{\end{enumerate}}
\newenvironment{enumerate-(I)}{\begin{enumerate}[label={\upshape (\Roman*)}, leftmargin=2pc]}{\end{enumerate}}
\newenvironment{enumerate-(I)-r}{\begin{enumerate}[label={\upshape (\Roman*)}, leftmargin=2pc,resume]}{\end{enumerate}}
\newenvironment{enumerate-(1)}{\begin{enumerate}[label={\upshape (\arabic*)}, leftmargin=2pc]}{\end{enumerate}}
\newenvironment{enumerate-(1)-r}{\begin{enumerate}[label={\upshape (\arabic*)}, leftmargin=2pc,resume]}{\end{enumerate}}
\newtheorem{theorem}{Theorem}[section]
\newtheorem{lemma}[theorem]{Lemma}
\newtheorem{corollary}[theorem]{Corollary}
\newtheorem{proposition}[theorem]{Proposition}
\newtheorem{question}[theorem]{Question}
\newtheorem{fact}[theorem]{Fact}
\theoremstyle{definition}
\newtheorem{definition}[theorem]{Definition}
\newtheorem{example}[theorem]{Example}
\newtheorem{conjecture}[theorem]{Conjecture}
\theoremstyle{remark}
\newtheorem{remark}[theorem]{Remark}
\providecommand{\customgenericname}{}
\newcommand{\newcustomtheorem}[2]{%
  \newenvironment{#1}[1]
  {%
   \renewcommand\customgenericname{#2}%
   \renewcommand\theinnercustomgeneric{##1}%
   \innercustomgeneric
  }
  {\endinnercustomgeneric}
}
\begin{document}

\title[The Borel complexity of the space of left-orderings]{The Borel complexity of the space of left-orderings, low-dimensional topology, and dynamics}

\date{}
\author[F.~Calderoni]{Filippo Calderoni}

\address{Department of Mathematics, Rutgers University, 
Hill Center for the Mathematical Sciences,
110 Frelinghuysen Rd.,
Piscataway, NJ 08854-8019}
\email{filippo.calderoni@rutgers.edu}

\author[A.~Clay]{Adam Clay}

\address{Department of Mathematics, 420 Machray Hall, University of
Manitoba, Winnipeg, MB, R3T 2N2, Canada}
\email{Adam.Clay@umanitoba.ca}

\thanks{The paper was completed during the Thematic Program in Geometric Group Theory held at the Centre de Recherche Math\'ematiques at the Universit\'e de Montr\'eal in 2023. We would like to thank Steve Boyer for valuable comments about Section~5, and Tommy Cai for helping with Proposition~5.3. Calderoni's research was partially supported by the NSF grant DMS--2348819. Clay's research was partially supported by NSERC grant RGPIN-2020-05343.}

 \subjclass[2020]{Primary: 03E15, 06F15, 20F60, 57K18, 57K30. Secondary: 57R58.}

\maketitle

\begin{abstract}
We develop new tools to analyze the complexity of the conjugacy equivalence relation \(E_\mathsf{lo}(G)\), whenever $G$ is a left-orderable group.  Our methods are used to demonstrate non-smoothness of \(E_\mathsf{lo}(G)\) for certain groups $G$ of dynamical origin, such as certain  amalgams constructed from Thompson's group $F$.  We also initiate a systematic analysis of \(E_\mathsf{lo}(\pi_1(M))\), where $M$ is a $3$-manifold.  We prove that if $M$ is not prime, then \(E_\mathsf{lo}(\pi_1(M))\) is a universal countable Borel equivalence relation, and show that in certain cases the complexity of \(E_\mathsf{lo}(\pi_1(M))\) is bounded below by the complexity of the conjugacy equivalence relation arising from the fundamental group of each of the JSJ pieces of $M$.   We also prove that if $M$ is the complement of a nontrivial knot in $S^3$ then \(E_\mathsf{lo}(\pi_1(M))\) is not smooth, and show how determining smoothness of \(E_\mathsf{lo}(\pi_1(M))\) for all knot manifolds $M$ is related to the L-space conjecture.

\end{abstract}


\section{Introduction}

The theory of ordered groups dates back as far as H\"older and Dedekind, however it is only over the last few decades that connections with orderable groups have brought to light a fruitful interplay between algebra,  topology, and dynamics. Recent advances in the theory of left-orderable groups crucially use this interaction. For example, Morris~\cite{Mor06} uses the dynamical properties of amenable groups to prove that every left-orderable amenable group is locally indicable, answering a question of Linnell~\cite{Lin99}.

One of the key tools in applying topological arguments to orderable groups is the compact space \(\LO(G)\) of all left-orderings of $G$, which comes equipped with a natural $G$-action by conjugation.  When $G$ is left-orderable, either $|\LO(G)| = 2^n$ and $G$ is a \emph{Tararin group},  or \(\LO(G)\) has the cardinality of the continuum \cite{Lin11}.  Since Tararin groups and their spaces of orderings are completely understood,  the structure of uncountable spaces of left-orderings is the remaining challenge facing the field. 
An emerging program in this direction is the analysis of uncountable spaces of left-orderings of countable groups by using the tools of descriptive set theory.  The goal is to classify left-orderable groups and their corresponding spaces of left-orderings in a complexity hierarchy using a finer notion of cardinality, namely \emph{Borel cardinality}.

The motivating question for this program was posed in the manuscript of Deroin, Navas, and Rivas~\cite{DerNavRiv}, which asks whether there exists a countable left-orderable group with non-standard quotient space \(\LO(G)/G\).  If the answer to this question were negative, then
recovering algebraic properties of a left-orderable group from the Borel structure of its orbit space would be impossible, as a famous theorem of Kuratowswki states that all uncountable standard Borel spaces are Borel isomorphic.

Fortunately this is not the case.  The main results of Calderoni and Clay~\cite{CalCla22} show that for many left-orderable groups the Borel structure of \(\LO(G)/G\) is more complicated than that of standard Borel spaces.  For example, if \(\LO(G)/G\) is standard  then \(G\) must be locally indicable; thus for any left-orderable group \(G\) that does not admit a Conradian left-order, the space \(\LO(G)/ G\) is not standard. This result fits more broadly in the topic of set theoretic rigidity, where the main question of study is how much information about the given group action is encoded by the Borel cardinality of the corresponding orbit space.

Our methods use the theory of countable Borel equivalence relations, which is of fundamental significance to the study of countable group actions and their corresponding orbit spaces.
The orbit space of a \(G\)-action is standard precisely when the corresponding orbit equivalence relation is smooth.
In particular, \cite{CalCla22} shows that a program to analyze left-orderable groups via the Borel complexity of the corresponding conjugacy actions is possible.

For a countable left-orderable group \(G\) denote by \(E_\mathsf{lo}(G)\) the countable Borel equivalence relation induced by the conjugacy \(G\)-action on \(\LO(G)\).
This article continues the study started in \cite{CalCla22,CalSha}, and determines new classes of countable left-orderable groups \(G\) for which \(E_\mathsf{lo}(G)\) is not smooth, and a new class for which \(E_\mathsf{lo}(G)\) is universal:

\begin{theorem}
\label{universal free products}
    If $G$ and $H$ are countable left-orderable groups, then \(E_\mathsf{lo}(G\ast H)\) is a universal countable Borel equivalence relation.
\end{theorem}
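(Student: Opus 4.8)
The plan is to exhibit a Borel reduction from a known universal countable Borel equivalence relation into $E_{\mathsf{lo}}(G\ast H)$, building the reduction from the dynamics of left-orderings on free products. The natural source is the conjugacy action of $\mathbb{F}_2$ on its subgroups, i.e. $E_\infty$ in the guise of the shift action, or alternatively one of the established universal relations coming from Bernoulli shifts. Concretely, I would first recall the structure of $\LO(G\ast H)$: given a left-ordering $\prec_G$ of $G$, a left-ordering $\prec_H$ of $H$, and suitable ``gluing'' data, one constructs a left-ordering of the free product. The key point is that the free product has enormous flexibility — one can encode an arbitrary bi-infinite binary sequence, or more precisely an element of a free group acting on a tree, into the choice of how positive cones on the two factors interleave along the normal forms.

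The main construction I would carry out is the following. Fix nontrivial elements $g\in G$, $h\in H$, and consider the subgroup $\langle g, h\rangle \cong \mathbb{Z}\ast\mathbb{Z}=\mathbb{F}_2$ inside $G\ast H$ (after passing to left-orderable quotients if needed, but $g,h$ of infinite order suffices, and if $G$ or $H$ is torsion it still contains a copy of $\mathbb{Z}$ since it is left-orderable hence torsion-free). Then I would show that the $\mathbb{F}_2$-conjugacy action on $\LO(\mathbb{F}_2)$ — which is known to induce a universal countable Borel equivalence relation by the results of \cite{CalCla22} or its predecessors — Borel-embeds into the $(G\ast H)$-conjugacy action on $\LO(G\ast H)$. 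The embedding takes a left-ordering $\prec$ of $\mathbb{F}_2=\langle g,h\rangle$ and extends it to a left-ordering of $G\ast H$ in a canonical way: use the Conradian/lexicographic-type extension that orders a normal form word by first looking at its image in $\mathbb{F}_2$ under the retraction killing everything but $g$ and $h$, and breaking ties using fixed reference orderings on $G$ and $H$. One must check this extension map is Borel, injective, and equivariant with respect to the inclusion $\mathbb{F}_2\hookrightarrow G\ast H$, so that $\prec\,E_{\mathsf{lo}}(\mathbb{F}_2)\,\prec'$ iff the extensions are conjugate in $G\ast H$ — the nontrivial direction being that a conjugacy in the big group, when it identifies two such ``product-type'' orderings, must be realized by an element normalizing the picture, hence reducing to a conjugacy in $\mathbb{F}_2$.

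The hard part will be precisely this last equivariance/injectivity-on-classes verification: controlling which elements of $G\ast H$ can conjugate one extended ordering to another, and ruling out ``accidental'' conjugacies that collapse $E_{\mathsf{lo}}(\mathbb{F}_2)$-classes. This is where the combinatorics of free products does the real work — one uses that the extended orderings remember the $\mathbb{F}_2$-ordering as a definable piece of data (e.g. as the restriction of the positive cone to $\langle g,h\rangle$, or via the induced action on the Bass--Serre tree), and that the stabilizer structure forces any conjugating element to lie in (a controlled coset of) $\mathbb{F}_2$. An alternative route that sidesteps some of this, and which I would pursue as a fallback, is to reduce directly from the universal relation realized as the orbit equivalence relation of $\mathbb{F}_2\curvearrowright 2^{\mathbb{F}_2}$ by Bernoulli shift: encode a function $x\colon\mathbb{F}_2\to 2$ into an ordering of $G\ast H$ by using $x$ to decide, at each ``syllable transition'' in a normal form, which of two prescribed local orderings to apply, then show two such orderings are $(G\ast H)$-conjugate iff the corresponding functions are shift-equivalent. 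Either way, once the reduction is in hand, universality of $E_{\mathsf{lo}}(G\ast H)$ follows since it is itself a countable Borel equivalence relation (the $(G\ast H)$-action is Borel on the standard Borel space $\LO(G\ast H)$) and it sits above a universal one.
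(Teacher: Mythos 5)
Your high-level strategy is the same as the paper's: reduce from \(E_\mathsf{lo}(\mathbb{F}_2)\), which is universal by \cite[Theorem 1.2]{CalCla22}, by finding a suitable copy of \(\mathbb{F}_2\) inside \(G\ast H\). But the concrete construction you propose does not work, and the step you yourself flag as ``the hard part'' is exactly where the content of the proof lies, so it cannot be deferred. Two specific problems. First, your extension mechanism is undefined: for a general left-orderable \(G\) there is no ``retraction killing everything but \(g\)'' --- no homomorphism \(G\to\langle g\rangle\) at all (take \(G=\mathbb{Q}\), or \(G\) simple), so the lexicographic extension of an ordering of \(\langle g,h\rangle\) to \(G\ast H\) that you describe does not exist. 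Second, the copy \(C=\langle g,h\rangle\) with \(g\in G\), \(h\in H\) need not be left-relatively convex in \(G\ast H\): if it were convex for some ordering, then intersecting with \(G\) shows \(\langle g\rangle\) would be relatively convex in \(G\), and this fails for every nontrivial \(g\) when, say, \(G=\mathbb{Q}\). So the only available general tool for producing the reduction, namely \cite[Proposition 3.1]{CalCla22} (Proposition \ref{convex lower bound} here), which requires \(C\) relatively convex and \(wCw^{-1}\subseteq C\Rightarrow w\in C\), cannot be applied to your \(C\), and you supply no substitute for it.

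The paper's proof repairs exactly these defects by choosing the free group elsewhere: it takes the kernel \(K\) of the natural surjection \(G\ast H\to G\times H\), which is free with basis the commutators \(x_{g,h}=[g,h]\) (Serre), and sets \(C=\langle x_{s,t},\,x_{s^2,t^2}\rangle\) for fixed nontrivial \(s\in G\), \(t\in H\). Because \(C\) is a free factor of the free group \(K\), it is automatically relatively convex and malnormal in \(K\); and \(K\) is relatively convex in \(G\ast H\) because the quotient \(G\times H\) is left-orderable. That disposes of relative convexity with no hypotheses on \(G\) and \(H\). The remaining condition \(wCw^{-1}\subseteq C\Rightarrow w\in C\) is then verified by writing \(w=kab\) and computing, via the explicit formula \(abx_{g,h}b^{-1}a^{-1}=x_{a,b}x_{ag,b}^{-1}x_{ag,bh}x_{a,bh}^{-1}\), that for \((a,b)\neq(1,1)\) some conjugate of the generating set has nonzero exponent sum in a basis element outside \(S\), hence leaves the normal closure \(\langle\langle S\rangle\rangle_K\). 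If you want to salvage your write-up, you should abandon \(\langle g,h\rangle\) and the retraction idea, and instead locate your \(\mathbb{F}_2\) inside this commutator kernel, where freeness hands you both convexity and the control over conjugation that your proposal leaves unproved.
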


Our refined techniques for determining non-smoothness of \(E_\mathsf{lo}(G)\) rely upon either being able to control the normal subgroups of $G$, or upon being able to construct families of left-orderings of $G$ having prescribed algebraic properties.  Each of these techniques lends itself to an analysis of families of groups arising naturally from recent advances in the study of left-orderable groups.  We show:

\begin{theorem}
\label{simplenotbo}
If $G$ is a countable left-orderable group that admits no finite quotient and is not bi-orderable,  then \(E_\mathsf{lo}(G)\) is not smooth.
\end{theorem}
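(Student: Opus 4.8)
The plan is to locate a minimal closed invariant set for the conjugation action inside $\LO(G)$, show directly that the orbit equivalence relation there is non-smooth, and then transfer this back to $E_{\mathsf{lo}}(G)$; both hypotheses will be used at exactly one place, namely to rule out finite orbits. Recall that for $\le\,\in\LO(G)$ with positive cone $P$, the conjugation stabilizer of $\le$ is $\{g\in G : gPg^{-1}=P\}$, and $\le$ is a bi-ordering precisely when this stabilizer equals $G$. The first step is to show that no $\le\in\LO(G)$ has a finite conjugation orbit: if it did, its stabilizer $S$ would have finite index in $G$, so the normal core $\bigcap_{g\in G}gSg^{-1}$ --- an intersection of finitely many finite-index subgroups --- would be a normal subgroup of finite index in $G$; since $G$ admits no finite quotient this core equals $G$, forcing $S=G$ and hence that $\le$ is a bi-ordering, contrary to hypothesis. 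In particular $G$ is not a Tararin group, so $\LO(G)$ is uncountable, and moreover every conjugation orbit is infinite.

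Next, since $G$ is left-orderable and nontrivial, $\LO(G)$ is nonempty and compact (it is closed in $2^G$), so a Zorn's lemma argument produces a nonempty closed $G$-invariant set $M\subseteq\LO(G)$ on which the conjugation action is minimal. By the first step $M$ is not a finite orbit, hence is infinite, and a minimal flow with an isolated point must be a single finite orbit (the orbit of an isolated point is open, discrete, and has closed invariant complement, so by minimality equals all of $M$, which is then compact and discrete, hence finite); thus $M$ is a perfect Polish space carrying a minimal $G$-action. It then suffices to show that the restriction of $E_{\mathsf{lo}}(G)$ to $M$ is not smooth, and for this I would appeal to the standard fact --- likely already recorded as a lemma in the preliminary sections --- that a minimal action of a countable group on a perfect Polish space is generically ergodic and hence has non-smooth orbit equivalence relation. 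Concretely, a Borel reduction of the restricted relation to equality would provide a Baire-measurable $G$-invariant $h\colon M\to\R$ that separates orbits; $h$ is continuous on a comeager set, hence on a comeager $G$-invariant set $C$, where it is constant on each (dense) orbit and therefore constant on $C$; but then a single orbit would contain the uncountable set $C$, which is absurd since orbits are countable.

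There is no single deep obstacle here --- the proof is short once the two ingredients above are assembled --- but the step demanding the most care is the algebraic one: one must use that the absence of finite \emph{quotients} (rather than of all finite-index subgroups) is exactly what is needed for the normal-core argument, and one must make sure every degenerate possibility that would break the dynamical argument ($\LO(G)$ countable, $M$ a finite orbit, $M$ with an isolated point) is genuinely excluded --- all of which reduce to the single observation that the conjugation action has no finite orbit.
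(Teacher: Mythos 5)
Your proposal is correct and follows essentially the same route as the paper: the paper observes that fixed points of the conjugation action are exactly bi-orderings, then invokes Proposition~\ref{cor : simple} (smoothness plus no fixed point forces a finite orbit of size $>1$, whose stabilizer's normal core yields a nontrivial finite quotient), where the underlying ``smooth implies finite orbit'' fact (Proposition~\ref{prop : fin orbit}) is proved by precisely your minimal-set/Baire-category argument. You have merely inlined the proofs of those two cited propositions rather than quoting them.
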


There is an abundant supply of groups satisfying these properties, arising from recent work in dynamics.  We will show how to use \cite{KimKob20} to create such groups in Section~\ref{sec : simple groups}. 

We also systematically analyze the complexity of \(E_\mathsf{lo}(G)\) whenever $G$ is a left-orderable fundamental groups of a \(3\)-manifold, using both Theorem \ref{universal free products} and new non-smoothness results that rely on constructing closed $G$-invariant subsets of $\LO(G)$. 

Our analysis begins by leveraging the prime decomposition of $M$. We first observe that for a compact, connected \(3\)-manifold \(M\), if \(M\) is not prime, then $E_\mathsf{lo}(\pi_1(M))$ is universal; whereas
if $M$ is prime and reducible, then $E_\mathsf{lo}(\pi_1(M))$ is smooth.  We also show how to use the JSJ decomposition in an analysis of $E_\mathsf{lo}(\pi_1(M))$, showing that it is universal in certain cases involving two JSJ pieces.

Then, for irreducible prime manifolds, we discuss a conjectural connection between
the Borel complexity of \(E_\mathsf{lo}(\pi_1(M))\) and the Heegaard Floer homology of \(M\), via the so called L-space conjecture.  This discussion shows how a combination of tools developed in \cite{BGH} and \cite{BC17} to study the L-space conjecture can be used alongside our non-smoothness results.  We show:

\begin{theorem}
\label{knot_result}
   If $G$ is a non-cyclic knot group, then \(E_\mathsf{lo}(G)\) is not smooth.
\end{theorem}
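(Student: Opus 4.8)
The plan is to realize a known non-smooth countable Borel equivalence relation inside $E_{\mathsf{lo}}(G)$ via a continuous, countable-to-one reduction, using the structure theory of knot groups. Let $K \subset S^3$ be a nontrivial knot with exterior $M = S^3 \setminus N(K)$, so $G = \pi_1(M)$ is a non-cyclic knot group; in particular $G$ is left-orderable (knot groups are locally indicable, since knot exteriors are irreducible with positive first Betti number and aspherical), and $G$ has an infinite cyclic abelianization. The key algebraic fact I would exploit is that $M$ always contains an incompressible torus in a neighborhood of its boundary, or more usefully, the JSJ/geometric decomposition of $M$: either $M$ is hyperbolic, or $M$ is a Seifert fibered space (a torus knot complement), or $M$ has a nontrivial JSJ decomposition. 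In the first case I would invoke a non-smoothness criterion for hyperbolic $3$-manifold groups — these are not bi-orderable in general but more importantly admit many quasimorphisms / have no finite quotients obstruction is false, so instead I would lean on Theorem~\ref{simplenotbo} only where applicable and otherwise on the $\LO$-dynamics directly. The cleanest uniform approach, however, avoids case analysis: I would use that $G$ surjects onto $\Z$ with left-orderable kernel (the commutator subgroup or a fibered piece thereof), and build a closed $G$-invariant subset of $\LO(G)$ on which the conjugation action contains a copy of an action inducing $E_0$ or a universal-type relation.

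Concretely, the main step I would carry out is the following. Since $H_1(M;\Z) \cong \Z$, the group $G$ admits a surjection $\phi \colon G \to \Z$; let $N = \ker\phi$, which is itself left-orderable (as a subgroup of a left-orderable group) and nontrivial, indeed not locally cyclic unless $K$ is trivial. One constructs left-orderings of $G$ by picking a left-ordering $<_N$ of $N$ together with a way of extending across the $\Z$-quotient; the conjugation action of $G$ on such orderings restricts to an action on the space of left-orderings of $N$, twisted by the action of $G/N \cong \Z$ on $N$ by the monodromy. I would then argue that this gives a Borel reduction from $E_{\mathsf{lo}}(N)$ (or an induced action) into $E_{\mathsf{lo}}(G)$, together with a separate argument that the $\Z$-action contributes enough dynamics to preclude smoothness — this is where I would need $K$ nontrivial, to ensure the monodromy is not periodic and the orbit equivalence relation is not hyperfinite-with-a-transversal. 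Alternatively, and perhaps more robustly, I would use the natural peripheral torus: its fundamental group $\Z^2 \hookrightarrow G$ is a non-cyclic abelian subgroup, and left-orderings of $G$ restrict to left-orderings of $\Z^2$, whose space is a Cantor set (the circle of irrational "slopes" together with rational doubled points), and I would show the conjugation action of $G$ on $\LO(G)$ projects onto something with the dynamics of an irrational rotation or a minimal $\Z$-action on the circle, which is the prototypical non-smooth example.

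The honest main obstacle is the second half: producing a closed $G$-invariant subset $X \subseteq \LO(G)$ on which the $G$-action is free (or at least has all orbits of a fixed infinite type) and whose orbit equivalence relation is visibly non-smooth, e.g. by exhibiting a continuous surjection onto $S^1$ equivariant for a dense subgroup acting by rotations, or by a direct ergodicity/genericity argument showing no Borel transversal exists. For torus knots this should follow from the explicit description of $\LO(G)$ for Seifert fibered spaces with the known dynamical behavior of the center; for hyperbolic knots I expect to need the results of \cite{BGH} and \cite{BC17} alluded to in the introduction, which produce enough left-orderings of $\pi_1(M)$ detecting the peripheral structure — these give, for a knot manifold, a family of left-orderings parametrized by boundary slopes, on which $G$ acts with the dynamics of the action on the space of slopes. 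Tying the boundary-slope parametrization to a genuinely non-smooth action (rather than merely an uncountable one) is the crux, and I would handle it by showing the relevant action factors through a minimal, non-trivial flow, hence cannot admit a Borel transversal, which is exactly the failure of smoothness.
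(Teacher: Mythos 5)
Your proposal correctly locates the relevant input---the peripheral subgroup $\Z\oplus\Z$ and the slope-detection results of \cite{BGH}---but it stops exactly at the step that constitutes the proof, and the mechanism you sketch for that step is not the one that works. The detected slope supplied by \cite[Corollary 2.15]{BGH} is the \emph{rational} slope $[\mu^{2g(K)-1}\lambda]$, so the image of the orbit $\{gPg^{-1} : g\in G\}$ under restriction to $H=\langle\mu,\lambda\rangle$ is a set of at most four positive cones of $\Z\oplus\Z$: there is no irrational rotation, no minimal circle action, and no freeness anywhere in the argument. In particular, ``factoring through a minimal nontrivial flow'' is not what rules out smoothness here; worse, a finite image in $\LO(H)$ is on its face compatible with the existence of a finite orbit in $\LO(G)$, which is the only thing the generic-ergodicity criterion (Proposition \ref{prop : fin orbit}) allows you to exclude. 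So even granting everything you cite, your outline does not yet preclude smoothness.

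The missing idea is a Conradian obstruction. Any ordering of $\Z\oplus\Z$ detecting $[(p,q)]$ has $\langle(p,q)\rangle$ convex (Lemma \ref{convex knot lemma}); hence any $Q\in\LO(G)$ whose restriction to $H$ is one of the four cones detecting $[\mu^{2g(K)-1}\lambda]$ satisfies $(\mu^{2g(K)-1}\lambda)^n<\mu$ for all $n\in\Z$. But a Conradian ordering of a knot group is order-compatible with the abelianization $\phi_{ab}\colon\pi(K)\to\Z$, and $\phi_{ab}(\mu^{2g(K)-1}\lambda)=2g(K)-1\neq 0$ because $K$ is nontrivial, so no Conradian ordering admits such bounds (Lemma \ref{conrad extension}). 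Consequently the closure of $\{gPg^{-1} : g\in G\}$ is a closed $G$-invariant subset of $\LO(G)\setminus\CLO(G)$, and since every finite orbit lies in $\CLO(G)$, Proposition \ref{prop : LO minus CO} (via Proposition \ref{restriction trick}) gives non-smoothness. Your alternative routes (the kernel of the abelianization with its monodromy, or a case division by geometric type) are left as sketches and are not developed far enough to evaluate; as written, the proposal identifies the crux but does not resolve it.
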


However we expect that a similar result holds for most knot manifolds, in particular we conjecture that if $M$ is a knot manifold and $E_\mathsf{lo}(\pi_1(M))$ is smooth, then $M$ is a generalized solid torus in the sense of \cite{RR17}.

The manuscript is organized as follows.  In Section \ref{sec: prelims} we cover background material on left-orderable groups and countable Borel equivalence relations, and in Section \ref{sec : free pruduct} we prove Theorem \ref{universal free products}. In  Section \ref{sec: new tricks} we use generic ergodicity to strengthen existing criteria for non-smoothness of \(E_\mathsf{lo}(G)\), prove Theorem \ref{simplenotbo} and discuss applications.  In Section~\ref{sec : 3-manifolds} we address the complexity of \(E_\mathsf{lo}(G)\) for left-orderable fundamental groups \(G\) of \(3\)-manifolds. In Section~\ref{sec : ses} we provide a generalized method of defining Borel reductions from short exact sequences of left-orderable groups, as such constructions are ubiquitous in our work.

\section{Preliminaries}
\label{sec: prelims}
\subsection{Left-orderable groups}

A group \(G\) is \emph{left-orderable} if it admits a strict total ordering \(<\) such that \(g < h\) implies \(fg < fh\) for all \(f, g, h \in G\).

\begin{proposition}
\label{prop : equivalent defs LO}
The following are equivalent:
\begin{enumerate}
\item
\(G\) is left-orderable.
\item
There is \(P\subseteq G\) such that
	\begin{enumerate}
	\item
	\(P \cdot P \subseteq P\);
	\item
	\(P\sqcup P^{-1} = G\setminus \{1_G\}\).
	\end{enumerate}
\item
There is a totally ordered set \((\Upomega, <)\) such that \(G\hookrightarrow Aut(\Upomega,<)\).
\end{enumerate}
\end{proposition}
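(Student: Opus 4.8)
The strategy is the standard cyclic chain of implications $(1)\Rightarrow(2)\Rightarrow(3)\Rightarrow(1)$, with the bulk of the genuine content living in $(1)\Leftrightarrow(2)$ and in $(3)\Rightarrow(1)$ being essentially a matter of transporting structure along an embedding.

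For $(1)\Rightarrow(2)$, given a left-order $<$ on $G$, I would set $P=\{g\in G : 1_G<g\}$, the \emph{positive cone}. Condition (b) is immediate from trichotomy of $<$ together with the observation that $1_G<g$ iff $g^{-1}<1_G$ (obtained by left-multiplying $1_G<g$ by $g^{-1}$). For condition (a), if $g,h\in P$ then $1_G<h$, and left-multiplying by $g$ gives $g<gh$; combining with $1_G<g$ by transitivity yields $1_G<gh$, so $gh\in P$. (I note in passing that (a) as printed says $P\cdot P\subseteq G$, which is trivially true; the intended statement is $P\cdot P\subseteq P$, and that is what the proof establishes.)

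For $(2)\Rightarrow(1)$, given $P$ satisfying (a) and (b), define $g<h$ iff $g^{-1}h\in P$. Totality and irreflexivity come from (b) (noting $g^{-1}h=1_G$ exactly when $g=h$, and $(g^{-1}h)^{-1}=h^{-1}g$); transitivity comes from (a), since $(g^{-1}h)(h^{-1}k)=g^{-1}k$; and left-invariance is built in, because $(fg)^{-1}(fh)=g^{-1}h$ does not depend on $f$. For $(1)\Rightarrow(3)$, I would let $G$ act on itself by left translation; since $<$ is a left-invariant total order, each left translation is an order-automorphism of $(G,<)$, giving a homomorphism $G\to\Aut(G,<)$ which is injective because only the identity fixes $1_G$ (indeed fixes anything). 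So one takes $(\Upomega,<)=(G,<)$.

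Finally $(3)\Rightarrow(1)$: suppose $G\hookrightarrow\Aut(\Upomega,<)$ for some total order. The mild obstacle here — the only place any real care is needed — is that the natural order on $\Aut(\Upomega,<)$ coming from a single point need not be total, so one must order the points. I would well-order $\Upomega$ (using choice) as $\{\omega_\alpha\}_{\alpha<\kappa}$ and declare, for distinct $f,g\in\Aut(\Upomega,<)$, that $f\prec g$ iff $f(\omega_\alpha)<g(\omega_\alpha)$ where $\alpha$ is least with $f(\omega_\alpha)\neq g(\omega_\alpha)$. This lexicographic order is total (by leastness) and left-invariant: if $f(\omega_\alpha)<g(\omega_\alpha)$ and $h(\omega_\beta)=f(\omega_\beta)=g(\omega_\beta)$ for all $\beta<\alpha$, then applying the order-automorphism $h$ preserves the strict inequality and the equalities, so $\alpha$ is still least for the pair $(hf,hg)$ and $hf\prec hg$. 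Restricting $\prec$ to the image of $G$ and pulling back along the embedding yields a left-order on $G$, completing the cycle.
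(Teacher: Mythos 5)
Your proof is correct, and it is the standard argument: the paper states this proposition without proof as background material, so there is nothing to compare against. The cycle $(1)\Rightarrow(2)\Rightarrow(1)$ via positive cones, $(1)\Rightarrow(3)$ via the left regular action on $(G,<)$, and $(3)\Rightarrow(1)$ via the lexicographic order on $\Aut(\Upomega,<)$ induced by a well-ordering of $\Upomega$ all check out, and you are right that condition (a) should read $P\cdot P\subseteq P$ rather than $P\cdot P\subseteq G$.
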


The subset $P$ in $(2)$ above is referred to as a \emph{positive cone}.  It is clear that every left-ordering $<$ of $G$ determines a positive cone $P= \{ g \in G : g> 1\}$.  The identification of left-orderings with their positive cones allows us to define the space of left-orderings as follows.  Equip $\{0,1\}$ with the discrete topology, $\{0,1\}^G$ with the product topology, and set 
\[ \LO(G) = \{ P \subset G : P \mbox{ is a positive cone }\} \subset \{0,1\}^G,
\]
equipped with the subspace topology.  Thus the subbasic open sets in $\LO(G)$ are $U_g = \{ P : g \in P \}$, where $ g \in G\setminus \{ 1 \}$.  One can easily check that $\LO(G)$ is closed, hence compact.  There is a $G$-action by homeomorphisms on $\LO(G)$, given by $g \cdot P = gPg^{-1}$, which carries subbasic open sets to subbasic open sets.

The space $\LO(G)$ admits several distinguished $G$-invariant subspaces.  Recall that an ordering $<$ of a group $G$ is \emph{Conradian} if whenever $g, h \in G$ are both positive, then there exists $n \in \mathbb{N}$ such that $1<g^{-1}hg^n$ \cite{Con59}.  Equivalently, if $g, h \in G$ and $g, h >1$ then $1<g^{-1}hg^2$~\cite[Proposition 3.7]{Nav10}. 
We denote by $\CLO(G) \subseteq \LO(G)$ the (closed) subspace of all Conradian orders, one can check from the definition of Conradian orderings that $\CLO(G)$ is $G$-invariant. 

A group \(G\) is \emph{locally indicable} if for every finitely generated subgroup \(H<G\) there is an onto homomorphism \(h\colon H\to \mathbb{Z}\). As a consequence of the Burns-Hale theorem~(e.g., see~\cite[Theorem 1.50]{ClaRol}) every locally indicable group is left-orderable.  In fact, a group is locally indicable if and only if it admits a Conradian left-order \cite[Theorem 4.1]{Con59}.  

Now let \(G\) be a group equipped with a fixed left-ordering $<$. A subgroup $C$ of $G$ is \emph{convex relative to $<$} if whenever $g, h\in C$ and $f \in G$ with $g<f<h$, then $f \in C$.  A subgroup  \(C\subseteq G\) is \emph{left-relatively
convex} in \(G\) (or \emph{relatively convex} in \(G\) for short) if \(C\) is convex relative to some left ordering of \(G\).   When $C$ is normal in $G$, the quotient $G/C$ is left-orderable if and only if $C$ is left-relatively convex in $G$.

We require two results concerning the relative convexity of certain kinds of subgroups that will be needed in the later sections of this paper.  Recall that a subgroup \(H\) of \(G\) is \emph{isolated} if \(h^n \in H\) implies \(h \in H\). 

\begin{proposition}[{\cite[Lemma~3.2]{Cla12}}]
\label{prop : isolate lrc}
In a bi-orderable group \(G\),  every isolated
abelian subgroup is left-relatively convex.
\end{proposition}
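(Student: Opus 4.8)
The plan is to reduce the statement to constructing a $G$-invariant linear order on the space $G/H$ of left cosets, and to build that order from a bi-ordering of $G$. For the reduction: if $\triangleleft$ is a $G$-invariant linear order on $G/H$ and $>_{H}$ is any left-ordering of $H$ (one exists since $H$, being a subgroup of the bi-orderable, hence torsion-free, group $G$, is torsion-free abelian), then ``$g \prec g'$ iff $gH \triangleleft g'H$, or $gH = g'H$ and $g^{-1}g' >_{H} 1$'' defines a left-ordering of $G$; its cosets of $H$ are left-translates of the convex subgroup $H$, hence convex and pairwise comparable, so $H$ is $\prec$-convex, and conversely $\triangleleft$ is recovered from any left-ordering of $G$ in which $H$ is convex. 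So I must produce what one may call a \emph{relative positive cone}: a subset $Q \subseteq G \setminus H$ with $HQH = Q$, $Q\cdot Q \subseteq Q$ and $Q \sqcup Q^{-1} = G \setminus H$. Here commutativity of $H$ is convenient: $HQH = Q$ forces $Q$ to be a union of $(H,H)$-double cosets, so the task becomes to orient the double cosets $HgH$ ($g \notin H$) coherently.

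Now fix a bi-ordering $<$ of $G$, with (normal) positive cone $P$, and let $\widehat{H} = \{g : h_{1}<g<h_{2}\ \text{for some}\ h_{1},h_{2}\in H\}$ be the convex hull of $H$. One checks that $\widehat{H}$ is a subgroup and is convex, hence normal (convex subgroups of a bi-ordering are normal), with $G/\widehat{H}$ again bi-orderable. For $g\notin\widehat{H}$ the whole double coset $HgH$ lies in $P$ or in $P^{-1}$ according to the sign of the image of $g$ in $G/\widehat{H}$; assigning those double cosets to $Q$ or $Q^{-1}$ accordingly is forced and internally consistent, and reduces everything to the case $G=\widehat{H}$ — so I may assume $H$ is cofinal in $(G,<)$ on both sides, no proper convex subgroup of $G$ contains $H$, and $H$ is still isolated and abelian. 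If $(G,<)$ has no proper nontrivial convex subgroup it is archimedean, hence abelian by H\"older, and the claim follows from the elementary abelian case (an isolated subgroup $H$ of a torsion-free abelian group $A$ has torsion-free, hence orderable, quotient $A/H$, so $H$ is relatively convex, via a lexicographic order). Otherwise $(G,<)$ has proper convex subgroups, and since all jumps of a bi-ordering are abelian the commutator subgroup is not cofinal, so $C:=\widehat{[G,G]}$ is a proper convex normal subgroup with $G/C$ torsion-free abelian; the abelian case applied to the purification of the image of $H$ in $G/C$ orients all double cosets of $G$ lying outside a subgroup $H'$ with $H\le H'<G$, and what remains is a relative positive cone for $(H',H)$, to be obtained by descending the chain of convex subgroups of $<$; in the cases where this descent stalls — typically when $G = C\rtimes H$ — I would restrict the bi-ordering of $G$ to $C$ to get an $H$-invariant left-ordering of $C$ and then extend it over $H$ to make $H$ convex.

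The decisive difficulty is this descent: arranging it so that it is well-founded (controlling, e.g., the increasing-union degeneracies of the convex chain), and, above all, checking that the orientations chosen on the several levels glue to a single relative positive cone, i.e.\ that $Q\cdot Q\subseteq Q$ holds across levels. This is exactly where both hypotheses must be used in tandem: abelianness of $H$ forces a relative positive cone to be a union of double cosets and makes left- and right-$H$-translation interchangeable, while isolatedness of $H$, together with uniqueness of roots in the bi-ordered $G$, is what rules out the torsion obstructions that would otherwise appear when orienting double cosets through abelian quotients.
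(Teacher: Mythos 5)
The paper offers no proof of this proposition; it is imported verbatim from \cite[Lemma~3.2]{Cla12}, so your argument has to stand on its own. Your opening reduction is correct and standard: $H$ is left relatively convex iff $G/H$ carries a $G$-invariant linear order, iff there is a set $Q\subseteq G\setminus H$ with $HQH=Q$, $Q\cdot Q\subseteq Q$ and $Q\sqcup Q^{-1}=G\setminus H$. The split across the convex hull $\widehat{H}$ is also salvageable (for $g\notin\widehat{H}$ the sign of $HgH$ is read off from $P$, and this outer portion of $Q$ is compatible with any relative positive cone constructed inside $\widehat{H}$ --- note this step needs only that $\widehat{H}$ is convex, not that it is normal). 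But from ``Otherwise $(G,<)$ has proper convex subgroups\dots'' onward the text is a plan rather than a proof: you yourself isolate ``the decisive difficulty'' --- well-foundedness of the descent along convex subgroups and the global verification of $Q\cdot Q\subseteq Q$ across levels --- and do not resolve it. Tellingly, the hypothesis that $H$ is \emph{isolated} is never invoked in any step you actually carry out, even though it is indispensable: $2\mathbb{Z}\times 0$ is abelian but not left relatively convex in $\mathbb{Z}^2$, since $(1,0)\in Q$ would force $(2,0)\in Q\cdot Q\subseteq Q$ while $(2,0)\in H$. A proof that never uses a necessary hypothesis cannot be complete.

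Two of the supporting claims are moreover false. First, convex subgroups of a bi-ordering need \emph{not} be normal: conjugation permutes the chain of convex subgroups but need not fix its members. In $G=\mathbb{Z}\wr\mathbb{Z}=\mathbb{Z}^{(\mathbb{Z})}\rtimes\mathbb{Z}$, bi-order the base lexicographically by the sign of the value at the minimum of the support and let the $\mathbb{Z}$-coordinate dominate; then $C_0=\{f:\mathrm{supp}(f)\subseteq[0,\infty)\}$ is a convex, isolated, abelian subgroup carried by the shift to $C_{\pm 1}\neq C_0$. (This also shows your intended quotient $G/\widehat{H}$ need not exist as a group.) Second, $\widehat{[G,G]}$ need not be proper: a bi-ordered group can be the increasing union of its proper convex subgroups, so that no proper convex subgroup contains $[G,G]$ even though every convex jump is abelian --- take $\bigoplus_{i\in\mathbb{N}}N_i$ with each $N_i$ a bi-ordered Heisenberg group and the largest-index coordinate dominating; every proper convex subgroup lies in some $\bigoplus_{i\leq n}N_i$, while $[G,G]=\bigoplus_i[N_i,N_i]$ lies in none. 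Your argument requires a top convex jump, and that is exactly what can fail. So the proposal has a genuine gap at its core, in addition to these two incorrect intermediate assertions.
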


\begin{proposition}[{\cite[Example~1.18]{ADZ18}}]
\label{prop : free lrc}
Suppose $A$ and $B$ are groups, $C \subset A$ a subgroup and $\phi: C \rightarrow B$ an injective homomorphism.  If $C$ and $\phi(C)$ are relatively convex in $A$ and $B$ respectively, then $A$ and $B$ are relatively convex in the free product with amalgamation $A*_{\phi} B$.
\end{proposition}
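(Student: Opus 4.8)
The plan is to reduce the statement to one about invariant linear orders on coset spaces, and then to build those orders from the Bass--Serre tree of the amalgam. First I would record the standard equivalence: for a left-orderable group $K$ and a subgroup $H\le K$, the subgroup $H$ is left relatively convex in $K$ if and only if $K/H$ carries a $K$-invariant linear order. In one direction, a left order of $K$ in which $H$ is convex separates distinct cosets of $H$ into comparable blocks and hence descends to such an order on $K/H$; in the other, given a $K$-invariant linear order $\prec$ on $K/H$ and any left order $<_H$ on $H$, the rule ``$g_1<g_2$ iff $g_1H\prec g_2H$, or $g_1H=g_2H$ and $1<_H g_1^{-1}g_2$'' defines a left order on $K$ making $H$ convex. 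Thus, writing $G=A\ast_\phi B$, it suffices to equip $G/A$ and $G/B$ with $G$-invariant linear orders, and by the symmetry of the hypotheses in the pairs $(A,C)$ and $(B,\phi(C))$ it is enough to treat $G/A$.

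Next I would bring in the Bass--Serre tree $T$ of the splitting: its vertices are $G/A\sqcup G/B$, its edges are $G/C$ (the edge $gC$ joining $gA$ to $gB$), and $G$ acts by left translation, so that $G/A$ is exactly the set of $A$-type vertices and the edges incident to an $A$-type vertex $v$ are canonically in $G_v$-equivariant bijection with $A/C$ (dually for $B$-type vertices and $B/\phi(C)$). Applying the equivalence above inside $A$, convexity of $C$ in $A$ produces an $A$-invariant linear order on $A/C$; transporting it along $G$ --- which is legitimate because two identifications of $E(v)$ with $A/C$ differ by the order-preserving $A$-action --- gives a $G$-invariant assignment of a linear order to the edge-set at each $A$-type vertex, and convexity of $\phi(C)$ in $B$ does the same at the $B$-type vertices.

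The crux, and the step I expect to be the main obstacle, is to glue these local orders into a single $G$-invariant linear order on the $A$-type vertices. The naive recipe --- fix a base vertex and compare two $A$-type vertices using the local order at the point where their geodesics to the base first diverge --- is \emph{not} $G$-invariant, since there is no $G$-fixed base vertex; indeed local linear orders at the vertices of a tree canonically determine only a $G$-invariant \emph{cyclic} order on the space of ends. To upgrade this to a linear order one must use more than the quotients $A/C$ and $B/\phi(C)$: the plan is to exploit the \emph{full} left orders of $A$ and $B$ --- equivalently the induced orders on $C$ and on $\phi(C)$, which correspond under $\phi$ --- to blow up $T$ coherently, replacing each $A$-type vertex by a copy of the ordered set $(A,<_A)$ and each $B$-type vertex by a copy of $(B,<_B)$ and gluing along each edge so that the two $C$-blocks at its ends are identified via $\phi$. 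This coherent blow-up pins the position of every vertex to a non-extremal, interleaved slot and yields the required $G$-invariant linear order on $G/A$ (and, with $A$ and $B$ interchanged, on $G/B$). One should think of this last step as an instance of the general principle that in a $G$-tree whose edge stabilizers are left relatively convex in both adjacent vertex stabilizers every vertex stabilizer is left relatively convex in $G$; the difficulty is entirely in its proof, where one genuinely needs $C$ and $\phi(C)$ convex --- not merely that $A/C$ and $B/\phi(C)$ be orderable --- to turn cyclic data into linear data.
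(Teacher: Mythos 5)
The paper does not prove this proposition at all: it is quoted verbatim from \cite[Example~1.18]{ADZ18}, so there is no internal argument to compare against, only the cited source (whose general theorem on groups acting on trees your sketch is clearly modelled on). Your first two steps are correct and standard: the equivalence between left relative convexity of $H$ in $K$ and the existence of a $K$-invariant linear order on $K/H$ (the backward direction using that $H$ itself is left-orderable, which holds here), and the identification of $G/A$ with the $A$-type vertices of the Bass--Serre tree together with the well-defined, $G$-equivariant local orders on edge-links coming from the hypotheses on $C$ and $\phi(C)$.

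The gap is that the decisive step --- producing a $G$-invariant \emph{linear} order on $G/A$ from this local data --- is never carried out. You correctly diagnose the obstruction (no $G$-fixed basepoint; local orders naturally yield only cyclic data on ends), but the proposed remedy, the ``coherent blow-up'' of each vertex into an ordered copy of $A$ or $B$ glued along convex $C$-blocks, does not visibly overcome it. Refining each vertex-block internally adds no information about the relative position of two blocks that are far apart in the tree; the same global linearization and transitivity problem reappears one level down. The simplest instance already shows this: for $\mathbb{Z}\ast\mathbb{Z}$ with $C=1$, the blow-up is just the set $G$ partitioned two ways into cosets, each edge gluing a \emph{single point} of an $A$-coset to a single point of a $B$-coset, which determines nothing about where the remainder of one block sits relative to another. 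The assertion that the blow-up ``pins the position of every vertex to a non-extremal, interleaved slot'' is exactly the theorem to be proved, and you concede as much (``the difficulty is entirely in its proof''). As it stands the proposal is a plausible plan that isolates the hard step but does not supply the construction or verify totality, transitivity, and $G$-invariance of the resulting order, so it is not a proof.
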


\subsection{Countable Borel equivalence relations}

Suppose that \(X\) is a set and that \(\mathcal{B}\) is a \(\sigma\)-algebra of subsets
of \(X\). Then \(( X, \mathcal{B})\) is a standard Borel space if there exists a Polish topology \(\tau\) on
\(X\) such that \(\mathcal{B}\) is the \(\sigma\)-algebra generated by \(\tau\).  An equivalence relation \(E\) on the standard Borel space \(X\) is said to be \emph{Borel} if \(E \subseteq X\times X\) is a Borel subset of \(X\times X\). A Borel equivalence relation \(E\) is said to be \emph{countable} if every \(E\)-equivalence class is countable. 

Most of the Borel equivalence relations that we will consider in this paper arise from group actions as follows. Let \(G\) be a countable group. Then a \emph{standard Borel \(G\)-space} is a standard Borel space \(X\) equipped with a Borel action \(G\times X\to X, (g, x) \mapsto g \cdot x\) of \(G\) on \(X\).
For any \(x\in X\), we denote by \(G\cdot x =\{g\cdot x : g\in G\}\) the orbit of \(x\). 
 We denote by \(E^X_G\) the \emph{orbit equivalence relation} on \(X\) whose classes are the \(G\)-orbits.  That is,
\[
x\mathbin{E}^X_Gy\quad \iff\quad G\cdot x = G\cdot y.
\]
\noindent
When \(G\) is a countable group,  then it is clear that \(E_G^X\) is a countable Borel equivalence relation.  Conversely, a theorem of Feldman and Moore~\cite{FelMoo}
states that
if \(E\) is an arbitrary countable Borel equivalence relation on some standard Borel space \(X\), then there exist a countable group \(G\) and a Borel action of \(G\) on \(X\) such that \(E = E_G^X\). 

When \(G\) is a countable left-orderable group, \(X= \LO(G)\), and we let \(G\) act on \(X\) by conjugacy,  we denote the corresponding orbit equivalence relation by \(E_\mathsf{lo}(G)\).  Moreover,  for any countable group \(G\) we denote by \(E_s(G)\) the countable Borel equivalence relation induced by the shift action of \(G\) on \(\{0,1\}^G\).
It is clear from the definition that both \(E_\mathsf{lo}\) and \(E_s(G)\) are examples of countable Borel equivalence relations.

Let \(E, F\) be countable Borel equivalence relations on the standard Borel spaces \(X, Y\) respectively. Then a Borel map \(f \colon X\to Y\) is said to be a \emph{homomorphism} from \(E\) to \(F\)  if for all \(x,y\in X\),
\[x\mathbin{E} y \implies f(x)\mathbin{F} f(y).\]
If \(f\) satisfies the stronger property that for all \(x, y \in X\),
\[x\mathbin{E}y \iff f(x)\mathbin{F}f(y),\]
then \(f\) is said to be a \emph{Borel reduction} from \(E\) to \(F\) and we write \(E \leq_B F\). 
Further we say that
\(E\) and \(F\) are \emph{Borel isomorphic} (in symbols, \(E\cong_B F\)) if
there is a Borel bijection \(f\colon X \to Y\) which is a Borel reduction from \(E\) to \(F\). 

Another way to look at Borel reducibility is to notice that \(E \leq_B F\) is equivalent to the existence of an injection from the quotient space \(X/E\) into \(Y /F\) which is “Borel”, in
the sense that it has a Borel lifting. Thus \(E \leq_B F\) is interpreted as saying
that \(X/E\) has \emph{Borel cardinality} less than or equal to that of \(Y /F\).
Moreover, in case \(E\leq_BF\leq_B E\) we say that \(X/E\) and \(Y/F\) have the same Borel cardinality, in symbols \(|X/E|_B =|X/F|_B \).

\begin{definition}
Let \(E\) be a Borel equivalence relation on the standard Borel space \(X\). We say that \(E\) is \emph{smooth} if there is a standard Borel space \(Y\) and a Borel function \(f\colon X \to Y\) such that \[ x_0\mathbin{E} x_1 \iff f(x_0) = (x_1).\]
\end{definition}

Note that since all uncountable standard Borel spaces are Borel isomorphic we can replace \(Y\) with \(\mathbb{R}\) in the definition of smooth equivalence relations.  Therefore when $X$ is uncountable, \(E\) is smooth if and only if \(|X/E|_B = |\mathbb{R}|_B\).

\begin{fact} 
\label{fact : smooth}
Suppose that \(E, F\) are countable Borel equivalence relations.
\begin{enumerate}
\item
\label{fact: 1}
If \(E\leq_B F\) and \(F\) is smooth, then \(E\) is smooth.
\item
\label{fact: 2}
Suppose that \(E\subseteq F\) and \(F\) is smooth, then \(E\) is smooth.
\end{enumerate}
\end{fact}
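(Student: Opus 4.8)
For part (1) the argument is immediate. Fix a Borel map $h\colon Y\to\R$ witnessing smoothness of $F$, so that $y\mathbin{F}y'\iff h(y)=h(y')$, and let $f\colon X\to Y$ be a Borel reduction of $E$ to $F$. Then $h\circ f\colon X\to\R$ is Borel and $x\mathbin{E}x'\iff f(x)\mathbin{F}f(x')\iff h(f(x))=h(f(x'))$ for all $x,x'\in X$, so $E$ is smooth. For part (2), I would use that a countable Borel equivalence relation is smooth precisely when it admits a Borel selector; so the plan is to manufacture a Borel selector $s_E\colon X\to X$ for $E$ (a Borel map with $s_E(x)\mathbin{E}x$ for all $x$, and $s_E(x)=s_E(x')$ whenever $x\mathbin{E}x'$) and then compose it with a Borel injection $X\hookrightarrow\R$. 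I would carry this out in two steps.

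First, I would use the smoothness of $F$ to produce a Borel selector $s_F$ for $F$. Fix a Borel map $g\colon X\to\R$ with $x\mathbin{F}x'\iff g(x)=g(x')$; each nonempty fibre of $g$ is then a single (countable) $F$-class, so $g$ is countable-to-one, and by the Lusin--Novikov uniformization theorem $X=\bigsqcup_{n}B_n$ is a countable disjoint union of Borel sets on which $g$ is injective. Since $g$ is injective on $B_n$ but constant on $F$-classes, $[x]_F\cap B_n$ has at most one element; setting $n(x)=\min\{n:[x]_F\cap B_n\neq\emptyset\}$ and letting $s_F(x)$ be the unique element of $[x]_F\cap B_{n(x)}$ yields a Borel map with $s_F(x)\mathbin{F}x$ and $s_F(x)=s_F(x')$ whenever $x\mathbin{F}x'$.

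Second, I would invoke Feldman--Moore to write $F=E^X_\Gamma$ for a countable group $\Gamma=\{\gamma_n:n\in\N\}$ acting in Borel fashion on $X$. Then $n\mapsto\gamma_n\cdot s_F(x)$ enumerates $[x]_F$, and because $s_F$ is constant on $F$-classes this enumeration depends only on the class of $x$. As $E\subseteq F$, the set $[x]_E$ is a nonempty subset of $[x]_F$, so $m(x)=\min\{n:x\mathbin{E}\gamma_n\cdot s_F(x)\}$ is well defined; put $s_E(x)=\gamma_{m(x)}\cdot s_F(x)$. The sets $\{x:m(x)=k\}$ are Borel because $E$, $s_F$, and the $\Gamma$-action are, so $s_E$ is Borel; clearly $s_E(x)\mathbin{E}x$; and if $x\mathbin{E}x'$ then $x\mathbin{F}x'$ (using $E\subseteq F$), hence $s_F(x)=s_F(x')$ and $[x]_E=[x']_E$, which forces $m(x)=m(x')$ and so $s_E(x)=s_E(x')$. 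Thus $s_E$ is the desired selector.

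The delicate point --- and the reason the second step is not completely trivial --- is the $E$-invariance of the chosen representative: selecting, say, the $\Gamma$-least element of $[x]_E$ by way of the enumeration $n\mapsto\gamma_n\cdot x$ fails, because that enumeration changes as $x$ ranges over its own class. Anchoring the enumeration instead to the canonical $F$-representative $s_F(x)$, which depends only on the $F$-class, is exactly what repairs this. The genuinely substantive input is the use of smoothness of $F$ to extract the Borel selector $s_F$ (equivalently, a Borel transversal for $F$) from a smoothness witness via Lusin--Novikov; the rest is routine bookkeeping.
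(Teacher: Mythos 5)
The paper records this statement as a \emph{Fact} and supplies no proof of its own, so there is nothing to compare against line by line; I can only assess your argument on its merits, and it is correct. Part (1) is the standard composition of the reduction with a smoothness witness. For part (2) you correctly identify the one genuinely non-trivial point: a Borel transversal (or selector) for $F$ does not restrict to one for $E$, since each $F$-class typically contains many $E$-classes, so one must pick a representative of each $E$-class in a way that is $E$-invariant. Your fix --- extract a Borel selector $s_F$ for $F$ via Lusin--Novikov, then use a Feldman--Moore enumeration $n\mapsto\gamma_n\cdot s_F(x)$ of the $F$-class, which depends only on the class, to select the least index landing in $[x]_E$ --- is the standard argument and all the Borelness checks you sketch go through (the sets $\{x: x\mathbin{E}\gamma_k\cdot s_F(x)\}$ are Borel because $E$ is Borel and $s_F$ and the action are Borel). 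One could streamline slightly by sending $x$ to the pair $(g(x),m(x))\in\mathbb{R}\times\mathbb{N}$ rather than building a selector for $E$ and re-embedding into $\mathbb{R}$, but that is the same idea. No gaps.
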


Notice that Fact~\ref{fact : smooth}\eqref{fact: 1} implies that
whenever \(X\) is a standard Borel \(G\)-space, and \(Y \subseteq X\) is Borel and \(G\)-invariant,
if \(E_G^X\) is smooth then \(E_G^Y\) is also a smooth equivalence relation.

Now we recall some useful characterizations of smoothness.
A \emph{transversal} for \(E\) is a set \(B\subseteq X\) which intersect each \(E\)-class in exactly one point.
A \emph{selector} for \(E\) is a function \(s \colon X \to X\)  such that \({s(x)}\mathbin{E} x\) and \(x\mathbin{E} y\) implies \(s(x) = {s(y)}\).

\begin{proposition}
\label{thm : characterization smooth}
Let \(E\) be a countable Borel equivalence relation on \(X\). The following are equivalent:
\begin{enumerate-(i)}
\item
\label{item : smooth1}
\(E\) is smooth;
\item
\label{item : smooth2}
\(E\) admits a Borel transversal;
\item
\label{item : smooth3}
\(E\) admits a Borel selector.
\end{enumerate-(i)}
\end{proposition}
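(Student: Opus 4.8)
The plan is to establish the implications (iii)$\Rightarrow$(i)$\Rightarrow$(iii) together with (iii)$\Leftrightarrow$(ii), so that all three statements become equivalent; the only step requiring real work is passing from smoothness to the existence of a Borel selector.

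First I would observe that a Borel selector $s\colon X\to X$ immediately witnesses smoothness: one checks that $x\mathbin{E}y\iff s(x)=s(y)$, the forward direction being the defining property of a selector, and the converse because $s(x)=s(y)$ gives $x\mathbin{E}s(x)=s(y)\mathbin{E}y$. Taking $Y=X$ this yields (iii)$\Rightarrow$(i).

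The substantive step is (i)$\Rightarrow$(iii). Using that every uncountable standard Borel space is Borel isomorphic to $\R$, I may assume the smoothness witness is a Borel map $f\colon X\to\R$ with $x\mathbin{E}y\iff f(x)=f(y)$. Since each fibre $f^{-1}(r)$ is an $E$-class, $f$ is countable-to-one, so the Lusin--Novikov uniformization theorem, applied to the Borel set $\{(r,x)\in\R\times X:f(x)=r\}$ whose sections over $\R$ are exactly the fibres of $f$, gives that $f(X)$ is Borel and that there is a Borel right inverse $\tau\colon f(X)\to X$ with $f\circ\tau=\id$. Then $s:=\tau\circ f$ is the desired Borel selector: $f(s(x))=f(x)$ shows $s(x)\mathbin{E}x$, and $x\mathbin{E}y$ forces $f(x)=f(y)$ and hence $s(x)=s(y)$. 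I expect this to be the main obstacle, since a smooth witness only attaches an invariant label to each $E$-class, whereas manufacturing a canonical representative of the class \emph{inside} $X$ is precisely what the uniformization theorem for Borel sets with countable sections provides.

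Finally, for (iii)$\Leftrightarrow$(ii): from a Borel selector $s$, the fixed-point set $T=\{x\in X:s(x)=x\}$ is Borel and is a transversal, since it meets $[x]_E$ at $s(x)$ (using $s\circ s=s$, which follows from the selector property applied to the pair $x\mathbin{E}s(x)$), and two $E$-related fixed points of $s$ must coincide. Conversely, from a Borel transversal $T$ I would invoke the Feldman--Moore theorem~\cite{FelMoo} to write $E=E^X_G$ for a countable group $G=\{g_n:n\in\N\}$ acting in a Borel way, so that $[x]_E=\{g_n\cdot x:n\in\N\}$; then $m(x):=\min\{n:g_n\cdot x\in T\}$ is a well-defined Borel function of $x$ and $s(x):=g_{m(x)}\cdot x$ is a Borel selector, because $s(x)$ is the unique point of $T\cap[x]_E$ and this set depends only on the $E$-class of $x$. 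This closes the loop.
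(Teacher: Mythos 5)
The paper states this proposition without proof, treating it as standard background, so there is no in-paper argument to compare against. Your proof is correct and is the standard one: the implications (iii)$\Rightarrow$(i) and (iii)$\Leftrightarrow$(ii) are handled exactly as one should (in particular, using Feldman--Moore to get a Borel enumeration of each class and then selecting the first enumerated point landing in the transversal), and the substantive step (i)$\Rightarrow$(iii) is correctly reduced to the Lusin--Novikov uniformization theorem by observing that a smoothness witness $f\colon X\to\R$ is countable-to-one, so that $\{(r,x):f(x)=r\}$ admits a Borel uniformizing right inverse $\tau$ and $s=\tau\circ f$ is the desired selector.
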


The following proposition is a crucial consequence of generic ergodicity.
For a proof we refer the reader to \cite{CalCla22}.

	\begin{proposition}[{E.g. ~\cite[Proposition~2.4]{CalCla22}}]
	\label{prop : fin orbit}
	Suppose that \(G\) is a countable group acting by homeomorphisms on a compact Polish space \(X\) such that \(E_G^X\) is smooth. Then there exists a finite orbit.
	\end{proposition}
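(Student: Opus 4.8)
The plan is to restrict the action to a minimal closed invariant subset, where the dynamics becomes topologically transitive, and then to play smoothness against the Baire category theorem. Assume $X\neq\emptyset$. By compactness, the family of nonempty closed $G$-invariant subsets of $X$, ordered by reverse inclusion, satisfies the hypothesis of Zorn's lemma (a chain has nonempty intersection by the finite intersection property), so there is a \emph{minimal} one, say $Y$: a nonempty closed $G$-invariant set with no proper nonempty closed $G$-invariant subset. Equivalently, every $G$-orbit is dense in $Y$, since the closure of an orbit is closed, $G$-invariant and nonempty. As $Y$ is Borel and $G$-invariant with $E_G^Y = E_G^X\restriction Y$, Fact~\ref{fact : smooth} ensures $E_G^Y$ is still smooth.

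Next I would run a dichotomy. If $Y$ is finite, then $Y$ is a union of finite $G$-orbits and we are done. If $Y$ is infinite, then $Y$ has no isolated points: if $y\in Y$ were isolated, then $\{gy\}=g\{y\}$ would be open in $Y$ for every $g\in G$, so $G\cdot y$ would be open, its complement closed and $G$-invariant, hence empty by minimality; then $Y = G\cdot y$ would be discrete and compact, so finite, a contradiction. Thus $Y$ is a nonempty perfect Polish space carrying a minimal $G$-action, and it suffices to derive a contradiction in this case.

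The core step is generic ergodicity: every $G$-invariant Borel set $A\subseteq Y$ is meager or comeager in $Y$. Using the Baire property, write $A = U\triangle M$ with $U$ open and $M$ meager. If $U=\emptyset$, then $A$ is meager. Otherwise $V:=\bigcup_{g\in G}gU$ is open, $G$-invariant and dense (its closure is closed, $G$-invariant and nonempty, so equals $Y$), hence comeager; and $G$-invariance of $A$ gives $A = gU\triangle gM$ for every $g$, so $A\triangle V\subseteq\bigcup_{g\in G}(A\triangle gU)=\bigcup_{g\in G}gM$, which is meager since $G$ is countable. Therefore $A\triangle V$ is meager and $A$ is comeager.

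Finally I would combine these. Smoothness of $E_G^Y$ yields a countable family $(A_n)_{n\in\N}$ of $G$-invariant Borel subsets of $Y$ that separates the $E_G^Y$-classes, obtained by pulling back, under a Borel reduction of $E_G^Y$ to equality on a standard Borel space, a countable family of Borel sets separating points. By generic ergodicity, for each $n$ one of $A_n$, $Y\setminus A_n$ is comeager; call it $B_n$. Then $C:=\bigcap_{n\in\N}B_n$ is comeager in $Y$, hence uncountable, because in a nonempty perfect Polish space every countable set is meager while $C$ is not. But any two points of $C$ lie in precisely the same sets $A_n$, so they are $E_G^Y$-equivalent; thus $C$ is contained in a single $E_G^Y$-class, contradicting that $E_G^Y$ has only countable classes ($G$ being countable). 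Hence the minimal set $Y$ must be finite, which gives the desired finite orbit of the action on $X$. I expect the one genuinely substantive point to be the generic ergodicity step, together with the observation that smoothness supplies a countable family of \emph{$G$-invariant} Borel sets separating orbits; the reduction to a minimal subsystem and the category bookkeeping are then routine.
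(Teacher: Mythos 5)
Your proof is correct and complete. The paper does not reprove this proposition---it defers to \cite[Proposition~2.4]{CalCla22}---but your route (pass to a minimal closed invariant subset, show it is perfect if infinite, establish generic ergodicity by a Baire-category argument, and contradict smoothness via a countable family of invariant Borel sets separating the countable orbits) is exactly the standard argument behind the cited result, so nothing further is needed.
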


\section{Universality of free products}
\label{sec : free pruduct}

A countable Borel equivalence relation \(E\) is \emph{universal} if every
countable Borel equivalence relation \(F\) is Borel reducible to \(E\).
One of the primary tools from \cite{CalCla22} used to show universality is the following proposition.

\begin{proposition} \cite[Proposition 3.1]{CalCla22}
\label{convex lower bound}
If $C$ is left-relatively convex in $G$, and for all $h \in G$, $hCh^{-1} \subseteq C$ implies $h \in C$, then $E_\mathsf{lo}(C) \leq_B E_\mathsf{lo}(G)$.
\end{proposition}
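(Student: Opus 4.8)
The plan is to construct an explicit Borel reduction $f \colon \LO(C) \to \LO(G)$ by fixing one auxiliary ordering of $G$ that witnesses the convexity of $C$, and using it to extend each left-ordering of $C$ to all of $G$ in a way that keeps $C$ convex. Since $C$ is left-relatively convex, first fix a positive cone $P_* \in \LO(G)$ whose associated left-ordering $<_*$ makes $C$ convex. For $P \in \LO(C)$ I set
\[
f(P) \;=\; P \,\cup\, \bigl(P_* \setminus C\bigr),
\]
so that $g \in f(P)$ iff $g \in P$ when $g \in C$, and $g >_* 1$ when $g \notin C$. Because $f(P)$ agrees with $P$ on $C$ and with the fixed set $P_*$ off $C$, each coordinate $P \mapsto [\,g \in f(P)\,]$ is continuous, so $f$ is continuous and in particular Borel.

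The first task is to check that $f(P)$ is a genuine positive cone. The condition $f(P) \sqcup f(P)^{-1} = G \setminus \{1\}$ is immediate from the corresponding property of $P$ on $C$ and of $P_*$ off $C$. For closure under multiplication I would first record the structural consequence of convexity: for $g \notin C$, exactly one of ``$g >_* c$ for all $c \in C$'' (say $\mathrm{sign}(g)=+$) or ``$g <_* c$ for all $c \in C$'' (say $\mathrm{sign}(g)=-$) holds, that $\mathrm{sign}(g)=+ \iff g>_*1$, and that this sign is invariant under multiplication by $C$ on either side (using left-invariance of $<_*$ together with the fact that $x \notin C$ forces $x^{-1}$ to have the opposite sign). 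Closure then splits into cases by which factors lie in $C$; the only delicate case is two factors $a,b \notin C$ with $\mathrm{sign}(a)=\mathrm{sign}(b)=+$, where $b>_*1$ gives $ab >_* a >_* 1$, so $ab$ lies above all of $C$, whence $ab\notin C$ and $ab\in f(P)$. The same sign dichotomy shows that $C$ is convex for the ordering of $f(P)$, and likewise for $f(Q)$.

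For the forward implication, if $Q = cPc^{-1}$ with $c \in C$ then I would verify directly that $c \cdot f(P) = c f(P) c^{-1} = f(Q)$: on $C$ this is the hypothesis $Q = cPc^{-1}$, and off $C$ it follows from two-sided $C$-invariance of the sign, which gives $\mathrm{sign}(c^{-1} x c) = \mathrm{sign}(x)$ for $x \notin C$. Hence $f(P) \mathbin{E_\mathsf{lo}(G)} f(Q)$.

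The backward implication is the heart of the matter and the step I expect to be the main obstacle, since it is where the hypothesis on $C$ enters. Suppose $f(Q) = g f(P) g^{-1}$ for some $g \in G$. Then conjugation by $g$ is simultaneously a group automorphism and an order-isomorphism from the ordering of $f(P)$ to that of $f(Q)$, so it carries the convex subgroup $C$ of $f(P)$ to the convex subgroup $gCg^{-1}$ of $f(Q)$. As $C$ is itself convex for the ordering of $f(Q)$, both $C$ and $gCg^{-1}$ are convex subgroups of a single left-ordered group; invoking the standard fact that the convex subgroups of a left-ordered group are linearly ordered by inclusion, I get $C \subseteq gCg^{-1}$ or $gCg^{-1} \subseteq C$. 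In the first case $g^{-1}Cg \subseteq C$ forces $g^{-1} \in C$, and in the second $gCg^{-1} \subseteq C$ forces $g \in C$; either way $g \in C$. Intersecting $f(Q) = g f(P) g^{-1}$ with $C$ and using $g \in C$ then yields $Q = gPg^{-1}$, so $P \mathbin{E_\mathsf{lo}(C)} Q$. The crux is precisely this interplay: convexity of $C$ guarantees that both $C$ and its conjugate are convex in the target ordering, and comparability of convex subgroups combined with the hypothesis pins the conjugating element inside $C$.
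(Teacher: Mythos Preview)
The paper does not prove this proposition; it is quoted from \cite[Proposition~3.1]{CalCla22} and used as a black box, so there is no in-paper argument to compare against.

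Your argument is correct and is the natural one. The lexicographic extension $f(P)=P\cup(P_*\setminus C)$ built from a fixed ordering in which $C$ is convex, the verification that $C$ remains convex in every $f(P)$ via the two-sided $C$-invariant ``sign'' on $G\setminus C$, and the backward step---conjugation carries the convex subgroup $C$ to the convex subgroup $gCg^{-1}$, comparability of convex subgroups then yields $gCg^{-1}\subseteq C$ or $g^{-1}Cg\subseteq C$, and the hypothesis forces $g\in C$---are exactly the ingredients one expects. Your case analysis for closure under multiplication is sound (in particular the observation that $ab>_*a>_*c$ for all $c\in C$ rules out $ab\in C$ when both factors have positive sign), and the continuity check for $f$ is routine since each membership predicate $g\in f(P)$ is either constant in $P$ or a subbasic condition on $P$.
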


We will use this, together with the fact that $E_\mathsf{lo}(\mathbb{F}_2)$ is universal \cite[Theorem 1.2]{CalCla22}, to prove the following.

\begin{proof}[Proof of Theorem \ref{universal free products}]
Vinogradov~\cite{Vin49} proved that the free product of left-orderable groups is left-orderable, so $\mathrm{LO}(G*H)$ is nonempty.
Consider the short exact sequence
\[ 1 \rightarrow K \rightarrow G*H \stackrel{\phi}{\rightarrow} G \times H \rightarrow 1,
\]
where $\phi$ is the homomorphism induced by the maps $\phi_G: G \rightarrow G \times H$ and $\phi_H :H \rightarrow G \times H$ given by $\phi_G(g) = (g, 1)$ and $\phi_H(h) = (1, h)$ for all $g \in G$ and $h \in H$.  For ease of exposition, set $x_{g,h} = [g,h] = ghg^{-1}h^{-1}$.  Then $K$ is a free group, with free basis $\{x_{g,h}\}$ where $g \in G \setminus \{1 \}$ and $h \in H \setminus \{1\}$ \cite[Proposition 4]{Ser}.  Observe that for all $g \in G \setminus \{1 \}$ and $h \in H \setminus \{1\}$,  if $b \in H$ then $bx_{g,h}b^{-1} = x^{-1}_{g,b}x^{}_{g,bh}$; and if $a \in G$, then $ax_{g,h}a^{-1} = x^{}_{ag, h}x_{a,h}^{-1}$.  Therefore 
\[
abx_{g,h}b^{-1}a^{-1} = x^{}_{a,b}x_{ag,b}^{-1}x^{}_{ag,bh}x_{a,bh}^{-1}.
\]
Fix nonidentity elements $s \in G$ and $t \in H$, and set $S = \{ x_{s,t}, x_{s^2, t^2}\}$ and choose $(a, b) \in G \times H$.  We first show that $abSb^{-1}a^{-1} \subset \langle \langle S \rangle \rangle_K$ implies $(a,b) = (1,1)$, here $ \langle \langle S \rangle \rangle_K$ is the normal closure in $K$ of $S$.  Note that elements of  $\langle \langle S \rangle \rangle_K$, when written as a reduced word in the generators $\{x_{g,h}\}$, must have zero exponent sum in every generator that does not lie in $S$.  So, we check that $abSb^{-1}a^{-1}$ always contains an element that, when expressed as a reduced word in $\{x_{g,h}\}$, contains a generator that is not in $S$ and which appears with nonzero exponent sum.

We consider cases.

\noindent \textbf{Case 1.} $a=1$.  Then $abx_{s,t}b^{-1}a^{-1} = x_{s,b}^{-1}x^{}_{s,bt}$, note that if $b \neq 1$ then $x_{s,b}$ and $x_{s,bt}$ are distinct, and $x_{s,bt} \notin S$.  Therefore in this case, $abSb^{-1}a^{-1} \subset \langle \langle S \rangle \rangle_K$ is only possible if $(a,b) = (1,1)$.

\noindent \textbf{Case 2.} $b=1$.  Then $abx_{s,t}b^{-1}a^{-1} = x^{}_{as,t}x_{a,t}^{-1}$. As in Case 1, if $a \neq 1$ then $x_{as,t}$ and $x_{a,t}$ are distinct, and $x_{as,t} \notin S$.  We conclude as in Case 1.

\noindent \textbf{Case 3.} $a \neq1$ and $b \neq 1$.  Then 
\[abx_{s,t}b^{-1}a^{-1} = x^{}_{a,b}x_{as,b}^{-1}x^{}_{as,bt}x_{a,bt}^{-1}
\]

\noindent \textbf{Subcase 3a.} $x_{a,b} \notin S$. Then observe that $x_{a,b}$ is distinct from $x_{as,b}$ and from $x_{a,bt}$, and so $x_{a,b}$ provides a generator that is not in $S$ and has nonzero exponent sum in $abx_{s,t}b^{-1}a^{-1}$, demonstrating $abSb^{-1}a^{-1} \not\subset \langle \langle S \rangle \rangle_K$.

\noindent \textbf{Subcase 3b.} $x_{a,b} \in S$.  In this case observe that $x_{as,b} \notin S$ and is distinct from $x_{a,b}$ and $x_{as,bt}$, and thus has nonzero exponent sum in $x_{as,b}$, demonstrating $abSb^{-1}a^{-1} \not\subset \langle \langle S \rangle \rangle_K$.

Now, set $C = \langle S \rangle \subset G*H$.  We verify that $C$ satisfies the required properties so that we may apply Proposition \ref{convex lower bound}.    First, we observe that $C$ is relatively convex in $G*H$, because $C$ is relatively convex in $K$ since it is a free factor of $K$, and $K$ is relatively convex in $G*H$ since $(G*H)/K \cong G \times H$ is left-orderable. 

Next, choose $w \in G*H$ and write $w=kab$ for some $k \in K$ and $a \in G$, $b \in H$.  First, observe that if $(a,b) \neq (1,1)$ then $abSb^{-1}a^{-1} \not\subset \langle \langle S \rangle \rangle_K$ and therefore $abCb^{-1}a^{-1} \not\subset \langle \langle C \rangle \rangle_K = \langle \langle S \rangle \rangle_K$.  Therefore $wCw^{-1} = kabCb^{-1}a^{-1}k^{-1} \not\subset \langle \langle C \rangle \rangle_K $, in particular, $wCw^{-1} \not \subset C$.

So we suppose that $w = k \in K$.  But then $wCw^{-1} \cap C \neq \{1\}$ implies $w \in C$, because $C$ is a free factor in $K$ and is therefore malnormal\footnote{Recall that
a subgroup \(H\) of  \(G\) is \emph{malnormal} if \(gHg^{-1} \cap H = \{1_G\}\) for all \(g \in G\)
with \(g \notin H\).} in $K$.

Thus $C$ satisfies the hypotheses of Proposition \ref{convex lower bound}, and as $C \cong \mathbb{F}_2$, we conclude that $E_\mathsf{lo}(\mathbb{F}_2) \leq_B E_\mathsf{lo}(G*H)$ and so $E_\mathsf{lo}(G*H)$ is universal.
\end{proof}

In contrast, if $G$ is a free product with amalgamation then $E_\mathsf{lo}(G)$ need not be universal --- in fact, it may be smooth.

\begin{example}
\label{Klein}
Consider the group $G=\langle a, b : a^2 = b^2 \rangle$, which is an amalgam of two copies of the integers. It is isomorphic to the  $K = \langle x, y : xyx^{-1} = y^{-1} \rangle$, which is a Tararin group having only four left-orderings. 
  Therefore  \(E_\mathsf{lo}(G)\) is smooth in this case.
\end{example}

\section{New sufficient condition(s) for non-smoothness of \texorpdfstring{$E_\mathsf{lo}(G)$}{EloG}}

\label{sec: new tricks}

The techniques of \cite{CalCla22} developed to show non-smoothness of  \(E_\mathsf{lo}(G)\) are targeted towards answering \cite[Question 2.2.11]{DerNavRiv}.  Here we develop several more general techniques that subsume those results, with the goal of determining when  \(E_\mathsf{lo}(G)\) is smooth for a given group $G$.  Our primary tool is Proposition~\ref{prop : fin orbit}.

Since the conjugacy action is trivial for abelian groups, it is immediate that abelian groups yield examples where  \(E_\mathsf{lo}(G)\) is smooth. However, \cite[Example 2.8]{CalCla22} provided a nonabelian example for which \(E_\mathsf{lo}(G)\) is smooth as well.  The next proposition generalizes that example.

\begin{proposition}
\label{prop : ab-by-finite}
Suppose that $G$ is countable, virtually abelian and left-orderable.  Then $E_\mathsf{lo}(G)$ is smooth.
\end{proposition}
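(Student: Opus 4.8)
The plan is to use Proposition~\ref{prop : fin orbit}: since $G$ acts by homeomorphisms on the compact Polish space $\LO(G)$, it suffices to produce a finite orbit of the conjugacy action, i.e.\ a left-ordering of $G$ whose positive cone has finite orbit under conjugation. Actually, since we want to conclude smoothness (not just non-non-smoothness), I would instead aim directly for a Borel selector or transversal, but the cleanest route is to reduce to the virtually abelian structure and exhibit enough invariance.

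First I would fix a finite-index normal abelian subgroup $A \trianglelefteq G$ (replacing any finite-index abelian subgroup by its normal core, which is still finite-index and abelian). The key structural fact is that a left-ordering of $G$ is determined by its restriction to $A$ together with finitely much extra data, because $A$ is left-relatively convex in $G$: indeed $G/A$ is finite hence has no nontrivial left-ordering, so every convex subgroup of $G$ relative to any left-ordering is contained in... — wait, one must be slightly careful, as $A$ need not be convex for every ordering. The right move: the set of convex subgroups of $(G,<)$ is a chain, and since $[G:A]<\infty$, the subgroup $A$ is comparable to the convex jump structure. More robustly, I would use that $A$ has finite index, so $\LO(G)$ maps to $\LO(A)^{[G:A]}$ (recording the orderings induced on the finitely many cosets, suitably interpreted), and this map has finite fibers and is Borel and $G$-equivariant with respect to the conjugation action on the target.

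The main step is then: the conjugation action of $G$ on $\LO(A)$ factors through the finite group $G/A$ (since $A$ is abelian, $A$ acts trivially by conjugation on $\LO(A)$), so $E_\mathsf{lo}(A)$ viewed with the $G/A$-action has all orbits finite, hence that orbit equivalence relation is smooth. Now $E_\mathsf{lo}(G)$ maps to a smooth equivalence relation (the product/finite-power of this finite-orbit relation on $\LO(A)^{[G:A]}$) via a finite-to-one Borel homomorphism. A finite-to-one Borel homomorphism to a smooth equivalence relation need not in general give smoothness of the source, but here one can upgrade: pull back a Borel transversal and, within each (finite) fiber, use a Borel linear order on $\LO(G)\subseteq\{0,1\}^G$ to pick canonical representatives; since the fibers are finite and everything is Borel, this yields a Borel transversal for $E_\mathsf{lo}(G)$, so $E_\mathsf{lo}(G)$ is smooth by Proposition~\ref{thm : characterization smooth}.

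The hard part will be making the reduction map from $\LO(G)$ to $\LO(A)$-data genuinely $G$-equivariant and finite-to-one in a clean Borel way: one must check that a left-ordering of $G$ is recovered from its restrictions to the finitely many $A$-cosets (equivalently, from the induced ordering on $A$ plus the placement of coset representatives), and that two orderings of $G$ are conjugate iff their associated data are conjugate under $G/A$. Conjugacy in $A$ being trivial is what collapses orbits to finite size; the bookkeeping to see that conjugacy upstairs is detected downstairs up to finite ambiguity is the delicate point, and I would handle it by noting that the $A$-conjugacy action on $\LO(G)$ already has finite orbits (as $A$ is finite-index and abelian, $\langle\langle P\rangle\rangle$-type arguments show $aPa^{-1}$ ranges over finitely many cones), then quotienting by the further finite group $G/A$.
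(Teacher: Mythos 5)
Your overall strategy---show that every conjugation orbit in $\LO(G)$ is finite and then extract a Borel selector---is the same as the paper's, and the final step (using a Borel linear order on $\LO(G)\subseteq\{0,1\}^G$ to pick the $\prec$-least element of each finite class) is fine. But the central claim is left unproved: you assert that the $A$-conjugacy action on $\LO(G)$ has finite orbits, citing ``$\langle\langle P\rangle\rangle$-type arguments,'' and elsewhere you flag as ``the delicate point'' that an ordering of $G$ is recovered from its restriction to $A$---and then never establish either. That is the entire content of the proposition. The missing lemma is: if $[G:A]=n$ and $P,Q\in\LO(G)$ satisfy $P\cap A=Q\cap A$, then $P=Q$. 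Indeed, for $g\in P$ some power $g^{k}$ with $1\le k\le n$ lies in $A$ (two of $1,g,\dots,g^{n}$ share a coset of $A$), so $g^{k}\in P\cap A=Q\cap A\subseteq Q$, which forces $g\in Q$ (otherwise $g^{-1}\in Q$ and hence $g^{-k}\in Q$, a contradiction); thus $P\subseteq Q$ and by symmetry $P=Q$. With this in hand, for $a\in A$ one has $aPa^{-1}\cap A=a(P\cap A)a^{-1}=P\cap A$ since $A$ is abelian, hence $aPa^{-1}=P$: the $A$-action on $\LO(G)$ is actually \emph{trivial}, not merely finite-orbit, and no passage to the normal core is needed.

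A secondary issue: the detour through a finite-to-one Borel homomorphism $\LO(G)\to\LO(A)^{[G:A]}$ is both unnecessary and shaky as written. Pulling back a Borel transversal along a mere homomorphism (rather than a reduction) need not meet every class, so the ``upgrade'' you describe does not go through at the level of generality you state it; you would have to argue fiberwise anyway, which amounts to re-proving that the classes are finite. Once the lemma above is in place you can discard all of that: writing $g=g_k a$ with $a\in A$ and $g_1,\dots,g_n$ coset representatives gives $gPg^{-1}=g_kPg_k^{-1}$, so every orbit equals $\{g_1Pg_1^{-1},\dots,g_nPg_n^{-1}\}$, and smoothness follows from the selector $s(P)=\min_{\prec}\{Q\in\LO(G) : (Q,P)\in E_\mathsf{lo}(G)\}$ via Proposition~\ref{thm : characterization smooth}.
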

\begin{proof} 
Suppose $A \leq G$ is abelian and $|G:A| =n$, we first show that the action of the subgroup $A$ on $\LO(G)$ is trivial. To this end, let $P, Q \in \LO(G)$ and suppose that $P \cap A = Q \cap A$.  Then $g \in P$ implies the existence of some \(1\leq k \leq n\) such that $g^k \in P \cap A$, so that $g^k \in Q$.  But then $g \in Q$, and so $P \subseteq Q$ and therefore $P =Q$.  Using this, we conclude that if $a \in A$ and $P \in \LO(G)$, then as $aPa^{-1} \cap A = a(P \cap A)a^{-1}$ we must have $aPa^{-1} = P$.

Now let $\{g_1, \ldots, g_n\}$ be a complete set of left coset representatives for $A$ in $G$.  Then given $g \in G$, write $g = g_k a$ for some $k \in \{1, \ldots ,n\}$ and $a \in A$, and note that for every $P \in \LO(G)$ we have $gPg^{-1} = g_kaPa^{-1}g_k^{-1} = g_kPg_k^{-1}$.  From this it follows that the orbit of $P$ is $\{g_1Pg_1^{-1}, \ldots, g_nPg_n^{-1}\}$.  As $g \in G$ and $P \in \LO(G)$ were arbitrary, we conclude that  every orbit is finite.

Since \(\LO(G)\) is a Polish space,  it admits a Borel ordering \(\prec\). Therefore
the map \(s\colon \LO(G)\to \LO(G), s(P)= \min_{\prec}\{Q\in\LO(G) : (Q,P)\in E_{\mathsf{lo}}(G)\}\) is a Borel selector for \(E_\mathsf{lo}(G)\) and so $E_\mathsf{lo}(G)$ is smooth. 
\end{proof}

\begin{remark}
To contrast Proposition~\ref{prop : ab-by-finite}, note that the analogue fails for abelian-by-cyclic groups.  Recall that a group \(G\) is \emph{abelian-by-cyclic} if there is a short exact sequence
\[
1 \rightarrow A \stackrel{}{\rightarrow} G \stackrel{}{\rightarrow} Z \rightarrow 1,
\]
where \(A\) is an abelian group and \(Z\) is an infinite cyclic group.  An example of an abelian-by-cyclic group is the (restricted) wreath product \(\mathbb{Z}\wr \mathbb{Z} = \mathbb{Z}^{(\mathbb{Z})}\rtimes \mathbb{Z}  \),
where \(\mathbb{Z}^{(\mathbb{Z})}=\{f\colon \mathbb{Z}\to \mathbb{Z} \mid \mathrm{supp}(f)\text{ is finite}\}\) and \(\mathbb{Z}\) acts on \(\mathbb{Z}^{(\mathbb{Z})}\) by shift. Since \(\mathbb{Z}\) is bi-orderable,  one can prove that \(E_s(\mathbb{Z}) \leq_B E_\mathsf{lo}(\mathbb{Z}\wr \mathbb{Z})\) arguing as in~\cite[Proposition~4.2]{CalCla22}.  Moreover, \(E_s(\mathbb{Z})\) is invariantly universal for hyperfinite Borel equivalence~\cite[Section~9]{DouJacKec} thus it is not smooth. The following question remains open.
\end{remark}

\begin{question}
\label{Question : hyp}
Is \(E_\mathsf{lo}(\mathbb{Z}\wr\mathbb{Z})\) hyperfinite?
\end{question}

Since \(\mathbb{Z}\wr\mathbb{Z}\) is amenable,
question~\ref{Question : hyp} is a particular instance of the most long-standing open questions in the theory of countable Borel equivalence relations: 

\begin{question}[{Weiss~\cite{Wei84}}]
    If \(G\) is a countable amenable group and \(X\) is a standard
Borel space \(G\)-space, must the orbit equivalence relation
\(E^X_G\) be hyperfinite?
\end{question}

\subsection{Restrictions on subgroups of \texorpdfstring{$G$}{G}}

\label{sec : simple groups}

Suppose that \(X\) is a standard Borel \(G\)-space. For any \(x\in X\),  let \(G_x\coloneqq\{g\in G : g\cdot x = x\}\) be the \emph{stabilizer} of \(x\).

\begin{proposition} 
\label{prop : finite-index subg}
If the orbit \(G\cdot x\) is finite and contains more than one element,  then there is a proper subgroup $N$ with \(N\leq G_x\) with \(N\lhd G\) and \([G: N]<\infty\). 
\end{proposition}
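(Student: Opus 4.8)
The plan is to exploit the fact that a finite orbit gives rise to a natural action of $G$ on a finite set, namely on the orbit $G \cdot x$ itself. First I would observe that the map $g \mapsto (g \cdot y)$ defines an action of $G$ on the finite set $G \cdot x$ (taking $y$ to range over the orbit), and consider the associated permutation representation $\rho \colon G \to \mathrm{Sym}(G \cdot x)$. Since $G \cdot x$ is finite, $\mathrm{Sym}(G \cdot x)$ is a finite group, so $N := \ker \rho$ is a normal subgroup of $G$ of finite index (the index divides $|G\cdot x|!$). By construction $N$ acts trivially on every point of the orbit, so in particular $N \cdot x = x$, i.e. $N \leq G_x$.

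Next I would check that $N$ is proper. Here is where the hypothesis that $G \cdot x$ has more than one element is used: if $N = G$, then $G$ would fix every point of its own orbit, which would force $G \cdot x = \{x\}$, contradicting $|G \cdot x| > 1$. Equivalently, pick $y \in G \cdot x$ with $y \neq x$; then there is $g \in G$ with $g \cdot x = y \neq x$, so $g \notin G_x \supseteq N$, hence $N \neq G$. This gives a proper, finite-index normal subgroup $N \lhd G$ with $N \leq G_x$, as required.

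I do not anticipate a serious obstacle here: the statement is essentially the standard observation that the kernel of a finite permutation representation is a finite-index normal subgroup, specialized to the permutation action of $G$ on one of its finite orbits. The only points requiring a line of care are (i) verifying that $g \mapsto (y \mapsto g \cdot y)$ genuinely lands in $\mathrm{Sym}(G \cdot x)$ — which is immediate since $G$ permutes its orbits — and (ii) making sure the hypothesis $|G \cdot x| > 1$ is invoked precisely to conclude $N \neq G$. In the write-up I would also remark that one may take $N$ to be the pointwise stabilizer $\bigcap_{y \in G \cdot x} G_y = \bigcap_{g \in G} g G_x g^{-1}$, which is the normal core of $G_x$ in $G$, making the normality transparent and the finite-index claim follow from $[G : G_x] = |G \cdot x| < \infty$ together with Poincaré's theorem that a finite intersection of finite-index subgroups has finite index.
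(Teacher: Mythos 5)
Your proof is correct and essentially identical to the paper's: both take $N$ to be the kernel of the permutation representation of $G$ on the finite orbit (equivalently, on the left cosets of $G_x$), i.e.\ the normal core of $G_x$, and use $|G\cdot x|>1$ to see that $N$ is proper. Nothing further is needed.
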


\begin{proof}
Suppose that \(|G\cdot x| = k <\infty\).  Since we can identify the set of left-cosets \(\{gG_x: g\in G\}\) with the finite orbit \(G\cdot x\),  it is clear that \(G_x\leq G\), with \([G:G_x]=k\). 

Then consider the action by left-translation of \(G\) on the set of left-cosets
\(\{gG_x: g\in G\}\). This induces a representation \(\rho\colon G\to S_k\). Set \(N=\ker \rho\).  It follows that \(G/N\) is isomorphic to a nontrivial subgroup of \(S_k\), which is finite.
\end{proof}

\begin{proposition} 
\label{cor : simple}
Suppose \(X\) is compact and \(G\curvearrowright X\) by homeomorphisms. If \(E^X_G\) is smooth and there is no $x \in X$ with $g \cdot x =x$ for all $g \in G$, then there must be a finite orbit and \(G\) has a nontrivial finite quotient.
\end{proposition}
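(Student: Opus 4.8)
The plan is to combine Proposition~\ref{prop : fin orbit} with Proposition~\ref{prop : finite-index subg}. These two results, taken together, almost immediately yield the conclusion, so the "proof" is really just an assembly of already-established facts with one small verification.

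First I would invoke Proposition~\ref{prop : fin orbit}: since $G$ is a countable group acting by homeomorphisms on the compact Polish space $X$ and $E^X_G$ is smooth, there exists a finite orbit $G \cdot x$. Next I would argue that this finite orbit cannot be a singleton. Indeed, if $G \cdot x = \{x\}$, then $g \cdot x = x$ for all $g \in G$, contradicting the hypothesis that no such global fixed point exists. Hence $G \cdot x$ is a finite orbit with more than one element.

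Now I would apply Proposition~\ref{prop : finite-index subg} to this orbit: there is a proper subgroup $N \lhd G$ with $N \leq G_x$ and $[G:N] < \infty$. Since $N$ is proper, $G/N$ is a nontrivial finite group, so $G$ has a nontrivial finite quotient. This completes the argument. The only point requiring (minor) care is checking that the hypothesis "there is no $x \in X$ with $g\cdot x = x$ for all $g \in G$" is exactly what rules out the orbit produced by Proposition~\ref{prop : fin orbit} being trivial — which is immediate from the definitions.

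I do not anticipate any real obstacle here: the statement is essentially a corollary packaging Proposition~\ref{prop : fin orbit} and Proposition~\ref{prop : finite-index subg}, and indeed it is labelled \texttt{cor} in the source. If anything, the only thing to be mildly careful about is ensuring the finite orbit hypothesis in Proposition~\ref{prop : finite-index subg} (namely that the orbit has more than one element) is correctly deduced from the no-global-fixed-point assumption, and that one states the conclusion in both forms the later applications will want (existence of a finite orbit, and existence of a nontrivial finite quotient).
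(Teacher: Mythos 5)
Your proof is correct and follows exactly the paper's own argument: apply Proposition~\ref{prop : fin orbit} to obtain a finite orbit, note the no-global-fixed-point hypothesis forces that orbit to have more than one element, and then invoke Proposition~\ref{prop : finite-index subg} to produce the proper finite-index normal subgroup and hence the nontrivial finite quotient. No further comment is needed.
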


\begin{proof}
Suppose \(E_G^X\) is smooth. By Proposition~\ref{prop : fin orbit} there must be a finite orbit \(G\cdot x \in X/G\), which by assumption contains more than one element. By letting \(N\leq G_x\) as in Proposition~\ref{prop : finite-index subg}, we have \(|G/N| <\infty\) with $G/N$ nontrivial.
\end{proof}

\begin{proof}[Proof of Theorem \ref{simplenotbo}]
Note $P \in \LO(G)$ is the positive cone of a bi-ordering if and only if \(gPg^{-1} = P\) for all \(g\in G\).  To see this, suppose that $<$ is the left-ordering of $G$ associated to $P$.  If $<$ is a bi-ordering, then clearly \(gPg^{-1} = P\) for all $g \in G$.  On the other hand, suppose \(gPg^{-1} = P\) for all $g \in G$ and $f<h$.  Then $f^{-1} h \in P$ and so $g^{-1}f^{-1}hg \in P$ for every $g \in G$, and therefore $fg<hg$.  
With this fact in hand, we apply Proposition~\ref{cor : simple}.
\end{proof}

Note that in particular, this result implies that if $G$ is simple and not bi-orderable, then \(E_\mathsf{lo}(G)\) is not smooth.

\subsection{Examples: Free products of simple groups with amalgamation}

An abundant supply of left-orderable groups with restrictions on the existence of normal subgroups arises from the recent discovery of finitely generated, left-orderable simple groups~\cite{HydLod}, as well as the investigation of critical regularity and associated smoothness questions concerning actions of left-orderable groups on the real line~\cite{KimKob20}.

Since a finitely generated, left-orderable simple group $G$ can never be locally indicable, the results of \cite{CalCla22} already show that $E_\mathsf{lo}(G)$ is not smooth in this case.  However these results say nothing about the case of countable left-orderable groups admitting no finite quotients which are locally indicable, plenty of which are known to exist.   Below, we construct such a class of groups, and apply Theorem~\ref{simplenotbo}. 

\begin{lemma}
\label{no finite quotients amalgam}
Suppose that $G$ and $H$ are simple groups, and $C$ an infinite cyclic group equipped with injective homomorphisms $\phi_G\colon C \rightarrow G$ and $\phi_H\colon C \rightarrow H$.  Then the free product $G*_CH$ amalgamated via the maps $\phi_G, \phi_H$ admits no finite quotient.
\end{lemma}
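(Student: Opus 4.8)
The plan is to show that every homomorphism from $G*_C H$ onto a finite group is trivial. The one structural fact I would isolate first is that $G$ and $H$ are necessarily infinite: since $C$ is infinite cyclic and $\phi_G$ is injective, $\phi_G(C)$ is an infinite, in particular nontrivial, subgroup of $G$, and similarly $\phi_H(C)$ is nontrivial and infinite in $H$. I would also recall the two defining features of the amalgam that I will use: $G*_C H$ is generated by the images of $G$ and $H$, and these two images meet in the common subgroup $\phi_G(C) = \phi_H(C)$.

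Now let $\pi\colon G*_C H \rightarrow F$ be a surjection onto a finite group $F$; the goal is $F = \{1\}$. Consider the restriction $\pi|_G$. Its kernel is a normal subgroup of $G$, so by simplicity it is either $\{1\}$ or $G$. It cannot be $\{1\}$, since otherwise $\pi|_G$ would embed the infinite group $G$ into the finite group $F$. Hence $\pi|_G$ is trivial; in particular $\pi$ vanishes on $\phi_G(C)$, hence also on $\phi_H(C)$, since these subgroups coincide in $G*_C H$. Thus $\ker(\pi|_H)$ is a normal subgroup of $H$ containing the nontrivial subgroup $\phi_H(C)$, so by simplicity $\ker(\pi|_H) = H$ and $\pi|_H$ is trivial as well. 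Since the images of $G$ and $H$ generate $G*_C H$, the homomorphism $\pi$ is trivial, and therefore $F = \{1\}$.

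I do not expect a genuine obstacle: the argument is just the soft principle that a simple group admits no nontrivial finite quotient unless it is itself finite, applied to each of the two factors and transferred across the amalgamating subgroup. The only step that genuinely uses more than "$G$ and $H$ are simple" is ruling out that $\pi$ is injective on $G$ (and on $H$); this is exactly where the hypothesis that $C$ is infinite cyclic and embeds into each factor enters, forcing $G$ and $H$ to be infinite.
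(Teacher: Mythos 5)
Your proof is correct and follows essentially the same route as the paper's: in both arguments the kernel of the finite quotient is shown to meet each simple factor nontrivially (using that $\phi_G(C)$ and $\phi_H(C)$ are infinite cyclic) and hence to contain each factor by simplicity. The only cosmetic difference is that the paper notes directly that the finite-index kernel intersects $\phi_G(C)\cong\mathbb{Z}$ in an infinite subgroup, whereas you rule out injectivity of $\pi|_G$ by observing that the infinite group $G$ cannot embed in a finite group; both justifications are valid.
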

\begin{proof}
Suppose that $N$ is a finite index normal subgroup of $G*_CH$.  As $C$ is infinite cyclic, $N \cap \phi_G(C)$ is infinite, as is $N \cap \phi_H(C)$.  But then $N \cap G$ is a nontrivial normal subgroup of $G$, and since $G$ is simple, this implies $G \subset N$.  Similarly we conclude that $H \subset N$, so that $N = G*_CH$.
\end{proof}

We also require the following well-known lemma.

\begin{lemma}
\label{not BO lemma}
Suppose that $G$ is a bi-orderable group, and that $g, h \in G$ and $[g^p, h^q] = 1$ for some $p, q \in \mathbb{Z} \setminus \{0 \}$.   Then $[g,h] =1$.
\end{lemma}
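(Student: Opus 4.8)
The plan is to derive the statement from a single order-theoretic fact: \emph{$n$-th roots are unique in a bi-orderable group}. First I would fix a bi-ordering $<$ of $G$ with positive cone $P$; since $P$ is both left- and right-invariant, it is conjugation-invariant, and $a<b$ is equivalent to $ab^{-1}\in P$ and to $b^{-1}a\in P$. Next I would reduce to the case $p,q\ge 1$: a group element commutes with $h^q$ if and only if its inverse does, and $g^{-m}=(g^m)^{-1}$, so $[g^p,h^q]=1$ is equivalent to $[g^{|p|},h^{|q|}]=1$, while the desired conclusion $[g,h]=1$ is unchanged.

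The core sublemma is: if $z,y\in G$ and $z^n=y^n$ for some $n\ge 1$, then $z=y$. To prove it, I would first observe that in any bi-ordered group $a>b$ and $c>d$ imply $ac>bd$ --- indeed $ac>bc$ by right-invariance and $bc>bd$ by left-invariance. Applying this inductively with $a=c$ and $b=d$ shows $z>y\implies z^n>y^n$, and symmetrically $z<y\implies z^n<y^n$; hence $z^n=y^n$ forces $z=y$. From this I would extract the form actually needed: if $x$ commutes with $y^n$ (with $n\ge 1$), then $x$ commutes with $y$, because $z\coloneqq xyx^{-1}$ satisfies $z^n=xy^nx^{-1}=y^n$, whence $z=y$.

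With these in hand the lemma follows in two steps: since $g^p$ commutes with $h^q$, the consequence above (with $x=g^p$, $y=h$, $n=q$) shows $g^p$ commutes with $h$; then $h$ commutes with $g^p$, and the consequence again (with $x=h$, $y=g$, $n=p$) shows $h$ commutes with $g$, i.e. $[g,h]=1$. The only nontrivial point is the uniqueness-of-roots sublemma; everything else is bookkeeping. It is essential that $G$ be bi-orderable rather than merely left-orderable: the inequality $a>b,\,c>d\Rightarrow ac>bd$ uses both invariances, and the conclusion is well known to fail for left-orderable groups --- e.g.\ the Klein bottle group $\langle g,h\mid ghg^{-1}=h^{-1}\rangle$, where $g^2hg^{-2}=h$ gives $[g^2,h]=1$ while $[g,h]=h^{-2}\ne 1$.
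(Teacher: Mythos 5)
Your proof is correct. The paper's own argument is a direct contradiction: assuming $ghg^{-1}<h$ for some bi-ordering, it conjugates repeatedly by $g$ to obtain $g^phg^{-p}<h$, then raises to the $q$-th power to get $g^ph^qg^{-p}<h^q$, contradicting $[g^p,h^q]=1$. You instead factor the argument through the uniqueness-of-roots property of bi-orderable groups ($z^n=y^n$ with $n\ge 1$ implies $z=y$) and apply it twice, first concluding $[g^p,h]=1$ and then $[g,h]=1$. The two routes rest on exactly the same order-theoretic ingredients (conjugation-invariance of a bi-order and monotonicity of powers), so neither is more general, but yours is more modular: the unique-roots statement is a standard, reusable lemma, and your explicit reduction to the case $p,q\ge 1$ is tidier than the paper's ``the other case being similar.'' Your closing observation that bi-orderability (rather than mere left-orderability) is genuinely needed, witnessed by the Klein bottle group where $[g^2,h]=1$ but $[g,h]=h^{-2}\ne 1$, is a useful sanity check that the paper does not record, and it is consistent with the paper's Example~\ref{Klein}.
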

\begin{proof}
Suppose that $g, h \in G$ with $[g^p, h^q] = 1$, and that $[g,h] \neq 1$.  Then $ghg^{-1} \neq h$, we may assume that $ghg^{-1} < h$ for some bi-ordering of $G$.  Assume $p, q>0$, the other cases being similar.  Then $g^p h g^{-p} < g^{p-1} h g^{1-p} < \dots < h$ and therefore $g^p h^q g^{-p} < h^q$, so that $[g^p, h^q] <1$, a contradiction.
\end{proof}

\begin{proposition}
\label{nonbo examples}
Suppose that $G$, $H$ are locally indicable simple groups, and $C = \langle t \rangle $ is an infinite cyclic group.  Choose nonidentity elements $g \in G$ and $h \in H$ and integers $p, q \in \mathbb{Z} \setminus \{0, \pm 1\}$, and define $\phi_G \colon C \rightarrow G$ and $\phi_H \colon C \rightarrow H$ by $\phi_G(t) = g^p$ and $\phi_H(t) = h^q$.  Then $G*_CH$ is locally indicable, not bi-orderable, and admits no finite quotient.
\end{proposition}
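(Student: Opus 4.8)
The plan is to verify the three assertions separately, using Lemma~\ref{no finite quotients amalgam} for the absence of finite quotients, Lemma~\ref{not BO lemma} for the failure of bi-orderability, and a theorem on one-relator products of locally indicable groups for local indicability. To set up, recall that locally indicable groups are torsion-free, so $g^{p}$ and $h^{q}$ have infinite order; hence $\phi_G$ and $\phi_H$ are injective with infinite cyclic images $\langle g^{p}\rangle\le G$ and $\langle h^{q}\rangle\le H$, and $G\ast_C H$ is genuinely an amalgam of the simple groups $G,H$ over an infinite cyclic subgroup. Lemma~\ref{no finite quotients amalgam} then applies verbatim and shows $G\ast_C H$ has no finite quotient.

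For non-bi-orderability, note that the amalgamation identifies $g^{p}$ with $h^{q}$, so these two elements commute in $G\ast_C H$; were the group bi-orderable, Lemma~\ref{not BO lemma} would force $[g,h]=1$. But by the normal form theorem for amalgamated free products the subgroup $\langle g,h\rangle\le G\ast_C H$ is isomorphic to $\langle g\rangle\ast_{\langle g^{p}\rangle=\langle h^{q}\rangle}\langle h\rangle\cong\langle a,b\mid a^{p}=b^{q}\rangle$, which is non-abelian when $|p|,|q|\ge 2$ (it then maps onto the infinite non-abelian group $\mathbb{Z}/p\mathbb{Z}\ast\mathbb{Z}/q\mathbb{Z}$); this contradiction settles that case. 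Alternatively, covering all $p,q$, one may note that $G$ embeds into $G\ast_C H$ while a nontrivial simple group is never bi-orderable: a non-Archimedean bi-ordered group has a proper nontrivial convex, hence normal, subgroup, and an Archimedean one is abelian by H\"older's theorem.

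The substantive part is local indicability, and here I would use the identification $G\ast_C H=(G\ast H)/\langle\langle w\rangle\rangle$ with $w=g^{p}h^{-q}$, which exhibits $G\ast_C H$ as a one-relator product of the locally indicable groups $G$ and $H$. Since $g^{p}\ne 1\ne h^{q}$, the relator $w$ is cyclically reduced of syllable length two, hence is neither conjugate into a factor nor a proper power in $G\ast H$. Brodskii's theorem---that a one-relator product of locally indicable groups whose relator is not a proper power is again locally indicable---then applies directly and completes the proof. The main difficulty is having this theorem at hand; as a self-contained substitute one would instead show that every finitely generated subgroup $K\le G\ast_C H$ is trivial or surjects onto $\mathbb{Z}$, by letting $K$ act on the Bass--Serre tree: an elliptic $K$ lies in a conjugate of $G$ or of $H$ and is therefore locally indicable and done, and a non-elliptic $K$ is the fundamental group of a finite graph of groups with cyclic edge groups and finitely generated locally indicable vertex groups (finitely generated since the edge stabilizers, being subgroups of $\mathbb{Z}$, are), so that the rational Mayer--Vietoris sequence for graphs of groups, combined with the fact that a nontrivial finitely generated locally indicable group has nonzero first rational homology, gives $H_1(K;\mathbb{Q})\ne 0$ and hence $K\twoheadrightarrow\mathbb{Z}$. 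Finally, local indicability yields left-orderability by the Burns--Hale theorem, so $G\ast_C H$ meets the hypotheses of Theorem~\ref{simplenotbo}.
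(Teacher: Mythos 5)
Your proof follows the same skeleton as the paper's, which simply cites Lemmas~\ref{no finite quotients amalgam} and~\ref{not BO lemma} together with \cite[Theorem 9]{KS70}. Two of your three parts are fine: the no-finite-quotient part is identical, and for local indicability you substitute the Brodskii--Howie theorem on one-relator products for the paper's citation of Karrass--Solitar. That substitution is legitimate: $G\ast_C H$ is the one-relator product of $G$ and $H$ with relator $g^p h^{-q}$, which is cyclically reduced of syllable length two (since $G$ and $H$ are torsion-free, so $g^p\neq 1\neq h^q$) and hence neither conjugate into a factor nor a proper power.

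The gap is in the non-bi-orderability argument. Your main line --- $g^p=h^q$ commute in the amalgam, so Lemma~\ref{not BO lemma} would force $[g,h]=1$, contradicting the fact that $\langle g,h\rangle\cong\langle a,b\mid a^p=b^q\rangle$ is non-abelian --- is correct, but as you yourself observe it only applies when $|p|,|q|\ge 2$; if $|p|=1$ or $|q|=1$ then $\langle a,b\mid a^p=b^q\rangle$ is cyclic, $g$ and $h$ genuinely commute in $G\ast_C H$, and the lemma yields no contradiction. The ``alternative covering all $p,q$'' that you offer to close this case is false. The step ``convex, hence normal'' does not hold in bi-ordered groups: conjugation permutes the chain of convex subgroups but need not fix each one (for instance, in $\mathbb{Z}\wr\mathbb{Z}$ with the lexicographic bi-order, the convex subgroups $\{f:\mathrm{supp}(f)\subseteq[k,\infty)\}$ are moved by the shift). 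Accordingly, the conclusion you derive --- that a nontrivial simple group is never bi-orderable --- is simply wrong: $F'$, the commutator subgroup of Thompson's group $F$, is simple and bi-orderable (being a subgroup of the bi-orderable group $F$), and it is precisely the group to which the paper applies this proposition in the example immediately following it. So your argument establishes the statement only for $|p|,|q|\ge 2$. (To be fair, the paper's own one-line proof via Lemma~\ref{not BO lemma} is subject to the same caveat, but your proposed repair does not remove it.)
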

\begin{proof}
That $G*_CH$ admits no finite quotient follows from Lemma \ref{no finite quotients amalgam}. To see that $G$ is not bi-orderable, note that our choice of $p$ and $q$ implies $g \notin \phi_G(C)$ and $h \notin \phi_H(C)$, and therefore $g$ and $h$ do not commute.  However $\phi_H(t) = h^q$ and $\phi_G(t) = g^p$ implies $[g^p, h^q] =1$, so that $G*_CH$ cannot be bi-orderable by Lemma~\ref{not BO lemma}.  It is locally indicable by \cite[Theorem 9]{KS70}.
\end{proof}

Proposition \ref{nonbo examples} therefore yields plenty of examples where we can apply  Corollary \ref{simplenotbo}, provided we have an ample supply of locally indicable, countable simple groups.  As a first example, we consider Thompson's group $F$ and its commutator subgroup.

\begin{example}
Recall that Thompson's group $F$ has presentation 
\[ F = \langle a, b \mid [ab^{-1}, a^{-1}ba] = [ab^{-1}, a^{-2}ba^2] = id \rangle.
\]
It is well-known that this group is bi-orderable, hence it is also locally indicable~\cite{NavRiv}.  The commutator subgroup $F'$ is simple, by an application of the main result of~\cite{Higman54}.  Suppose $C = \langle t \rangle$ is an infinite cyclic group, and define $\phi_1, \phi_2 : C \rightarrow F'$ by $\phi_1(t) = g^p, \phi_2(t) = h^q$, then set $K = F'*_C F'$, with amalgamation via the maps $\phi_i$.  Then $E_\mathsf{lo}(K)$ is not smooth by Proposition~\ref{nonbo examples}.
\end{example}

We can produce many more examples using the dynamical results of \cite{KimKob20} and the group $F$. We first require two preliminary ingredients.  

Recall that Thompson's group $F$ can be realized as a subgroup of $\mathrm{Homeo}_+([0,1])$, namely the subgroup of piecewise-linear maps whose breakpoints have dyadic rational coordinates. This yields a embedding of $\rho\colon F \rightarrow \mathrm{Homeo}_+([0,1])$, which by Ghys-Sergiescu~\cite{GS87} is topologically conjugate to an embedding $\rho_{GS}\colon F \rightarrow \Diff_+^{\infty}([0,1])$.

We also recall that a homeomorphism $f\colon[0,1] \rightarrow [0,1]$ is \emph{piecewise differentiable of class $C^1$}, or \emph{piecewise $C^1$} for short, if there exists a finite subset $S_f \subset [0,1]$ such that $f$ is a $C^1$ diffeomorphism on $ [0,1] \setminus S_f $, and at every point $x \in S_f$ the left and right derivatives of $f$ exist.  The group of all piecewise $C^1$ diffeomorphisms of $[0,1]$ is denoted $\mathrm{P}\Diff^1([0,1])$.  Given a subgroup $H$ of $\mathrm{P}\Diff^1([0,1])$, the \emph{support of $H$}, denoted $\mathrm{supp}(H)$, is the closure of the set
\[\{ x \in [0,1] \mid \exists h \in H \mbox{ such that } h(x) \neq x \}.
\]

The following lemma is well known to experts.

\begin{lemma}
\label{piecewise thurston}
   The group $\mathrm{P}\Diff^1([0,1])$ is locally indicable. 
\end{lemma}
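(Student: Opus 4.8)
The plan is to exhibit a surjective homomorphism from any finitely generated subgroup of $\mathrm{P}\Diff^1([0,1])$ onto $\mathbb{Z}$, using the germ of the action at the endpoints. Concretely, for $f \in \mathrm{P}\Diff^1([0,1])$ one can form the right derivative $f'(0^+)$ and the left derivative $f'(1^-)$, both of which are positive real numbers. First I would check that the two assignments $f \mapsto \log f'(0^+)$ and $f \mapsto \log f'(1^-)$ are group homomorphisms $\mathrm{P}\Diff^1([0,1]) \to (\mathbb{R},+)$: this follows from the chain rule applied at the endpoints, since near $0$ (resp. near $1$) the piecewise-$C^1$ map is a genuine one-sided $C^1$ diffeomorphism and $S_f$ is a finite set not accumulating at the endpoints generically, so $(f\circ h)'(0^+) = f'(0^+)\, h'(0^+)$.

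Next I would fix a finitely generated subgroup $H \leq \mathrm{P}\Diff^1([0,1])$, say $H = \langle f_1,\dots,f_k\rangle$. If either of the two endpoint germ homomorphisms is nontrivial on $H$, then its image is a nontrivial finitely generated subgroup of $(\mathbb{R},+)$, hence a finitely generated torsion-free abelian group, which surjects onto $\mathbb{Z}$; composing gives the required map $H \to \mathbb{Z}$. So the remaining case is when every element of $H$ fixes both endpoint derivatives, i.e. $f'(0^+) = f'(1^-) = 1$ for all $f \in H$. In this case I would use the classical Thurston stability argument: a finitely generated group of germs of $C^1$ diffeomorphisms of $[0,\infty)$ fixing the origin and with trivial derivative there is either trivial at the level of germs or surjects onto $\mathbb{Z}$ — more precisely, if $H$ acts nontrivially near $0$ with all derivatives equal to $1$, then Thurston's lemma produces a nontrivial homomorphism $H \to \mathbb{R}$, again yielding a surjection onto $\mathbb{Z}$. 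If $H$ acts trivially on a neighborhood of $0$ and likewise of $1$, one iterates the argument at the next relevant point of nontriviality (or simply notes that a group acting trivially near both ends still has a well-defined germ at the first point where some element acts nontrivially), and the Thurston stability argument applies there. The only genuinely trivial subgroup is $H = \{1\}$, which is excluded since local indicability is a condition on nontrivial finitely generated subgroups (and $\mathbb{Z}$ itself maps onto $\mathbb{Z}$).

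The main obstacle I anticipate is the bookkeeping in the "trivial germ at the endpoints" case: one must ensure that the Thurston stability mechanism, usually stated for $C^1$ actions, still applies to the piecewise-$C^1$ setting. The key point is that Thurston's argument only uses the behavior of the maps on one side of a single point, where a piecewise-$C^1$ diffeomorphism is honestly one-sided $C^1$ (the breakpoint set $S_f$ is finite, so after passing to a small enough neighborhood on the appropriate side of the relevant point, each of the finitely many generators is $C^1$ there); hence the classical argument transfers verbatim. I would phrase this carefully: pick the infimum $x_0$ of the union of supports of nontrivial elements of $H$ acting near the left end, observe that $H$ fixes $x_0$ and acts on a right-neighborhood of $x_0$ where finitely many generators are each $C^1$, and run Thurston stability there to get a nonzero homomorphism $H \to \mathbb{R}$. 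Since the paper flags this as "well known to experts," a proof at roughly this level of detail — germ homomorphisms plus Thurston stability, with the piecewise-$C^1$ caveat addressed — should suffice.
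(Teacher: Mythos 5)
Your argument is correct and is essentially the paper's: the paper's proof also passes to the group of $C^1$ germs at $p=\inf \mathrm{supp}(H)$, invokes local indicability of that germ group (Thurston stability, citing \cite[Theorem 2.119]{Calegari2007}), and verifies nontriviality of the image of $H$ there by intersecting the finitely many neighbourhoods of $p$ on which the generators would otherwise be trivial --- the same finite-intersection step you describe. Your preliminary case split using the endpoint derivative homomorphisms $f\mapsto \log f'(0^+)$, $f\mapsto \log f'(1^-)$ is harmless but redundant, as it is already absorbed into the statement that the germ group is locally indicable.
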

\begin{proof}
  Suppose that $H \subset   \mathrm{P}\Diff^1([0,1])$ is a finitely generated subgroup, say with generators $\{ h_1, \dots, h_n\}$ and set $p = \inf \mathrm{supp}(H)$.  Since every element of $H$ fixes $p$, there is an inclusion $\phi\colon H \rightarrow \mathrm{Fix}(p) \subset\mathrm{P}\Diff^1([0,1])$. 
  Now the group of $C^1$ germs fixing $p$ can be identified with $G_p = \mathrm{Fix}(p)/N_p$, where 
  \[ N_p = \{ f \in \mathrm{Fix}(p) \mid \exists U \mbox{ open with }p \in U \mbox{ and } f|_U = id \}.
  \]
  So there is a homomorphism $H \rightarrow G_p$ by following $\phi$ with the quotient map $\mathrm{Fix}(p) \rightarrow G_p$, and since $G_p$ is locally indicable by \cite[Theorem 2.119]{Calegari2007}, we need only to check that the image of this homomorphism is nontrivial.

  To this end, suppose that $\phi(H) \subset N_p$, and for each $h_i $ choose an open set $U_i$ with $p \in U_i$ and $h_i |_{U_i} = id.$  Set $U = \bigcap_{i=1}^n U_i$, then $h_i|_U = id$ for all $i$ and $p \in U$, which contradicts $p =\inf \mathrm{supp}(H)$.  This finishes the proof.
\end{proof}

\begin{proposition}
Suppose that $G \subset \mathrm{P}\Diff^1([0,1])$ is a countable subgroup whose support is compactly contained in $[0,1]$, and let $\tilde{G}$ denote the subgroup of $\mathrm{P}\Diff^1([0,1])$ generated by $G \cup \rho_{GS}(F)$. Then $[\tilde{G}, \tilde{G}]$ is simple, countable and locally indicable.
\end{proposition}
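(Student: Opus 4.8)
The plan is to dispose of countability and local indicability immediately, and to obtain simplicity by running, for the action of $\tilde{G}$ on $(0,1)$, a Higman--Epstein type argument like the one that establishes simplicity of $F'$. Countability is clear, since $\tilde{G}$ is generated by the countable set $G\cup\rho_{GS}(F)$, hence so is its subgroup $[\tilde{G},\tilde{G}]$. For local indicability, note that $\rho_{GS}(F)\subseteq\Diff_{+}^{\infty}([0,1])\subseteq\mathrm{P}\Diff^{1}([0,1])$ while $G\subseteq\mathrm{P}\Diff^{1}([0,1])$ by hypothesis, so $\tilde{G}\subseteq\mathrm{P}\Diff^{1}([0,1])$; this group is locally indicable by Lemma~\ref{piecewise thurston}, and local indicability passes to subgroups, so $[\tilde{G},\tilde{G}]$ is locally indicable.

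For simplicity, every element of $\tilde{G}$ fixes $0$ and $1$ and preserves $(0,1)$, so $\tilde{G}$ acts on $(0,1)$; write $\Lambda=[\tilde{G},\tilde{G}]$, which is nontrivial since it contains $\rho_{GS}(F')\neq\{1\}$. It suffices to show that every nontrivial normal subgroup $N\trianglelefteq\Lambda$ equals $\Lambda$. I would use three facts. (a) Since $\rho_{GS}(F)\cong F$, the subgroup $\rho_{GS}(F')=[\rho_{GS}(F),\rho_{GS}(F)]\subseteq\Lambda$ is a copy of $F'$, which is simple by \cite{Higman54}, hence perfect. (b) $F'$ is exactly the set of $f\in F$ whose support is compactly contained in $(0,1)$, and $F$ acts on the dyadic rationals of $(0,1)$ with the property that any finite increasing tuple can be carried to any other such tuple; it follows that every compact subinterval of $(0,1)$ can be carried into any nonempty open subinterval of $(0,1)$ by an element of $\rho_{GS}(F)$, and that for each compact interval $J\subseteq(0,1)$ the group $\rho_{GS}(F')$ contains a copy of $F'$ supported in $J$ and an element $\phi$ with $\phi(J)\cap J=\emptyset$. (c) Because $\mathrm{supp}(G)$ lies in a compact interval $K_{0}\subseteq(0,1)$, every generator of $\tilde{G}$ other than those coming from $\rho_{GS}(F)$ has support compactly contained in $(0,1)$; with (b) this shows that $\Lambda$ is generated by homeomorphisms each supported in a compact subinterval of $(0,1)$.

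Given $N\trianglelefteq\Lambda$ nontrivial, I would pick $n\in N\setminus\{1\}$; since $n$ moves a point there is a compact interval $W\subseteq(0,1)$ with $n(W)\cap W=\emptyset$. A direct commutator computation gives the identity $[\,[\alpha,n],\beta\,]=[\alpha,\beta]$ whenever $\alpha,\beta$ are supported in $W$, and since $[\alpha,n]\in N$ for $\alpha\in\Lambda$ and $N\trianglelefteq\Lambda$, this yields $[\alpha,\beta]\in N$ for all $\alpha,\beta\in\Lambda$ supported in $W$. Taking $\alpha,\beta$ in a copy of $F'$ inside $\rho_{GS}(F')\subseteq\Lambda$ supported in a compact subinterval of $W$ (by (b)), and using that $F'$ is perfect, we get a nontrivial copy of $F'$ inside $N\cap\rho_{GS}(F')$; by simplicity of $\rho_{GS}(F')$, this forces $\rho_{GS}(F')\subseteq N$. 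I would then rerun the commutator trick with the displacing element now taken from $\rho_{GS}(F')\subseteq N$ (such elements displace $K_{0}$, and in fact any compact subinterval of $(0,1)$, by (b)), using (b) once more to conjugate arbitrary compact subintervals into displaceable position, so as to absorb into $N$ all the remaining generators of $\Lambda$ (those involving $G$). This gives $N=\Lambda$, so $\Lambda$ is simple.

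The step I expect to be the main obstacle is this last one: propagating from ``$N$ contains a copy of $F'$'' to ``$N=\Lambda$''. The difficulty is that $N$ is a priori normal only in $\Lambda=[\tilde{G},\tilde{G}]$, not in $\tilde{G}$, so one cannot conjugate $N$ by the extra generators $G$ at will. This is handled exactly as in the Higman--Epstein proof that $F'$ is simple, by ``zooming in'' through $\rho_{GS}(F)$ to a scale finer than $\mathrm{supp}(G)$, where the relevant local groups are honest copies of $F$ and $F'$ (using the self-similarity of $F$ and the fact that every nontrivial normal subgroup of $F$ contains $F'$) so that the classical criterion applies. It is precisely here that the hypothesis ``$\mathrm{supp}(G)$ compactly contained in $(0,1)$'' is indispensable: it is what makes $\Lambda$ fragment into homeomorphisms of compact subintervals of $(0,1)$, and what allows the relevant supports to be pushed off themselves by elements of $\rho_{GS}(F')\subseteq N$.
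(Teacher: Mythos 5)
Your handling of countability and local indicability is exactly what the paper does (Lemma~\ref{piecewise thurston} plus the fact that local indicability passes to subgroups), and the first half of your simplicity argument is sound: the displacement identity $[[\alpha,n],\beta]=[\alpha,\beta]$ for $\alpha,\beta$ supported in $W$ and $n(W)\cap W=\emptyset$ is correct, it only requires $N\trianglelefteq\Lambda$ and $\alpha,\beta\in\Lambda$, and feeding it a copy of $F'$ supported inside $W$ does force $\rho_{GS}(F')\subseteq N$ via simplicity of $F'$. Note, however, that the paper does not prove simplicity at all: it cites \cite[Lemma 6.4]{KimKob20}, so you are attempting to reprove the substantive input from scratch.

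The genuine gap is in your last step. Rerunning the commutator trick with displacing elements from $\rho_{GS}(F')\subseteq N$ yields exactly this: $[\alpha,\beta]\in N$ for all $\alpha,\beta\in\Lambda$ supported in a common compact subinterval of $(0,1)$. Since (as you essentially observe via the germ homomorphism) every element of $\Lambda$ is compactly supported in $(0,1)$, this gives $N\supseteq[\Lambda,\Lambda]$ and $N\supseteq$ the $\Lambda$-conjugates of $\rho_{GS}(F')$ --- and nothing more. To conclude $N=\Lambda$ you must show that $\Lambda$ is \emph{generated} by these elements, i.e.\ essentially that $\Lambda=[\tilde G,\tilde G]$ is perfect; your sketch never addresses this, and ``zooming in to a scale finer than $\mathrm{supp}(G)$'' does not supply it, since the generators of $\Lambda$ involving $G$ (conjugates of $[g_1,g_2]$ and $[g,f]$ with $g_i\in G$, $f\in\rho_{GS}(F)$) are not visibly products of commutators of elements of $\Lambda$ or of conjugates of $\rho_{GS}(F')$. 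This is not a formality: for an abstract group of the form $\mathbb{Z}\ast F$ the commutator subgroup is a free product of copies of $F'$ with an infinite-rank free group, hence has infinite abelianization and is very far from simple. So one must use the dynamically forced relations (e.g.\ $[\phi g\phi^{-1},g']=1$ for $\phi\in\rho_{GS}(F')$ displacing $\mathrm{supp}(G)$) to prove perfectness of $\Lambda$, for instance by expressing $[g_1,g_2]=[[g_1,\phi],[g_2,\phi']]$ with suitably chosen displacing $\phi,\phi'\in\rho_{GS}(F')$ and treating the mixed commutators $[g,f]$ separately. That argument is the actual content of the cited Kim--Koberda lemma, and it is missing from your proof.
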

\begin{proof}
That $[\tilde{G}, \tilde{G}]$  is simple is a consequence of \cite[Lemma 6.4]{KimKob20}, while local indicability follows from Lemma~\ref{piecewise thurston}.
\end{proof}

\begin{question}
    Are there simple, locally indicable groups that are not bi-orderable?
\end{question}

 \subsection{Restrictions on orderings of \texorpdfstring{$G$}{G}}

 One of the key tools from \cite{CalCla22} was the observation that if $P \in \LO(G)$ has a finite orbit, then in fact $P \in \CLO(G)$, the subspace of all Conradian orderings of $G$ (see \cite[Proof of Theorem 1.1]{CalCla22}).  This observation was used to prove that if $G$ is not Conradian left-orderable (i.e. if $\CLO(G)$ is empty), then \(E_\mathsf{lo}(G)\) is not smooth because there cannot be a finite orbit. We can generalize this line of reasoning, arriving at the following:

\begin{proposition}
\label{prop : LO minus CO}
If there exists $X \subset \LO(G) \setminus \CLO(G)$ that is both closed and $G$-invariant, then $E_\mathsf{lo}(G)$ is not smooth.
\end{proposition}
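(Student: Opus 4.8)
The plan is to reduce the problem to Proposition~\ref{prop : fin orbit} by showing that a closed $G$-invariant subset of $\LO(G) \setminus \CLO(G)$ can contain no finite orbit; smoothness of $E_\mathsf{lo}(G)$ would force such an orbit and hence a contradiction. First I would invoke Fact~\ref{fact : smooth}\eqref{fact: 1} (in the form noted after it): since $X$ is Borel (being closed) and $G$-invariant, if $E_\mathsf{lo}(G)$ is smooth then so is $E_G^X$. As $X$ is a closed subspace of the compact Polish space $\LO(G)$ it is itself compact Polish, and $G$ acts on it by homeomorphisms, so Proposition~\ref{prop : fin orbit} yields a finite orbit $G \cdot P \subseteq X$.

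The heart of the argument is then the observation, attributed to \cite{CalCla22}, that any $P \in \LO(G)$ with finite $G$-orbit is Conradian. The key step is to recall why this holds: if $G \cdot P$ is finite, then the stabilizer $G_P = \{g \in G : gPg^{-1} = P\}$ has finite index in $G$, and an element $g$ stabilizes $P$ exactly when the convex subgroup structure of $<_P$ is preserved in a strong sense; more directly, one shows that a left-ordering whose positive cone is fixed by a finite-index subgroup must be Conradian. I would either cite \cite[Proof of Theorem 1.1]{CalCla22} verbatim for this fact, or sketch it: the finite orbit condition means $\langle\langle \text{something}\rangle\rangle$ has finite index; combined with the fact that a left-orderable group acting on its own space of orderings with a finite orbit produces a Conradian ordering via the dynamical realization, one gets $P \in \CLO(G)$.

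Thus $P \in \CLO(G)$, contradicting $P \in X \subseteq \LO(G) \setminus \CLO(G)$. Therefore $E_\mathsf{lo}(G)$ cannot be smooth. The main obstacle, such as it is, lies in the middle step: one must be careful that the implication ``finite orbit $\Rightarrow$ Conradian'' is applied to an element of $X$, which is legitimate since $X \subseteq \LO(G)$ and the $G$-action on $X$ is the restriction of the conjugacy action, so a finite $G$-orbit in $X$ is a finite $G$-orbit in $\LO(G)$. The remaining ingredients — that closed subsets of compact Polish spaces are compact Polish, that Borel $G$-invariant subspaces inherit smoothness, and that the $G$-action on $X$ is by homeomorphisms — are all routine, so the proof is short once the $\CLO$ fact is in place.
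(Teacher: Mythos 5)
Your proposal is correct and follows exactly the paper's argument: restrict smoothness to the closed $G$-invariant set $X$, apply Proposition~\ref{prop : fin orbit} to get a finite orbit, and contradict $X \subseteq \LO(G) \setminus \CLO(G)$ via the fact from \cite[Proof of Theorem 1.1]{CalCla22} that finite orbits lie in $\CLO(G)$. Your parenthetical sketch of \emph{why} finite orbits are Conradian is vague (the ``$\langle\langle \text{something}\rangle\rangle$'' step is not a real argument), but since you also offer to cite the reference verbatim --- which is all the paper itself does --- this does not constitute a gap.
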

\begin{proof}
If $E_\mathsf{lo}(G)$ is smooth and $X \subset \LO(G)$ is $G$-invariant, then $E_G^X$ is smooth.  Thus the action of $G$ on $X$ has a finite orbit by Proposition \ref{prop : fin orbit}.  However this is not possible since all finite orbits lie in $\CLO(G)$, while $X \subset \LO(G) \setminus \CLO(G)$.
\end{proof}

There are natural techniques for producing closed, $G$-invariant subsets $X \subset \LO(G) \setminus \CLO(G)$ as in Proposition \ref{prop : LO minus CO}.  For instance, we can construct such a set of orderings via short exact sequences as follows:

\begin{proposition}
\label{quotient condition}
Suppose that 
\[1 \rightarrow K \stackrel{i}{\rightarrow} G \stackrel{q}{\rightarrow} H \rightarrow 1
\]
is a short exact sequence of groups, and that $G$ and $H$ are left-orderable.   If there exists a closed, $H$-invariant set $X \subset \LO(H) \setminus \CLO(H)$ then $E_\mathsf{lo}(G)$ is not smooth.
\end{proposition}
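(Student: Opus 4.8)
The plan is to pull the given closed, $H$-invariant set $X \subset \LO(H) \setminus \CLO(H)$ back along the quotient map $q\colon G \to H$ to produce a closed, $G$-invariant subset of $\LO(G) \setminus \CLO(G)$, and then invoke Proposition~\ref{prop : LO minus CO}. The key geometric fact is that a left-ordering of $H$ can be lifted to a left-ordering of $G$ by declaring $g \in G$ positive when $q(g)$ is positive in $H$, and, when $q(g) = 1$, deferring to some fixed left-ordering of $K$ (which exists, since $K \leq G$ and $G$ is left-orderable); equivalently, if $P_H \in \LO(H)$ and $P_K \in \LO(K)$ then $q^{-1}(P_H) \cup i(P_K)$ is a positive cone on $G$. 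I would first fix once and for all a positive cone $P_K \in \LO(K)$ and define the lifting map $\ell\colon \LO(H) \to \LO(G)$ by $\ell(P_H) = q^{-1}(P_H) \cup i(P_K)$, and set $\widetilde X = \ell(X)$.

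Next I would verify the three properties needed. \emph{Closedness:} the map $\ell$ is continuous — membership of $g \in \ell(P_H)$ depends only on the value of $P_H$ at the single coordinate $q(g)$ (or is constant, when $q(g)=1$) — and it is injective, so since $\LO(H)$ is compact, $\widetilde X = \ell(X)$ is compact, hence closed in $\LO(G)$. \emph{$G$-invariance:} for $g \in G$ and $P_H \in \LO(H)$, conjugation satisfies $g \cdot \ell(P_H) \cap q^{-1}(H \setminus \{1\}) = q^{-1}\bigl(q(g) \cdot P_H\bigr)$ on the ``$H$-part''; the $K$-part $i(P_K)$ need not be conjugation-invariant, but that is irrelevant — what matters is that the image of $\widetilde X$ under the $G$-action still lies in the set of lifts of orderings in $X$, up to the choice of cone on $K$. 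To make this clean I would instead take $\widetilde X$ to be the \emph{full} preimage $\{P \in \LO(G) : q(P \setminus \{1\}) \subseteq P_H \text{ for some } P_H \in X\}$ — more precisely, $\widetilde X = \{P \in \LO(G) : (P \cap K) \in \LO(K) \text{ and the induced ordering on } G/K \text{ lies in } X\}$; this set is manifestly $G$-invariant (conjugation by $g$ sends the induced ordering on $H$ to $q(g)\cdot(\text{that ordering})$, and $X$ is $H$-invariant), closed (it is cut out by closed conditions on coordinates grouped by $q$-fibers, together with the closed condition $X$), and nonempty (it contains $\ell(X)$). \emph{Avoiding $\CLO(G)$:} if $P \in \widetilde X$ were Conradian, then its image ordering on $H \cong G/K$ would be Conradian — the quotient of a Conradian ordering by a convex normal subgroup is Conradian, and $K$ is convex in any ordering in $\widetilde X$ by construction — contradicting that the induced ordering lies in $X \subseteq \LO(H)\setminus\CLO(H)$.

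I expect the main obstacle to be the last point: checking that Conradian-ness passes to the quotient ordering on $H$, and more basically that the ordering induced on $G/K$ by an element of $\widetilde X$ is well-defined, which requires $K$ to be convex — this is why one must restrict to orderings of $G$ in which $P \cap K$ is a positive cone on $K$ and $K$ is relatively convex. One should recall (as in the excerpt's discussion of relative convexity, and \cite[Proposition 3.7]{Nav10} for the local characterization of Conradian orderings) that if $K \lhd G$ is convex relative to a left-ordering $<$ of $G$, then $<$ descends to a left-ordering of $G/K$, and if $<$ is Conradian then so is the quotient ordering: given $\bar g, \bar h > 1$ in $G/K$ with representatives $g, h > 1$ in $G$, Conradian-ness of $<$ gives $g^{-1}hg^2 > 1$, hence $\bar g^{-1}\bar h \bar g^2 > 1$ in $G/K$. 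Once this is in place, Proposition~\ref{prop : LO minus CO} applied to $\widetilde X$ finishes the proof. The only remaining routine verification is that $\widetilde X$ is Borel/closed and nonempty, which follows from compactness of $\LO(H)$ and $\LO(K)$ as sketched above.
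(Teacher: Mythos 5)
Your proposal is correct and is essentially the paper's proof: your set $\widetilde X$ (the orderings of $G$ in which $K$ is convex and the induced ordering on $H$ lies in $X$) coincides with the paper's $Y=\lex(\LO(K)\times X)$, and both arguments conclude by applying Proposition~\ref{prop : LO minus CO} to this closed, $G$-invariant set. The only differences are cosmetic --- the paper obtains closedness from continuity of the lexicographic map on the compact set $\LO(K)\times X$ and verifies $Y\cap\CLO(G)=\emptyset$ by a direct case analysis rather than by quoting descent of the Conrad property to quotients by convex normal subgroups (where your inline justification should use the ``exists $n$'' form of the definition, since $g^{-1}hg^{2}$ may land in $K$).
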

\begin{proof}
Define a map $\lex\colon{\LO(K)} \times {\LO(H)} \rightarrow \LO(G)$ by setting \[\lex(P_K, P_H) = i(P_K) \cup q^{-1}(P_H).\] Then $\lex$ is a continuous map, which can be checked by verifying that the preimage of a subbasic open set $U^G_g = \{ P \in \LO(G) \mid g \in P\}$ is either $U^K_{i^{-1}(g)} \times \LO(H)$ or $\LO(K) \times U^H_{q(g)}$ depending on whether $g \in i(K)$ or $q(g) \neq 1$.  Here, the superscripts on the subbasic open sets indicate whether we are working with subbasic open sets of $\LO(K), \LO(H)$ or $\LO(G)$.

Set $\lex(\LO(K) \times X) = Y$, and note that $Y$ is a compact set.  We claim that $Y \subset \LO(G) \setminus \CLO(G)$ and that $Y$ is $G$-invariant.

We first show that $\lex(P_K, P_H) \in \CLO(G)$ if and only if both $P_K \in \CLO(G)$ and $P_H \in \CLO(H)$.  To this end, suppose that $\lex(P_K, P_H) = i(P_K) \cup q^{-1}(P_H) \notin \CLO(G)$.  Then there exist $g, h \in lex(P_K, P_H)$ such that $g^{-1}hg^2 \notin \lex(P_K, P_H)$.  There are three possibilities:

\noindent \textbf{Case 1.}  $g \in i(P_K)$ and $h \in q^{-1}(P_H)$.  In this case, $q(g^{-1} h g^2) = q(h) \in P_H$, so that $g^{-1}hg^2 \in \lex(P_K, P_H)$  meaning this case is impossible.  Similarly the case of $h \in i(P_K)$ and $g \in q^{-1}(P_H)$ is impossible, as then $q(g^{-1} h g^2) = q(g) \in P_H$.

\noindent \textbf{Case 2.} $g, h \in i(P_K)$.  Then $g^{-1}hg^2 \notin \lex(P_K, P_H)$ implies $g^{-1}hg^2 \notin i(P_K)$, so that $P_K$ is not the positive cone of a Conradian ordering. 

\noindent \textbf{Case 3.} $g, h \in q^{-1}(P_H)$.   Then $g^{-1}hg^2 \notin \lex(P_K, P_H)$ implies $g^{-1}hg^2 \notin q^{-1}(P_H)$, so that $q(g), q(h) \in P_H$ while $q(g)^{-1}q(h)q(g)^2 \notin P_H$, so that $P_H$ is not the positive cone of a Conradian ordering.

On the other hand, if $P_K$ or $P_H$ is not the positive cone of a Conradian ordering, then it is clear that $\lex(P_K, P_H) \notin \CLO(G)$ via reasoning similar to above.  It follows that $\lex(\LO(K) \times X) = Y \cap \CLO(G) = \emptyset$.

Last we prove that $Y$ is $G$-invariant.  Given $g \in G$ and $P_K \in \LO(K)$, $P_H \in X$, then 
\[g^{-1}\lex(P_K, P_H)g = g^{-1}i(P_K)g \cup g^{-1}q^{-1}(P_H)g.
\]
Note that $g^{-1}i(P_K)g \in \LO(K)$ and that $g^{-1}q^{-1}(P_H)g = q^{-1}(q(g)^{-1}P_Hq(g))$, where $q(g)^{-1}P_Hq(g) \in X$ since $X$ is $H$-invariant.  Thus 
\[
g^{-1}\lex(P_K, P_H)g = \lex(g^{-1}i(P_K)g, q(g)^{-1}P_Hq(g)) \in Y.
\]
This shows that \(Y\) is $G$-invariant, and completes the proof as the statement now follows from Proposition~\ref{prop : LO minus CO}.
\end{proof}

\begin{corollary}
\label{quotient corollary}
Suppose that $G$ is left-orderable and admits a left-orderable quotient that is not locally indicable.  Then $E_\mathsf{lo}(G)$ is not smooth.
\end{corollary}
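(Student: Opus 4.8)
The plan is to derive this immediately from Proposition~\ref{quotient condition}, so the only real task is to produce a closed, $H$-invariant subset $X \subset \LO(H) \setminus \CLO(H)$, where $H$ is the given left-orderable quotient that is not locally indicable. Recall (from the discussion preceding Proposition~\ref{prop : LO minus CO}, citing the proof of \cite[Theorem~1.1]{CalCla22}) that a group admits a Conradian left-order if and only if it is locally indicable, so $H$ being not locally indicable means precisely that $\CLO(H) = \emptyset$. Therefore $X = \LO(H)$ itself is a candidate: it is closed (indeed compact) in $\{0,1\}^H$, it is $H$-invariant under the conjugacy action, it is nonempty because $H$ is left-orderable, and it is contained in $\LO(H) \setminus \CLO(H) = \LO(H)$ since $\CLO(H)$ is empty.

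First I would fix a short exact sequence $1 \to K \to G \to H \to 1$ realizing $H$ as a quotient of $G$, noting that $K = \ker(q)$ is left-orderable as a subgroup of the left-orderable group $G$ (so that $\LO(K) \neq \emptyset$), and that $G$ and $H$ are both left-orderable by hypothesis. Then I would observe that $X = \LO(H)$ satisfies the hypotheses of Proposition~\ref{quotient condition}: closedness and $H$-invariance are standard, and $X \subseteq \LO(H) \setminus \CLO(H)$ holds because $\CLO(H) = \emptyset$, which in turn follows from the fact that $H$ is left-orderable but not locally indicable together with the characterization of local indicability via Conradian left-orderability \cite[Theorem~4.1]{Con59}. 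Applying Proposition~\ref{quotient condition} then yields that $E_\mathsf{lo}(G)$ is not smooth.

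There is no substantive obstacle here; this corollary is essentially a repackaging of Proposition~\ref{quotient condition} in the special case $X = \LO(H)$, with the input "$\CLO(H)$ is empty" supplied by the Conrad--Navas characterization of local indicability. The only point requiring a sentence of care is confirming that $\LO(H) \neq \emptyset$ (which is exactly the hypothesis that $H$ is left-orderable) so that the conclusion is not vacuous, and similarly that the kernel $K$ is left-orderable so that $\lex$ has a nonempty domain — but the latter is automatic since subgroups of left-orderable groups are left-orderable. I would keep the proof to two or three sentences.

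\begin{proof}
Write $H$ for a left-orderable quotient of $G$ that is not locally indicable, and let $1 \to K \to G \stackrel{q}{\to} H \to 1$ be a short exact sequence realizing it; note that $K \leq G$ is left-orderable. Since $H$ is left-orderable but not locally indicable, it admits no Conradian left-order \cite[Theorem~4.1]{Con59}, so $\CLO(H) = \emptyset$ while $\LO(H) \neq \emptyset$. Thus $X \coloneqq \LO(H)$ is a nonempty closed $H$-invariant subset of $\LO(H) \setminus \CLO(H)$, and Proposition~\ref{quotient condition} gives that $E_\mathsf{lo}(G)$ is not smooth.
\end{proof}
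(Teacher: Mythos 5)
Your proof is correct and takes exactly the same route as the paper: the paper's proof is the one-line ``apply Proposition~\ref{quotient condition} with $X = \LO(H)$,'' and you have simply spelled out the (correct) justification that $\CLO(H)=\emptyset$ via the Conradian characterization of local indicability. No issues.
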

\begin{proof}
Denote the left-orderable non-locally indicable quotient by $H$, and apply the Proposition \ref{quotient condition} with $X = \LO(H)$.
\end{proof}

\begin{remark}
Note that when \(G\) is bi-orderable, we can conclude that \(E_\mathsf{lo}(H)\leq_BE_\mathsf{lo}(G)\) by Corollary~\cite[Corollary~3.5]{CalCla22}. 
\end{remark}

\section{The L-space conjecture and \texorpdfstring{$3$}{3}-manifold groups}
\label{sec : 3-manifolds}
Let $M$ be a compact, orientable $3$-manifold.  Over the last decade, orderability of $\pi_1(M)$ has become an active area of study in low-dimensional topology, as this group may be left-orderable (for instance, every knot group is left-orderable since it has infinite cyclic abelianization \cite[Theorem 1.1]{BRW05}) while in other cases the group is not left-orderable, such as when $M$ is a lens space, the Hantzsche-Wendt manifold \cite[Example 1.59]{ClaRol}, or the Weeks manifold \cite[Example 5.11]{ClaRol}.   Whether or not $\pi_1(M)$ is left-orderable is conjecturally related to the Heegaard Floer homology of $M$, and whether or not $M$ supports a co-orientable taut foliation --- a conjecture known as the L-space conjecture (see Conjecture~\ref{Lspace conjecture} below).

The goal of this subsection is to explain how smoothness of $E_\mathsf{lo}(\pi_1(M))$ when $\pi_1(M)$ is left-orderable is connected to the prime and JSJ decompositions of $M$, and observe some of the expected behaviors that are a consequence of the L-space conjecture. 

Recall that a $3$-manifold $M$ is \emph{prime} if \(M\cong M_1 \mathbin{\#}M_2\) implies \(M_1\cong S^3\) or \(M_2\cong S^3\), where $\#$ denotes connect sum.
Every $3$-manifold $M$ admits a \emph{prime decomposition}, that is, it can be expressed uniquely as a connect sum of prime $3$-manifolds 
\[ M \cong M_1 \mathbin{\#} \ldots \mathbin{\#} M_n,
\]
from which we conclude $\pi_1(M) \cong \pi_1(M_1) * \ldots * \pi_1(M_n)$.  

Closely related is the notion of irreducibility:  A $3$-manifold $M$ is \emph{irreducible} if every embedded $2$-sphere in $M$ bounds a $3$-ball.  There are only two $3$-manifolds which are prime but not irreducible, either $S^2 \times S^1$ or the twisted $2$-sphere bundle over $S^1$.  From this we observe that the fundamental group of a prime, reducible manifold is always $\mathbb{Z}$, and so the prime decomposition of a $3$-manifold $M$ above yields 
 \[
 \pi_1(M) \cong \pi_1(M_1) \ast \ldots \ast \pi_1(M_k) \ast \mathbb{Z} \ast \ldots \ast \mathbb{Z}
  \]
where $n-k$ is the number of prime, reducible factors (corresponding to the copies of $\mathbb{Z}$), and $M_1, \ldots, M_k$ are irreducible.  Since the free product of left-orderable groups is left-orderable~\cite{Vin49}, it follows that $\pi_1(M)$ is left-orderable if and only if $\pi_1(M_i)$ is left-orderable for each $i = 1, \ldots, k$.  From this we observe:

\begin{proposition}Suppose that $M$ is a compact, connected orientable $3$-manifold whose fundamental group is left-orderable.  If $M$ is not prime,  then  $E_\mathsf{lo}(\pi_1(M))$ is universal; and if $M$ is prime and reducible, then   $E_\mathsf{lo}(\pi_1(M))$ is smooth.
\end{proposition}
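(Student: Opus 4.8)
The plan is to split into the two cases stated and dispatch each using results already established in the excerpt.

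For the non-prime case, I would first recall the prime decomposition $M \cong M_1 \# \dots \# M_n$ with $n \geq 2$ (since $M$ is not prime), so that $\pi_1(M) \cong \pi_1(M_1) * \dots * \pi_1(M_n)$. Grouping the factors, we may write $\pi_1(M) \cong A * B$ where $A = \pi_1(M_1)$ and $B = \pi_1(M_2) * \dots * \pi_1(M_n)$ are both nontrivial. Since $\pi_1(M)$ is left-orderable, by the discussion preceding the proposition each $\pi_1(M_i)$ is left-orderable, hence so are $A$ and $B$ as free products of left-orderable groups by Vinogradov~\cite{Vin49}. Then Theorem~\ref{universal free products} applied to $A * B$ gives that $E_\mathsf{lo}(\pi_1(M))$ is universal.

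For the prime-and-reducible case, I would invoke the observation from the text that the only prime but reducible $3$-manifolds are $S^2 \times S^1$ and the twisted $2$-sphere bundle over $S^1$, both of which have fundamental group $\mathbb{Z}$. Since $\mathbb{Z}$ is abelian, its conjugacy action on $\LO(\mathbb{Z})$ is trivial; alternatively $\mathbb{Z}$ is virtually abelian and left-orderable, so Proposition~\ref{prop : ab-by-finite} immediately gives that $E_\mathsf{lo}(\pi_1(M)) = E_\mathsf{lo}(\mathbb{Z})$ is smooth. (One can even note directly that $\LO(\mathbb{Z})$ consists of exactly two points.)

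I do not anticipate a genuine obstacle here, as both halves are essentially bookkeeping on top of Theorem~\ref{universal free products} and Proposition~\ref{prop : ab-by-finite}. The only point requiring a small amount of care is the reduction of the non-prime case to a genuine two-factor free product $A * B$ with both factors nontrivial: this is immediate once $n \geq 2$, but it is worth stating explicitly so that Theorem~\ref{universal free products} applies verbatim. It is also worth remarking that in the non-prime case one does not even need each $\pi_1(M_i)$ to be left-orderable for the factors $A,B$ to be nontrivial left-orderable groups, but since we have assumed $\pi_1(M)$ left-orderable this subtlety does not arise.
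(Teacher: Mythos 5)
Your proof is correct and follows essentially the same route as the paper, which simply cites Theorem~\ref{universal free products} together with the preceding observations that the prime decomposition yields a free product decomposition of $\pi_1(M)$ and that a prime reducible $3$-manifold has fundamental group $\mathbb{Z}$. Your extra care in grouping the factors into a nontrivial two-factor free product $A*B$ and in checking left-orderability of the factors is a reasonable (if routine) elaboration of what the paper leaves implicit.
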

\begin{proof}
This follows from Theorem \ref{universal free products} and the observations above.
\end{proof}

We therefore focus on investigating $E_\mathsf{lo}(\pi_1(M))$ in the case that $M$ is an irreducible $3$-manifold.

To do this, we employ a further unique decomposition of $M$ into pieces, namely the JSJ decomposition.  Before introducing this decomposition, we introduce some standard terminology from $3$-manifold topology, following \cite{Hatcher}.   A two-sided surface $S$ in $M$ is \emph{incompressible} if, for each disk $D$ in $M$ with $D \cap S = \partial D$, there is a disk $D' \subset S$ with $\partial D' = \partial D$.  A surface $S$ is called \emph{$\partial$-parallel} if there is an isotopy fixing $\partial S$ carrying $S$ into $\partial M$.  A $3$-manifold $M$ is \emph{atoroidal} if every incompressible torus in $M$ is $\partial$-parallel. A Seifert fibered $3$-manifold is a $3$-manifold which admits a foliation by circles \cite{Eps72}.

The JSJ decomposition is a decomposition of $M$ into pieces as follows: If $M$ is compact, connect, orientable, irreducible
 and the boundary of $M$ (if nonempty) consists of a union of tori, then there exists a unique (up to isotopy) collection $\mathcal{T}$ of disjoint, incompressible embedded tori such that
\[
M \setminus \mathcal{T} = M_1 \cup \dotsb \cup M_n,
\] where each $M_i$ is either Seifert fibered or atoroidal.   This reduces the problem of analyzing $E_\mathsf{lo}(\pi_1(M))$ to the problem of analyzing fundamental groups of graphs of groups with $Z \oplus Z$ edge groups (corresponding to the tori of $\mathcal{T}$), and vertex groups given by the fundamental group of atoroidal $3$-manifolds and Seifert fibred manifolds whose boundary is a union of tori (when $M$ consists of many pieces), or empty, if $M$ consists of a single piece (for background on graphs of groups, see~\cite{Ser}).  

In general, if $G$ is a fundamental group of a graph of groups, then relating $E_\mathsf{lo}(G)$ to the vertex and edge groups is difficult.  However there are sufficient tools available in $3$-manifold topology that we can show:

\begin{theorem}
\label{JSJ example}
Suppose that for $i = 1, 2$, the $3$-manifold $M_i$ is compact, connected, orientable and irreducible, and that $M_i$ is not homeomorphic to $S^1 \times S^1 \times [0,1]$, and that the boundary of each $M_i$ consists of a union of incompressible tori.  Suppose that there exists a choice of boundary torus $T_i \subset \partial M_i$ such that the JSJ piece of $M_i$ containing $T_i$ is not Seifert fibred.
 
Fix a homeomorphism $\phi \colon T_1 \rightarrow T_2$ and set $M = M_1 \cup_{\phi} M_2$.  If $\pi_1(M_i)$ are bi-orderable for $i=1, 2$, then $E_\mathsf{lo}(\pi_1(M_i)) \leq_B E_\mathsf{lo}(\pi_1(M))$ for $i = 1, 2$.
\end{theorem}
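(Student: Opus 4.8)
The plan is to apply Proposition~\ref{convex lower bound} to the subgroup $\pi_1(M_i)$ of $\pi_1(M)$ for each $i\in\{1,2\}$. By van Kampen's theorem applied to $M=M_1\cup_\phi M_2$ along the common torus $T=T_1=T_2$, we have $\pi_1(M)\cong A\ast_C B$, where $A=\pi_1(M_1)$, $B=\pi_1(M_2)$, and $C\cong\mathbb Z^2$ is the amalgamating subgroup, identified by the two peripheral inclusions with $C_1=\pi_1(T_1)\le A$ and $C_2=\pi_1(T_2)\le B$; the inclusions $C_i\hookrightarrow\pi_1(M_i)$ are injective since $T_1$ and $T_2$ are incompressible. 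To conclude $E_\mathsf{lo}(\pi_1(M_i))\le_B E_\mathsf{lo}(\pi_1(M))$ via Proposition~\ref{convex lower bound}, it then suffices to show: (i) $\pi_1(M_i)$ is left-relatively convex in $\pi_1(M)$; and (ii) for every $w\in\pi_1(M)$, $w\,\pi_1(M_i)\,w^{-1}\subseteq\pi_1(M_i)$ implies $w\in\pi_1(M_i)$.

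Both (i) and (ii) will follow once we know that the peripheral subgroup $C_i\cong\mathbb Z^2$ is \emph{isolated} in $\pi_1(M_i)$, that is, $h^n\in C_i$ for some $n\ge 1$ forces $h\in C_i$. Granting isolation, Proposition~\ref{prop : isolate lrc} (which applies because $\pi_1(M_i)$ is bi-orderable) gives that $C_i$ is left-relatively convex in $\pi_1(M_i)$; then Proposition~\ref{prop : free lrc}, applied with $C_1$ convex in $A$ and $\phi(C_1)=C_2$ convex in $B$, shows that $A$ and $B$ are left-relatively convex in $A\ast_C B=\pi_1(M)$, which is (i) (and also shows that $\pi_1(M)$ is left-orderable). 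For (ii) I would pass to the Bass--Serre tree of $A\ast_C B$, on which $\pi_1(M_i)=\operatorname{Stab}(v_i)$ for the appropriate vertex $v_i$: if $w\,\pi_1(M_i)\,w^{-1}\subseteq\pi_1(M_i)$ then $\pi_1(M_i)$ fixes both $v_i$ and $w^{-1}v_i$, so were these two vertices distinct, $\pi_1(M_i)$ would fix an edge at $v_i$ and hence embed in the abelian edge group $C\cong\mathbb Z^2$ --- impossible, since $\pi_1(M_i)$ is non-abelian: it contains $\pi_1(W_i)$, the fundamental group of the non-Seifert-fibred JSJ piece $W_i$ through $T_i$, which is a non-elementary Kleinian group (and the hypothesis $M_i\not\cong S^1\times S^1\times[0,1]$ likewise rules out the abelian case). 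Hence $w\in\pi_1(M_i)$.

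It remains to sketch the isolation of $C_i$ in $\pi_1(M_i)$, which is the geometric heart of the argument. I would work with the Bass--Serre tree $\mathcal T_i$ of the JSJ graph-of-groups decomposition of $M_i$, with base point chosen so that $\pi_1(W_i)=\operatorname{Stab}(v_0)$ genuinely contains $C_i$ as a peripheral subgroup and the JSJ tori adjacent to $W_i$ are the edge groups at $v_0$. Since $W_i$ is not Seifert fibred it is atoroidal, and, being Haken with nonempty incompressible torus boundary, it is finite-volume hyperbolic by geometrization; consequently every nontrivial element of a peripheral subgroup of $\pi_1(W_i)$ is parabolic, any root of a nontrivial parabolic is again parabolic with the same fixed point, and the peripheral subgroups of two distinct boundary tori of $W_i$ intersect trivially. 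Now let $h\in\pi_1(M_i)$ with $h^n\in C_i\setminus\{1\}$. If $h$ acted hyperbolically on $\mathcal T_i$ then so would $h^n$, contradicting $h^n\in\operatorname{Stab}(v_0)$; so $h$ fixes a vertex. If $h$ fixes $v_0$, then $h,h^n\in\pi_1(W_i)$, and since $h^n$ is parabolic, $h$ is parabolic with the same fixed point, so $h\in C_i$. If instead $h$ fixes some vertex $w\ne v_0$, then $h^n$ fixes the segment $[v_0,w]$, hence an edge at $v_0$, hence lies in the peripheral subgroup of $W_i$ associated with a boundary torus $T'\ne T_i$; together with $h^n\in C_i$ this forces $h^n=1$, a contradiction. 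Thus $C_i$ is isolated.

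The main obstacle is precisely this isolation statement: it is where the topological hypotheses enter --- that $M_i\not\cong S^1\times S^1\times[0,1]$ and that the JSJ piece containing $T_i$ is not Seifert fibred --- and where one must reconcile the combinatorics of the JSJ tree with the parabolic geometry of the hyperbolic piece. A careful treatment should justify the choice of base point realizing $C_i\le\pi_1(W_i)=\operatorname{Stab}(v_0)$ as a peripheral subgroup with the JSJ tori as incident edge groups, and should confirm that distinct boundary tori of the hyperbolic manifold $W_i$ genuinely correspond to distinct cusps, so that the relevant peripheral subgroups meet trivially. Everything else is a routine application of Propositions~\ref{prop : isolate lrc}, \ref{prop : free lrc}, and~\ref{convex lower bound}.
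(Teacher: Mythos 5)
Your proposal is correct and follows the same skeleton as the paper's proof --- both reduce the theorem to verifying the two hypotheses of Proposition~\ref{convex lower bound} for $C=\pi_1(M_i)$ inside the amalgam $\pi_1(M)=\pi_1(M_1)\ast_{\phi_*}\pi_1(M_2)$, and both obtain relative convexity of $\pi_1(M_i)$ by showing $\pi_1(T_i)$ is an isolated abelian subgroup of the bi-orderable group $\pi_1(M_i)$ and then chaining Propositions~\ref{prop : isolate lrc} and~\ref{prop : free lrc}. Where you diverge is in the two technical inputs. The paper gets everything from a single citation: by de la Harpe--Weber \cite[Theorem~3]{HW14}, the hypotheses (incompressible boundary, $M_i\not\cong S^1\times S^1\times[0,1]$, the JSJ piece through $T_i$ not Seifert fibred) make $\pi_1(T_i)$ \emph{malnormal} in $\pi_1(M_i)$; malnormality immediately yields isolation (if $g^k\in\pi_1(T_i)$ with $g\notin\pi_1(T_i)$ then $g^k\in g\pi_1(T_i)g^{-1}\cap\pi_1(T_i)$), and via Proposition~\ref{malnormal amalgam} it also yields malnormality of $\pi_1(M_i)$ in $\pi_1(M)$, which is exactly the conjugation condition $w\pi_1(M_i)w^{-1}\subseteq\pi_1(M_i)\Rightarrow w\in\pi_1(M_i)$. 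You instead prove isolation from scratch by combining the JSJ Bass--Serre tree with the parabolic geometry of the hyperbolic piece $W_i$ (essentially re-deriving the relevant case of \cite{HW14}), and you handle the conjugation condition by a separate Bass--Serre argument using only that $\pi_1(M_i)$ is non-abelian and the edge groups are $\mathbb{Z}^2$. Both routes are sound: the paper's is shorter because one malnormality statement does double duty and it never needs to invoke geometrization explicitly, while yours is more self-contained and makes visible exactly where each topological hypothesis is used; the details you flag (distinct boundary tori giving distinct cusps, roots of parabolics being parabolic) are standard and do go through.
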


Before proceeding with the proof of this theorem, we require the following well-known proposition, whose proof we include for completeness. 

\begin{proposition}
\label{malnormal amalgam}
In an amalgam \(G=A\ast_CB\),  \(A\) is malnormal in \(G\) if and only if \(C\) is malnormal in \(B\).
\end{proposition}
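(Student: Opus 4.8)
The plan is to prove each implication by contraposition. One direction is immediate: if $C$ fails to be malnormal in $B$, there are $b \in B \setminus C$ and $c \in C \setminus \{1\}$ with $bcb^{-1} \in C$; since $A \cap B = C$ inside $A \ast_C B$, both $c$ and $bcb^{-1}$ lie in $A$ while $b \notin A$, so $bAb^{-1} \cap A \neq \{1\}$ with $b \notin A$, and hence $A$ is not malnormal in $G$. This handles the implication from malnormality of $A$ in $G$ to malnormality of $C$ in $B$.

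The substantive direction --- that malnormality of $C$ in $B$ forces malnormality of $A$ in $G$ --- I would also prove contrapositively, using the Bass--Serre tree $T$ of the splitting $G = A \ast_C B$. Recall that $G$ acts on $T$ without inversions, with vertex set $\{gA\} \sqcup \{gB\}$, with $\mathrm{Stab}(A) = A$ and $\mathrm{Stab}(B) = B$, with the stabilizer of the edge joining $A$ and $B$ equal to $A \cap B = C$, and with the neighbours of the vertex $A$ being exactly the $a_0 B$ for $a_0 \in A$ and the neighbours of $B$ being exactly the $b_0 A$ for $b_0 \in B$. Now suppose $A$ is not malnormal, so there are $g \in G \setminus A$ and $a \in A \setminus \{1\}$ with $a' := g a g^{-1} \in A \setminus \{1\}$. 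Then $a'$ lies in $\mathrm{Stab}(A) \cap \mathrm{Stab}(gA)$ and $gA \neq A$, so the elliptic automorphism $a'$ fixes the geodesic from $A$ to $gA$; in particular it fixes $A$, some neighbour $w = a_0 B$ of $A$ (with $a_0 \in A$), and the next vertex $v' \neq A$ on that geodesic. Conjugating by $a_0$: the element $c_0 := a_0^{-1} a' a_0 \in A$ is nontrivial and fixes $B$, hence $c_0 \in A \cap B = C$; moreover $c_0$ fixes $a_0^{-1} v'$, a neighbour of $B$ distinct from $A$, so $a_0^{-1} v' = b_0 A$ with $b_0 \in B \setminus C$. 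Finally $b_0^{-1} c_0 b_0$ fixes $A$ and lies in $B$, hence lies in $C$, so $c_0 \in b_0 C b_0^{-1} \cap C$ with $c_0 \neq 1$ and $b_0 \in B \setminus C$; thus $C$ is not malnormal in $B$.

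The only real work lies in this second direction, and no single step is deep: the task is to carry out the bookkeeping carefully, tracking which vertex-orbit each vertex lies in and invoking $A \cap B = C$ together with the description of neighbours in $T$ (equivalently, transitivity of a vertex stabilizer on its incident edges) at exactly the two points indicated; I expect that bookkeeping to be the only mild obstacle. A tree-free alternative for the second direction proceeds by induction on the syllable length of $g$ in the normal form for $A \ast_C B$: if a reduced word for $g$ has an extremal syllable in $A$, peel it off to contradict minimality of the length; in the remaining case, where both extremal syllables lie in $B \setminus C$, expand $g a g^{-1}$ as a word and observe that it stays reduced of length $\geq 3$ unless $a$, $a'$, and the innermost conjugate all lie in $C$, which is exactly the configuration that contradicts malnormality of $C$ in $B$. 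Either way the same reducedness-and-coset bookkeeping is the crux.
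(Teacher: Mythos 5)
Your proposal is correct, and the easy implication (non\hyp{}malnormality of $C$ in $B$ gives non\hyp{}malnormality of $A$ in $G$, via $A\cap B=C$) is word for word the paper's argument. For the substantive implication you diverge: the paper argues directly that malnormality of $C$ in $B$ forces malnormality of $A$ in $G$, writing $g\in G\setminus A$ as an alternating word $x_1\cdots x_m$ and running a case analysis on whether $x_1$ lies in $A\setminus C$ or $B\setminus C$, using the normal form theorem plus malnormality of $C$ to show the conjugate $x_m^{-1}\cdots x_1^{-1}ax_1\cdots x_m$ never collapses back into $A$. You instead prove the contrapositive on the Bass--Serre tree: an element of $A\setminus\{1\}$ with a conjugate in $A$ fixes the geodesic from the vertex $A$ to the distinct vertex $gA$, and reading off the stabilizers of the first three vertices of that geodesic (after translating by $a_0\in A$) produces a nontrivial element of $b_0Cb_0^{-1}\cap C$ with $b_0\in B\setminus C$. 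Your stabilizer computations check out: $c_0=a_0^{-1}a'a_0$ lands in $A\cap B=C$, the bipartite structure guarantees the geodesic has length at least two so $v'$ exists, and $v'\neq A$ forces $b_0\notin C$. What the tree buys you is that all the cancellation bookkeeping is absorbed into two standard facts (fixed-point sets in trees are connected; vertex stabilizers act transitively on incident edges), at the cost of invoking Bass--Serre theory; the paper's normal-form argument is more self-contained but must track the alternating-word cases by hand. Your sketched ``tree-free alternative'' by induction on syllable length is essentially the paper's proof, so either of your two routes is acceptable.
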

\begin{proof}
First suppose that \(C\) is malnormal in \(B\).
Let \(a\in A\), \(a\neq 1\), and \(x_1x_2\dotsm x_m \in G\setminus A\),  written so that \(m\geq 1\) and \(x_1,x_2,\dotsc,x_m\) are alternately from \(A\setminus C\),   and \(B\setminus C\).  It is a standard result that an alternating product of this form lies in one of the factors $A$ or $B$ if and only if $m=1$. We want to prove that 
\(w=x_m^{-1}\dotsm x_1^{-1} a x_1\dotsm x_m \notin A\).
We consider two cases:
\begin{description}
\item [Case~1]
	\(x_1\in B\setminus C\).   Then  either \(w\) is already an alternating product with more than one term if \(a\in A\setminus C\), so \(w\) is not in \(A\); or  \(w=x_m^{-1}\dotsm x_2^{-1}(x_1^{-1}ax_1)x_2\dotsm x_m\) is an alternating product if \(a\in C\) with either a single term lying in $B\setminus C$ or more than one term.  In either case, \(w\) is not in \(A\). 

\item [Case~2]
\(x_1\in A\setminus C.\) Then \(m\geq 2\). We see that either \(t\coloneqq x_1^{-1}ax_1\) is in \(C\) or in \(A\setminus C\), and proceed as in Case 1.  If \(t\) is in \(C\), then \(w=x_m^{-1}\dotsm x_3^{-1}(x_2^{-1}tx_2) x_3\dotsm x_m\) is either a single term lying in $B\setminus C$, or an alternating product with more than one term. If \(t\) is in \(A\setminus C\),  then \(w=x_m^{-1}\dotsm x_3^{-1}x_2^{-1}t x_2x_3\dotsm x_m\) is an alternating product with more than one term. In either case, \(w\) is not in \(A\). 
\end{description}

If \(C\) is not malnormal in \(B\), then there are some nontrivial \(c\in C\) and  \(b\in B\setminus C\),  such that \(bcb^{-1}\in C\). Then this \(c\) is nontrivial in \(A\) and \(b\) is in \(G\setminus A\) with \(bcb^{-1}\in A\).  Hence \(A\) is not malnormal in \(G\). 
\end{proof}

\begin{proof}[Proof of Theorem \ref{JSJ example}]
Since the boundary tori of $M_i$ are incompressible, the inclusion map $T_i \rightarrow M_i$ induces an injective homomorphism $\pi_1(T_i) \rightarrow \pi_1(M_i)$. Therefore $\pi_1(M) \cong \pi_1(M_1) \ast_{\phi_*} \pi_1(M_2),$ the amalgam of $\pi_1(M_1)$ and $\pi_1(M_2)$ with respect to the isomorphism $\phi_* \colon \pi_1(T_1) \rightarrow \pi_1(T_2)$, by the Seifert-Van Kampen theorem.   Moreover, for each $i$, the manifold $M_i$ is not $ S^1 \times D^2$ since the boundary of $M_i$ is incompressible, so the group $\pi_1(T_i)$ is malnormal in each $\pi_1(M_i)$ by \cite[Theorem 3]{HW14}.  Therefore the subgroup $\pi_1(M_i)$ is malnormal in $\pi_1(M)$ by Proposition \ref{malnormal amalgam}.

Next, suppose that $g \in \pi_1(M_i) \setminus \pi_1(T_i)$, and that $g^k \in \pi_1(T_i)$ for some $k>1$.   Then $\langle g^k \rangle \subset g \pi_1(T_i)g^{-1} \cap \pi_1(T_i)$, which contradicts malnormality of $\pi_1(T_i)$.  So it must be that $g^k \in \pi_1(T_i)$ for some $k>1$ implies $g \in \pi_1(T_i)$, meaning that the subgroup $\pi_1(T_i)$ is relatively convex by Proposition~\ref{prop : isolate lrc}.  It follows that for each $i =1, 2$, the subgroup $\pi_1(T_i)$ is relatively convex in $\pi_1(M_i)$ by Proposition~\ref{prop : free lrc}. 
The conclusion now follows from Proposition~\ref{convex lower bound}.
\end{proof}

This shows that in some cases (and we expect in much more generality), the issue of smoothness of $E_\mathsf{lo}(\pi_1(M))$ reduces to considering the pieces of the JSJ decomposition of $M$, that is, irreducible $3$-manifolds with nonempty boundary consisting of a collection of disjoint incompressible tori.  

From here forward, we focus on an analysis of the simplest possible kind of JSJ piece, called a \emph{knot manifold}, which is a compact, connected, orientable and irreducible manifold $M$ other than $S^1 \times D^2$ having boundary a single torus $T$ and $H_1(M; \mathbb{Q}) \cong \mathbb{Q}$.  For such manifolds, $\pi_1(M)$ is always left-orderable, since $|H_1(M; \mathbb{Z})| = \infty$ and so $\pi_1(M)$ admits a surjection onto $\mathbb{Z}$ \cite[Theorem 1.1]{BRW05}.  

Recall that an element $g \in G$ is called \emph{primitive} if $g = h^n$ for some $h \in G$, $n>0$, implies $g=h$ and $n=1$.  A primitive element
of $\pi_1(T)$ whose image has finite order in $H_1(M;\mathbb{Z})$ will be called a \emph{rational longitude}, we fix a choice of such an element and denote it by $\lambda_M$.  Such an element $\lambda_M$ always exists, by a standard rank argument based on the long exact sequence in homology arising from the pair $(M, \partial M)$ (\cite[Lemma 3.5]{Hatcher}, see also \cite[Section 1.3]{Wat09}).

For every primitive element $\alpha \in \pi_1(T) \subset \pi_1(M)$, the quotient $\pi_1(M) / \langle \langle \alpha \rangle \rangle$
is the fundamental group of a $3$-manifold denoted $M(\alpha)$, which is the manifold constructed as follows.  Choose a simple closed curve $\gamma \subset T$ such that $[\gamma] = \alpha$, and a homeomorphism $f\colon \partial(S^1 \times D^2) \rightarrow T$ sending $\{*\} \times \partial D^2$ to $\gamma$, and set $M(\alpha) = M \cup_f (S^1 \times D^2)$.  The manifold $M(\alpha)$ is called the Dehn filling of $M$ along the curve $\alpha$; a standard argument shows that $H_1(M(\alpha); \mathbb{Z})$ is finite whenever $\alpha \neq \lambda_M^{\pm 1}$ \cite[Lemma 1.5]{Wat09}. Consequently, as $\pi_1(M(\alpha))$ is a finitely generated group whose abelianization $H_1(M(\alpha); \mathbb{Z})$ is finite, it is not locally indicable. 

Therefore from the short exact sequence
\[ 1 \rightarrow \langle \langle \alpha \rangle \rangle \rightarrow \pi_1(M) \rightarrow \pi_1(M(\alpha)) \rightarrow 1
\]
and an application of Corollary \ref{quotient corollary}, we conclude:

\begin{proposition}
If $M$ is a knot manifold, then $E_\mathsf{lo}(\pi_1(M))$ is nonsmooth whenever $M$ admits a primitive element $\alpha \in \pi_1(T)$ distinct from $\lambda_M^{\pm1}$ such that $\pi_1(M(\alpha))$ is left-orderable.
\end{proposition}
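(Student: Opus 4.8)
The plan is to read this off Corollary~\ref{quotient corollary}: to apply it I need a left-orderable quotient of $\pi_1(M)$ that fails to be locally indicable, and the Dehn filling group $\pi_1(M(\alpha))$ is the obvious candidate. First I would record that $\pi_1(M)$ is itself left-orderable: since $M$ is a knot manifold, $H_1(M;\mathbb{Z})$ is infinite, so $\pi_1(M)$ surjects onto $\mathbb{Z}$ and hence is left-orderable by \cite[Theorem~1.1]{BRW05}. Next I would invoke the short exact sequence
\[ 1 \rightarrow \langle \langle \alpha \rangle \rangle \rightarrow \pi_1(M) \rightarrow \pi_1(M(\alpha)) \rightarrow 1, \]
which exhibits $\pi_1(M(\alpha)) \cong \pi_1(M)/\langle\langle \alpha \rangle\rangle$ as a quotient of $\pi_1(M)$.

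It then remains to verify the two hypotheses on this quotient. That $\pi_1(M(\alpha))$ is left-orderable is assumed. That it is not locally indicable follows from the Mayer--Vietoris computation recalled just above the statement: because $\alpha$ is a primitive element of $\pi_1(T)$ distinct from $\lambda_M^{\pm 1}$, the abelianization $H_1(M(\alpha);\mathbb{Z})$ is finite, so $\pi_1(M(\alpha))$ is a finitely generated group with finite abelianization; taking the relevant finitely generated subgroup to be the whole group, one sees it admits no surjection onto $\mathbb{Z}$, hence is not locally indicable. With both points in hand, Corollary~\ref{quotient corollary} gives at once that $E_\mathsf{lo}(\pi_1(M))$ is not smooth.

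I do not anticipate a genuine obstacle here: every ingredient --- the construction of $M(\alpha)$, the finiteness of $H_1(M(\alpha);\mathbb{Z})$ for $\alpha \neq \lambda_M^{\pm1}$, and Corollary~\ref{quotient corollary} itself --- has already been established, so the argument is purely a matter of assembling them. The only point worth stating carefully is that finite abelianization already rules out local indicability, which is exactly the observation made in the paragraph preceding the statement.
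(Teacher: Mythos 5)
Your argument is correct and is exactly the paper's: the proposition is stated as an immediate consequence of the short exact sequence $1 \rightarrow \langle \langle \alpha \rangle \rangle \rightarrow \pi_1(M) \rightarrow \pi_1(M(\alpha)) \rightarrow 1$, the Mayer--Vietoris observation that $H_1(M(\alpha);\mathbb{Z})$ is finite for $\alpha \neq \lambda_M^{\pm 1}$ (so the finitely generated group $\pi_1(M(\alpha))$ is not locally indicable), and Corollary~\ref{quotient corollary}. Nothing is missing.
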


While the technology for determining left-orderability of $\pi_1(M(\alpha))$ for an arbitrary $\alpha$ is not yet developed, there is a conjectural picture known as the L-space conjecture.

\begin{conjecture}(\cite[Conjecture 1]{BGW13}, \cite[Conjecture 5]{Juh15})
\label{Lspace conjecture}
Suppose that $M$ is a compact, connected, orientable irreducible rational homology $3$-sphere that is not homeomorphic to $S^3$.  Then the following are equivalent:
\begin{enumerate}
\item $M$ is not a Heegaard Floer homology L-space;
\item $M$ admits a co-orientable taut foliation;
\item $\pi_1(M)$ is left-orderable.
\end{enumerate}
\end{conjecture}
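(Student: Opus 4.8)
Conjecture~\ref{Lspace conjecture} is the celebrated \emph{$L$-space conjecture} of Boyer--Gordon--Watson~\cite{BGW13} and Juh\'asz~\cite{Juh15}, and it is at present wide open; the plan here is not to prove it but to record which of its implications are established, to locate where the genuine difficulty lies, and to indicate how partial progress feeds back into the non-smoothness questions of this section. The three conditions are of three different natures --- one Floer-homological, one about foliations, one dynamical and algebraic --- and every implication currently known between them in full generality emanates from the middle, geometric condition.

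Two of the six implications are theorems. That $(2)\Rightarrow(1)$ --- a rational homology sphere carrying a co-orientable taut foliation is not a Heegaard Floer $L$-space --- is due to Ozsv\'ath--Szab\'o, with the regularity hypothesis on the foliation relaxed to $C^0$ by Bowden and by Kazez--Roberts; the argument runs through an Eliashberg--Thurston style perturbation to a weakly fillable (hence tight) contact structure and the non-vanishing of its contact invariant in $\widehat{HF}$. That $(2)\Rightarrow(3)$ --- such an $M$ has left-orderable fundamental group --- is a theorem of Boyer--Gordon--Watson, built on the Calegari--Dunfield universal circle construction: a co-orientable taut foliation of an irreducible $M$ produces a non-trivial action of $\pi_1(M)$ on the circle, from which left-orderability of $\pi_1(M)$ is deduced.

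The remaining four implications, $(1)\Rightarrow(2)$, $(3)\Rightarrow(2)$, $(1)\Rightarrow(3)$, and $(3)\Rightarrow(1)$, constitute the open part of the conjecture, and the main obstacle is structural: each would require \emph{manufacturing} a taut foliation, a left-ordering, or Floer-theoretic non-triviality out of the mere failure of one of the other two conditions, whereas the available constructions of co-orientable taut foliations (branched surfaces, sutured manifold hierarchies) and of left-orderings are essentially hands-on and take as input neither the combinatorial data of $\widehat{HF}$ nor an abstract order on $\pi_1(M)$; conversely there is no known passage from a dynamical or order-theoretic hypothesis to a Heegaard Floer computation. Accordingly the realistic strategy, and the one the literature has pursued, is to verify the conjecture class by class: it is a theorem for Seifert fibered spaces (combining Lisca--Stipsicz on the Floer side with Eisenbud--Hirsch--Neumann, Jankins--Neumann, and Boyer--Rolfsen--Wiest on the foliation and orderability sides) and for graph manifolds~\cite{BGH, BC17} (together with Hanselman--Rasmussen--Rasmussen--Watson and N\'emethi), and has been checked for many infinite families obtained by Dehn filling knot manifolds.

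For the purposes of this paper the relevance of the conjecture is direct. Applied to the Dehn fillings $M(\alpha)$ of a knot manifold $M$, Conjecture~\ref{Lspace conjecture} predicts that $\pi_1(M(\alpha))$ is left-orderable exactly when $M(\alpha)$ is not an $L$-space; combined with the short exact sequence $1 \rightarrow \langle\langle \alpha \rangle\rangle \rightarrow \pi_1(M) \rightarrow \pi_1(M(\alpha)) \rightarrow 1$ and Corollary~\ref{quotient corollary}, every such filling with $\alpha \neq \lambda_M^{\pm 1}$ that is left-orderable and not an $L$-space certifies that $E_\mathsf{lo}(\pi_1(M))$ is not smooth. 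Thus each advance on the $L$-space conjecture for the fillings of $M$ translates into the non-smoothness predictions discussed above, and the only knot manifolds for which \emph{our} methods leave smoothness genuinely in doubt are, conjecturally, those all of whose non-trivial, non-longitudinal fillings are $L$-spaces --- precisely the generalized solid tori of~\cite{RR17}.
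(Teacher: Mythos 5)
This item is a conjecture, not a theorem: the paper states the L-space conjecture (citing \cite{BGW13} and \cite{Juh15}) and offers no proof, so there is nothing to compare your argument against, and you are right not to attempt a proof. Your text is a survey of the status of the conjecture plus a restatement of how the paper uses it, and the latter part accurately tracks the paper's own discussion (the short exact sequence for Dehn fillings, Corollary~\ref{quotient corollary}, and the characterization of the exceptional manifolds as generalized solid tori in the sense of \cite{RR17}).

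One factual correction to your survey: only one of the six implications is known in full generality, namely $(2)\Rightarrow(1)$ (Ozsv\'ath--Szab\'o, with the $C^0$ extension by Bowden and Kazez--Roberts). Your claim that $(2)\Rightarrow(3)$ is a theorem of Boyer--Gordon--Watson is not correct: the Calegari--Dunfield universal circle gives an action of $\pi_1(M)$ on $S^1$, but passing from a circle action to a left-ordering requires lifting to an action on the line, which is obstructed by the Euler class and is not known to be achievable in general. The implication $(2)\Rightarrow(3)$, like the other four, is known only for special classes (Seifert fibered spaces, graph manifolds as in \cite{BGH} and \cite{BC17}, and various families of fillings). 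If you retain this paragraph, restate it with $(2)\Rightarrow(1)$ as the sole unconditional implication.
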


The behaviour of Dehn filling with respect to Heegaard Floer homology is controllable,  in the sense that
\[ \mathcal{L}_M = \{ \alpha \in \pi_1(T) \mid \alpha \mbox{ primitive and $M(\alpha)$ is an L-space} \}
\]
is fairly well-understood.  In particular, we make the following observation:  Owing to the short exact sequence 
\[ 1 \rightarrow \langle \langle \alpha \rangle \rangle \rightarrow \pi_1(M) \rightarrow \pi_1(M(\alpha)) \rightarrow 1
\]
and Corollary~\ref{quotient corollary}, the L-space conjecture predicts that $E_\mathsf{lo}(\pi_1(M))$ is nonsmooth whenever there exists $\alpha \notin \mathcal{L}_M$ distinct from $\lambda_M^{\pm 1}$. The manifolds for which such an $\alpha$ fails to exist are known as generalized solid tori (\cite[Proposition 7.5]{RR17}, c.f. \cite[Section 1.5]{HRW22}). That is, a generalized solid torus is a knot manifold $M$ such that if $\alpha \neq \lambda_M$ is primitive, then $M(\alpha)$ is an L-space; examples of such manifolds are the solid torus itself and the  twisted $I$-bundle over the Klein bottle.  

We therefore conjecture:

\begin{conjecture}
\label{3m conjecture}
If $M$ is a knot manifold and $E_\mathsf{lo}(\pi_1(M))$ is smooth, then $M$ is a generalized solid torus. 
\end{conjecture}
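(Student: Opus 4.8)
The plan is to argue by contraposition: assuming $M$ is a knot manifold that is \emph{not} a generalized solid torus, I will show that $E_\mathsf{lo}(\pi_1(M))$ is not smooth. Unwinding the definition of a generalized solid torus recalled above, $M$ fails to be one precisely when there exists a primitive slope $\alpha \in \pi_1(T)$, distinct from $\lambda_M^{\pm1}$, for which the Dehn filling $M(\alpha)$ is \emph{not} an L-space. The whole difficulty then lies in converting ``$M(\alpha)$ is not an L-space'' into the hypothesis needed to invoke the Dehn-filling non-smoothness criterion established above (the Proposition preceding Conjecture~\ref{Lspace conjecture}), namely that $\pi_1(M(\alpha))$ be left-orderable.

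I would carry this out in three steps. \textbf{Step 1 (choosing a good slope).} By the structure theory of L-space filling slopes (\cite{RR17,HRW22}), a knot manifold that is not a generalized solid torus has infinitely many non-L-space slopes; since $S^3$ is itself an L-space and an irreducible $3$-manifold with torus boundary admits only finitely many reducible Dehn fillings, discarding the latter leaves a non-L-space slope $\alpha \neq \lambda_M^{\pm1}$ with $M(\alpha)$ an irreducible rational homology sphere distinct from $S^3$ (recall $H_1(M(\alpha);\mathbb{Z})$ is finite once $\alpha \neq \lambda_M^{\pm1}$). \textbf{Step 2 (orderability of the filling).} I apply the implication $(1)\Rightarrow(3)$ of the L-space conjecture (Conjecture~\ref{Lspace conjecture}) to $M(\alpha)$: as $M(\alpha)$ is not an L-space, $\pi_1(M(\alpha))$ is left-orderable. \textbf{Step 3 (non-smoothness).} Feeding the slope $\alpha \neq \lambda_M^{\pm1}$, with $\pi_1(M(\alpha))$ left-orderable, into the Dehn-filling criterion above yields that $E_\mathsf{lo}(\pi_1(M))$ is not smooth, completing the contrapositive.

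The hard part is Step~2: the implication ``$M(\alpha)$ not an L-space $\Rightarrow \pi_1(M(\alpha))$ left-orderable'' is exactly the deepest open direction of the L-space conjecture, which is why Conjecture~\ref{3m conjecture} is posed conditionally rather than proved. I would therefore isolate the unconditional content. The Dehn-filling criterion already shows, with no appeal to the L-space conjecture, that smoothness of $E_\mathsf{lo}(\pi_1(M))$ forces $M$ to admit no non-longitudinal filling slope with left-orderable fundamental group---that is, $M$ is an LO-generalized solid torus in the sense of \cite{BGH}---and the stated conjecture then follows from the conjectural coincidence of generalized solid tori with LO-generalized solid tori. To obtain the conclusion unconditionally on individual families, I would instead feed into Step~2 the partial results of \cite{BGH} and \cite{BC17}, which propagate left-orderability through Dehn filling for Seifert fibered and graph-manifold pieces and for manifolds carrying a co-orientable taut foliation; for such $M$ the three steps go through with no conjectural input.

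Finally, I note a route that could bypass Step~2 entirely. Rather than certifying left-orderability of the quotient $\pi_1(M(\alpha))$, one could construct directly a closed, $\pi_1(M)$-invariant subset of $\LO(\pi_1(M)) \setminus \CLO(\pi_1(M))$ and apply Proposition~\ref{prop : LO minus CO}. A co-orientable taut foliation detected on $M(\alpha)$ produces, via the Calegari--Dunfield universal-circle construction, an action of $\pi_1(M(\alpha))$ from which non-Conradian orderings can be extracted and pulled back along $\pi_1(M) \twoheadrightarrow \pi_1(M(\alpha))$ exactly as in Proposition~\ref{quotient condition}. I expect the delicate point of this variant to be verifying, in the generality required to cover every $M$ that is not a generalized solid torus, that the resulting family of pulled-back orderings is genuinely closed and meets $\CLO(\pi_1(M))$ nowhere.
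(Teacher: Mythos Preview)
The statement is a \emph{conjecture}, and the paper does not prove it; the surrounding text offers only a heuristic motivation, namely that the L-space conjecture together with Corollary~\ref{quotient corollary} would give non-smoothness of $E_\mathsf{lo}(\pi_1(M))$ whenever $M$ admits a non-L-space filling slope $\alpha\neq\lambda_M^{\pm1}$, and that the manifolds lacking such an $\alpha$ are exactly the generalized solid tori. Your proposal reproduces precisely this heuristic, and you correctly flag that Step~2 is the open direction $(1)\Rightarrow(3)$ of Conjecture~\ref{Lspace conjecture}, so that what you have written is not a proof but the same conditional argument the paper uses to justify posing the conjecture.

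Your additional remarks---isolating the unconditional statement that smoothness forces $M$ to be an LO-generalized solid torus, and sketching a foliation/universal-circle route bypassing left-orderability of the filling---go somewhat beyond what the paper spells out, but are consistent with its viewpoint (the paper does mention the conjectural identification of generalized solid tori with LO-generalized solid tori). One small caution on Step~1: to invoke Conjecture~\ref{Lspace conjecture} you need $M(\alpha)$ irreducible and $\neq S^3$, and your appeal to finiteness of reducible fillings plus ``infinitely many non-L-space slopes'' handles this, but the paper's own heuristic does not bother with this refinement, since it is content to leave the whole argument conjectural.
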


It should be noted that $E_\mathsf{lo}(\pi_1(M))$ is smooth for the generalized solid tori listed above: The fundamental group of the solid torus is the integers, so $E_\mathsf{lo}(\pi_1(M))$ is smooth in this case, and the fundamental group of the twisted $I$-bundle over the Klein bottle is $ \langle x, y \mid xyx^{-1} = y^{-1} \rangle$, which admits only four orderings as in Example \ref{Klein}. So $E_\mathsf{lo}(\pi_1(M))$ is smooth in this case as well.

On the other hand, the set of $3$-manifolds that are generalized solid tori is conjecturally the same as the set of all LO-generalized solid tori, introduced in \cite[Page 24]{BGH}.  An LO-generalized solid torus is a knot manifold $M$ such that the restriction of every left-ordering of $\pi_1(M)$ to $\pi_1(T) \subset \pi_1(M)$ yields a lexicographic ordering of $\pi_1(T)$ arising from the short exact sequence
\[ 1 \rightarrow \langle \lambda_M \rangle \rightarrow \pi_1(T) \rightarrow \mathbb{Z} \rightarrow 1.
\]
The conjectured relationship with generalized solid tori comes from the fact that for these manifolds, $\pi_1(M(\alpha))$ is not left-orderable whenever $\alpha \neq \lambda_M$.

While $E_\mathsf{lo}(\pi_1(M))$ is easily understood for the solid torus and the Klein bottle, in \cite[Proposition 7.2]{BGH} the authors show that the hyperbolic knot manifold $M  = v2503$ is an example of an LO-generalized solid torus, and so is conjecturally also a generalized solid torus. For this manifold, the complexity of $E_\mathsf{lo}(\pi_1(M))$ is completely unknown.  

\begin{question}
    Determine the complexity of $E_\mathsf{lo}(\pi_1(M))$ when for $M  = v2503$.
\end{question}

We are able to verify Conjecture \ref{3m conjecture} for the case where $M$ is the complement of a knot in $S^3$, which in some sense is the simplest class of knot manifolds (in general, if $M$ is a closed $3$-manifold and $K$ is a knot in $M$ which does not bound a disk and is not contained in a $3$-ball, then the complement of $K$ in $M$ is a knot manifold). However to verify Conjecture \ref{3m conjecture} even in this special case, we need a generalization of Corollary \ref{quotient corollary}, as even in this simplest case one cannot guarantee the existence of an appropriate Dehn filling with left-orderable fundamental group.
\begin{proposition}
\label{restriction trick}
Suppose that $G$ is a countable left-orderable group, and that $H \subset G$ is a subgroup.   If there exists $P \in \LO(G)$ such that:
\begin{enumerate-(i)}
\item 
$\{ gPg^{-1} \cap H \mid g \in G\}$ is a closed subset of $\LO(H)$, 
\item
\label{item ii : restriction trick}
there is no $Q \in \CLO(G)$ and $g \in G$ satisfying $Q \cap H = gPg^{-1} \cap H$
 \end{enumerate-(i)}
 then $E_\mathsf{lo}(G)$ is not smooth.
\end{proposition}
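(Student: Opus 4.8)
The plan is to manufacture a nonempty, closed, $G$-invariant subset of $\LO(G)\setminus\CLO(G)$ out of the ordering $P$, and then conclude directly by Proposition~\ref{prop : LO minus CO}. The natural candidate is the closure of the conjugacy orbit of $P$ inside $\LO(G)$.

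First I would record the restriction map. For any $P\in\LO(G)$ the set $P\cap H$ is a positive cone on $H$, since $(P\cap H)(P\cap H)\subseteq P\cap H$ and $(P\cap H)\sqcup(P\cap H)^{-1}=H\setminus\{1\}$; i.e.\ conditions (2a),(2b) of Proposition~\ref{prop : equivalent defs LO} survive intersection with the subgroup $H$. This gives a map $r\colon\LO(G)\to\LO(H)$, $r(P)=P\cap H$, which is continuous because for $h\in H\setminus\{1\}$ one has $r^{-1}(U^H_h)=U^G_h$, and the sets $U^H_h$ form a subbasis of $\LO(H)$. (Note $r$ is not $G$-equivariant, as conjugation in $G$ need not preserve $H$, but this plays no role.)

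Next let $\mathcal O=G\cdot P=\{gPg^{-1}:g\in G\}\subseteq\LO(G)$ and let $\overline{\mathcal O}$ be its closure. As $G$ acts by homeomorphisms, $\overline{\mathcal O}$ is $G$-invariant; it is closed by definition and contains $P$, so it is nonempty. The one substantive step is the identity
\[
r\bigl(\overline{\mathcal O}\bigr)=r(\mathcal O)=\{\,gPg^{-1}\cap H:g\in G\,\}.
\]
Indeed, continuity of $r$ gives $r(\overline{\mathcal O})\subseteq\overline{r(\mathcal O)}$, and hypothesis~(i) says exactly that $r(\mathcal O)$ is closed, so $\overline{r(\mathcal O)}=r(\mathcal O)$; the reverse inclusion is trivial. (Alternatively, $\overline{\mathcal O}$ is compact since $\LO(G)$ is, so $r(\overline{\mathcal O})$ is compact hence closed and contains $r(\mathcal O)$.) Conceptually: although $\overline{\mathcal O}$ may well contain orderings of $G$ that are not conjugates of $P$, each such limit ordering still restricts on $H$ to the restriction of some genuine conjugate of $P$. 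This is precisely where hypothesis~(i) is used, and it is the step I would expect to require the most care.

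Finally I would verify $\overline{\mathcal O}\cap\CLO(G)=\emptyset$. If some $Q\in\overline{\mathcal O}$ were Conradian, then $Q\cap H=r(Q)\in r(\overline{\mathcal O})=r(\mathcal O)$, so $Q\cap H=gPg^{-1}\cap H$ for some $g\in G$; together with $Q\in\CLO(G)$ this contradicts hypothesis~(ii). Hence $\overline{\mathcal O}$ is a nonempty, closed, $G$-invariant subset of $\LO(G)\setminus\CLO(G)$, and Proposition~\ref{prop : LO minus CO} gives that $E_\mathsf{lo}(G)$ is not smooth. Apart from the displayed identity, everything else is a routine unwinding of definitions, so I do not anticipate further obstacles.
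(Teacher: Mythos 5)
Your proposal is correct and follows essentially the same route as the paper: both consider the closed, $G$-invariant set $\overline{\{gPg^{-1} : g \in G\}}$, use continuity of the restriction map $r$ together with hypothesis (i) to see that every element of this closure restricts on $H$ to some $gPg^{-1}\cap H$, and then invoke hypothesis (ii) and Proposition~\ref{prop : LO minus CO}. If anything, your formulation $r(\overline{\mathcal O})\subseteq\overline{r(\mathcal O)}=r(\mathcal O)$ states the key step slightly more cleanly than the paper's phrasing in terms of accumulation points.
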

\begin{proof}
First observe that the restriction map $r\colon \LO(G) \rightarrow \LO(H)$ is continuous, and consider the closed, $G$-invariant set $\overline{\{gPg^{-1} \mid g \in G\}}$. 

Let $Q \in \overline{\{gPg^{-1} \mid g \in G\}}$.  By Proposition~\ref{prop : LO minus CO} it suffices to show that \(Q\) is not Conradian.  First, if $Q =gPg^{-1}$ for some $g \in G$ then $Q \notin \CLO(G)$ by~\ref{item ii : restriction trick}.  On the other hand, if $Q$ is an accumulation point of $\{ gPg^{-1} \mid g \in G\}$, then since $\LO(G)$ and $\LO(H)$ are metrizable and $r$ is continuous, $r(Q)$ is an accumulation point of $\{ gPg^{-1} \cap H \mid g \in G\}$.  But as this set is closed, we must have $Q \cap H =gPg^{-1} \cap H$ for some $g \in G$, which again forces $Q \notin \CLO(G)$ by~\ref{item ii : restriction trick}.  
\end{proof}

We recall the basics of knot groups.  Recall that given a knot $K \subset S^3$, the knot group of $K$ is the fundamental group $\pi_1(S^3 \setminus N (K))$, where $N(K)$ denotes a tubular neighbourhood of $K$.  As $\partial(N(K))$ is a torus, $\pi_1(\partial(N(K))) \cong {\mathbb{Z}\oplus\mathbb{Z}}$, and it is a classical fact that inclusion $i\colon \partial(N(K)) \rightarrow S^3 \setminus N (K)$ induces an injective homomorphism $i_*\colon \pi_1(\partial(N(K))) \rightarrow \pi_1(S^3 \setminus N(K))$.   To simplify notation, we write $\pi (K)$ for the knot group of $K$. It is straightforward to check that the first homology group $H_1(S^3 \setminus N (K)) \cong \mathbb{Z}$, as such there is an abelianization map $\phi_{ab}\colon \pi (K) \rightarrow \mathbb{Z}$ for every knot $K$ in $S^3$.  The subgroup $i_*( \pi_1(\partial(N(K))))$ admits a choice of basis $\{ \mu, \lambda \}$ satisfying $\phi_{ab}(\mu) = 1$ and $\phi_{ab}(\lambda) = 0$.

\begin{lemma}
\label{conrad extension}
Suppose that $K$ is a knot in $S^3$, that $p, q$ are integers with $p \neq 0$,
and that $<$ is any left-ordering of $\pi(K)$.  If there exists $g \in \pi (K)$ such that $(\mu^p \lambda^q)^n < g$ for all $n \in \mathbb{Z}$, then $<$ is not Conradian.
\end{lemma}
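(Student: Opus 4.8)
The plan is to prove the contrapositive: assuming $<$ is a Conradian left-ordering of $\pi(K)$, I will show that no $g\in\pi(K)$ satisfies $(\mu^p\lambda^q)^n<g$ for every $n\in\mathbb{Z}$; writing $a:=\mu^p\lambda^q$, this amounts to showing that $\langle a\rangle$ is not bounded above in $<$. Since $p\neq 0$ and $\phi_{ab}(\mu)=1$, $\phi_{ab}(\lambda)=0$, the element $a$ maps to $p\neq 0$ under the abelianization $\phi_{ab}\colon\pi(K)\to\mathbb{Z}$; in particular $a$ has infinite order, and I may assume $a>1$, since otherwise I replace $a$ by $a^{-1}=\mu^{-p}\lambda^{-q}$, which is again of the required form (with $-p\neq 0$) and for which the displayed inequality still holds after $n\mapsto -n$.

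The heart of the argument is to locate $a$ with respect to the convex subgroups of $<$. Because $\pi(K)$ is finitely generated and the convex subgroups of any left-ordering form a chain under inclusion, the union $C_{\max}$ of all \emph{proper} convex subgroups is itself convex, and it must be proper: if it were all of $\pi(K)$, a finite generating set would already lie in a single proper convex subgroup. Thus $C_{\max}$ is the largest proper convex subgroup, and $(C_{\max},\pi(K))$ is a convex jump. Since $<$ is Conradian, Conrad's theorem on convex jumps (see, e.g., \cite{Con59,ClaRol}) gives that $C_{\max}\lhd\pi(K)$ and that $\pi(K)/C_{\max}$ is order-isomorphic to a nontrivial subgroup of $(\mathbb{R},+)$. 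As this quotient is a nontrivial torsion-free abelian group, the quotient map factors through $\pi(K)^{\mathrm{ab}}\cong\mathbb{Z}$; but $\mathbb{Z}$ has no nontrivial torsion-free proper quotient, so $\pi(K)/C_{\max}\cong\mathbb{Z}$ and $C_{\max}=\ker\phi_{ab}=[\pi(K),\pi(K)]$. Since $\phi_{ab}(a)=p\neq 0$, it follows that $a\notin C_{\max}$, and hence that $a$ lies in no proper convex subgroup of $<$.

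To finish, suppose toward a contradiction that some $g$ satisfies $a^n<g$ for all $n$; then $g>a^0=1$. Since $a>1$ and $a\notin C_{\max}$, convexity forces $a$ to lie above all of $C_{\max}$; and since $g>1$, either $g\in C_{\max}$ — impossible, as this would give $g<a=a^1$ — or $g$ also lies above $C_{\max}$. In the latter case $aC_{\max}$ and $gC_{\max}$ are both positive in the Archimedean ordered group $\pi(K)/C_{\max}\cong\mathbb{Z}$, so $a^nC_{\max}>gC_{\max}$ for all sufficiently large $n$, and therefore $a^n>g$ — contradicting $a^n<g$. Hence no such $g$ exists, which is precisely the contrapositive of the lemma.

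The only genuinely non-routine ingredient is the structure theory of Conradian left-orderings used in the second paragraph — the existence of a largest proper convex subgroup of a finitely generated group, together with Conrad's identification of convex jumps with real Archimedean quotients — and I expect that assembling these standard facts cleanly, rather than any real difficulty, will be the main point requiring care. The hypothesis $p\neq 0$ enters exactly here: it is what places $a$ outside $[\pi(K),\pi(K)]=C_{\max}$, hence outside every proper convex subgroup; for $p=0$ the statement genuinely fails, since the rational longitude lies in $C_{\max}$ and is thus bounded above by anything exceeding $C_{\max}$. One could alternatively avoid the structure theorem and argue directly from the defining inequality $f,h>1\Rightarrow 1<f^{-1}hf^{2}$ that $\langle a\rangle$ is cofinal, but routing through convex subgroups keeps the proof short.
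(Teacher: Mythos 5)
Your proof is correct and follows essentially the same route as the paper's: both identify the maximal proper convex subgroup of a Conradian ordering of the finitely generated group $\pi(K)$ with $\ker\phi_{ab}=[\pi(K),\pi(K)]$, so that the quotient ordering is the (possibly reversed) standard ordering of $\mathbb{Z}$, and then derive the contradiction from the unboundedness of $np$. The only difference is cosmetic: you re-derive from Conrad's convex-jump theorem what the paper cites directly from Clay--Rolfsen (Proposition 9.19 and Problem 9.20).
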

\begin{proof}
Suppose that $<$ is a Conradian left-ordering of $\pi(K)$.  As $\pi(K)$ is finitely generated, there exists a maximal $<$-convex subgroup $C$ that is normal in $\pi(K)$ and such that $\pi(K)/C$ is abelian and the natural quotient ordering of $\pi(K)/C$ is Archimedean \cite[Proposition 9.19]{ClaRol}.   As such, the quotient map $\pi(K) \rightarrow \pi(K)/C$ can be identified with the abelianization map $\phi_{ab}\colon \pi (K) \rightarrow \mathbb{Z}$, and the inherited ordering of $\pi(K)/C$ can be identified with the natural ordering of $\mathbb{Z}$ (up to possibly reversing the ordering) \cite[Problem 9.20]{ClaRol}.  Therefore if $<$ is a Conradian ordering of $\pi(K)$, then $\phi_{ab}\colon \pi (K) \rightarrow \mathbb{Z}$ is an order-preserving map, in the sense that $g<h$ implies $\phi_{ab}(g) \leq \phi_{ab}(h)$ for all $g, h \in \pi(K)$.

Therefore if $(\mu^p \lambda^q)^n < g$ for all $n \in \mathbb{Z}$, then $\phi_{ab}((\mu^p \lambda^q)^n) = np \leq \phi_{ab}(g)$ for all $n \in \mathbb{Z}$.  This cannot happen.
\end{proof}

We next need a detailed analysis of the possible orderings of $\mathbb{Z} \oplus \mathbb{Z}$ in order to use the results of \cite{BGH}. 

Identifying $\mathbb{Z} \oplus \mathbb{Z}$ with the integer lattice points in $\mathbb{R}^2$ and choosing $a = (a_1, a_2) \in \mathbb{R}^2 \setminus \{(0,0)\}$, there are two canonical positive cones in $LO(\mathbb{Z} \oplus \mathbb{Z})$ determined by $a$ if $\frac{a_1}{a_2}$ is irrational, they are:
\[ P_a^+ = \{ (m,n) \mid a_1m + a_2n >0 \} \mbox{ and }  P_a^- = \{ (m,n) \mid a_1m + a_2n <0 \}.
\]
If $\frac{a_1}{a_2}$ is rational, then there are four associated positive cones, they are:
\[P_a^{++}= \{ (m,n) \mid a_1m + a_2n >0 \mbox{ or there exists $c>0$ such that } (-a_2, a_1) = (cn,cm) \}
\]
\[P_a^{+-}= \{ (m,n) \mid a_1m + a_2n >0 \mbox{ or there exists $c<0$ such that } (-a_2, a_1) = (cn,cm) \}
\]
\[P_a^{-+}= \{ (m,n) \mid a_1m + a_2n <0 \mbox{ or there exists $c>0$ such that } (-a_2, a_1) = (cn,cm) \}
\]
\[P_a^{--}= \{ (m,n) \mid a_1m + a_2n <0 \mbox{ or there exists $c<0$ such that } (-a_2, a_1) = (cn,cm) \}.
\]
Moreover, every $P \in \LO(\mathbb{Z} \oplus \mathbb{Z})$ arises from a choice of $a \in \mathbb{R}^2$ and one of these constructions above.  Thus, every $P \in \LO(\mathbb{Z} \oplus \mathbb{Z})$ has associated to it a \emph{slope}, which is the element $\alpha = [(-a_2, a_1)] \in \mathbb{P}(\mathbb{R}^2)$--we say that $P$ \emph{detects the slope $\alpha$} (note that this is precisely the slope of the line that divides $\mathbb{R}^2$ into two halves, one containing only positive elements of $\mathbb{Z} \oplus \mathbb{Z}$ and the other containing only negative elements of $\mathbb{Z} \oplus \mathbb{Z}$). Moreover, we are taking the equivalence class of $(-a_2, a_1)$ in the projectivization of $\mathbb{R}^2$ since each $P \in \LO(\mathbb{Z} \oplus \mathbb{Z})$ can be written in exactly two different ways, e.g. as $P_a^+$ and $P_{-a}^-$ for some choice of $a \in \mathbb{R}^2$.

\begin{lemma}
\label{convex knot lemma}
Suppose that $P \in \LO(\mathbb{Z} \oplus \mathbb{Z})$ detects $[(p,q)]$, where $p,q$ are integers.  Then in the associated ordering $<$ of $\mathbb{Z} \oplus \mathbb{Z}$, the subgroup $\langle (p,q) \rangle$ is convex.
\end{lemma}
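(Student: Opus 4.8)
The plan is to unwind the definition of slope detection and show directly that the cyclic subgroup $\langle (p,q) \rangle$ is an interval around the identity. Write $\alpha = [(p,q)]$, so that $P$ arises from a choice of $a = (a_1,a_2) \in \mathbb{R}^2 \setminus \{(0,0)\}$ with $(-a_2,a_1)$ a positive real multiple of $(p,q)$; since $p,q$ are integers, $\frac{a_1}{a_2}$ is rational and $P$ is one of $P_a^{++}, P_a^{+-}, P_a^{-+}, P_a^{--}$. The key linear-algebraic observation is that $a_1 p + a_2 q = -a_1 a_2 + a_2 a_1 = 0$, so every element of $\langle (p,q) \rangle$, i.e.\ every $(mp, mq)$ with $m \in \mathbb{Z}$, lies on the dividing line $\{(x,y) : a_1 x + a_2 y = 0\}$ and hence its sign with respect to $<$ is governed entirely by the secondary ``$c>0$ or $c<0$'' clause in the definition of the positive cone, not by the linear functional.

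Next I would verify convexity. Suppose $(u,v) \in \mathbb{Z} \oplus \mathbb{Z}$ satisfies $(mp,mq) < (u,v) < (np,nq)$ for some integers $m < n$; I must show $(u,v) \in \langle (p,q) \rangle$. Translating by $-(mp,mq)$ (which is order-preserving since $<$ is a left-ordering of an abelian group, hence bi-invariant), we may assume $(0,0) < (u',v') < (kp,kq)$ for $k = n-m > 0$ and $(u',v') = (u-mp, v-mq)$; it suffices to show $(u',v') \in \langle (p,q) \rangle$, as $\langle(p,q)\rangle$ is a subgroup. Since $(kp,kq) \in P$ and $a_1(kp)+a_2(kq) = 0$, the element $(kp,kq)$ is positive only via the secondary clause, which pins down the sign of the ``tie-breaking'' constant $c$; concretely this tells us we are in $P_a^{++}$ or $P_a^{+-}$ with the sign of $c$ arranged so that positive multiples of $(p,q)$ are positive and negative multiples are negative. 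Now if $a_1 u' + a_2 v' \neq 0$ then $(u',v')$ and $(kp,kq) - (u',v')$ have linear-functional values of the same sign (both must be positive, i.e.\ $>0$, since both $(u',v')$ and $(kp,kq)-(u',v')$ are positive and at most one of them can be on the line), which is impossible because $a_1(u'+(kp-u'))+a_2(v'+(kq-v')) = a_1(kp)+a_2(kq) = 0$ forces the two linear values to be negatives of each other. Hence $a_1 u' + a_2 v' = 0$, so $(u',v')$ is a real multiple of $(p,q)$; being an integer lattice point it is an integer multiple, and positivity forces the multiple to be positive, so $(u',v') \in \langle (p,q) \rangle$ as desired.

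The main obstacle, and the place where I would be most careful, is the bookkeeping of the four cases $P_a^{\pm\pm}$ and the sign of $c$: one has to confirm that whichever of these four cones $P$ equals, the secondary clause consistently makes \emph{all} nonzero multiples of $(p,q)$ of one sign comparable to $(0,0)$ in a way that is symmetric under negation (so that both $(mp,mq)$ and $-(mp,mq) = (-m)(p,q)$ land in $\langle(p,q)\rangle$, one positive and one negative). This is really just the statement that each $P_a^{\pm\pm}$, restricted to the line through $(p,q)$, induces a genuine left-ordering of $\langle(p,q)\rangle \cong \mathbb{Z}$, which is automatic, but it must be checked that no lattice point strictly between two multiples of $(p,q)$ can sneak off the line. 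The argument above handles that via the additivity identity $a_1(kp)+a_2(kq)=0$; everything else is routine. I would present the case $P = P_a^{++}$ in detail and remark that the other three are identical after replacing $a$ by $-a$ and/or reversing the sign convention on $c$.
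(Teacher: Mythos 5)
Your proof is correct and follows essentially the same route as the paper: reduce to $(0,0) < x < k(p,q)$, observe that the defining linear functional vanishes on $\langle(p,q)\rangle$, and derive a contradiction from the fact that $x$ and $k(p,q)-x$ would both need strictly positive functional values even though these values sum to zero, forcing $x$ onto the line. The paper simply instantiates this with $a=(q,-p)$ so the contradiction reads $aq-pb>0$ and $pb-aq>0$, and likewise treats only $P_a^{++}$ in detail, leaving the other three cones to symmetry.
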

\begin{proof}
It suffices to show that if $(0,0) < (a,b) < (np, nq)$ for some $n \in \mathbb{Z}$ then $(a, b) = (kp, kq)$ for some $k \in \mathbb{Z}$.  To show this, suppose that $P = P_{(q, -p)}^{++}$, the other cases are similar.

Then $(a, b) \in P$ implies $aq-pb>0$, or there exists $c >0$ such that $(p,q) = c(a,b)$.   If the latter holds, we are done.   The former, on the other hand, leads to a contradiction:  Since $(a,b) < (np, nq)$  also holds, then $(np-a, nq-b) \in P$, which implies $(np-a)q-(nq-b)p >0$ or $pb-aq >0$.  This proves the lemma.
\end{proof}

Now, given a knot $K \subset S^3$, let $H$ denote the subgroup $\langle \mu, \lambda \rangle \subset \pi(K)$ and $r\colon \LO(\pi(K)) \rightarrow \LO(H)$ denote the restriction map.  The authors of~\cite{BGH} define a slope $\alpha \in \mathbb{P}(H \otimes \mathbb{R})$
to be \emph{order-detected} if there exists a positive cone $P \in \LO(\pi(K))$ such that $r(gPg^{-1})$ detects $\alpha$ for all $g \in \pi(K)$.  Note that the subgroup $H$ is isomorphic to $\mathbb{Z} \oplus \mathbb{Z}$, so $\mathbb{P}(H \otimes \mathbb{R})$ can be identified with the projectivization of $\mathbb{R}^2$ and $H$ can be identified with the integer lattice points, meaning that a typical slope arising from one of these lattice points is of the form $[\mu^p \lambda^q]$ for $p, q \in \mathbb{Z}$.

A \emph{Seifert surface} for a knot $K \subset S^3$ is an orientable surface $\Sigma \subset S^3$ such that $\partial \Sigma = K$.  Denoting the genus of a surface $\Sigma$ by $g(\Sigma)$, we define the genus of a knot $K$ to be 
\[ g(K ) = \min \{ g(\Sigma) \mid \mbox{$\Sigma$ is a Seifert surface for $K$}\}.
\]
The only knot $K$ for which $g(K) = 0$ is the unknot, for all other knots $g(K) >0$.

\begin{proof}[Proof of Theorem \ref{knot_result}]
According to \cite[Corollary 1.4]{BGH}, if $K$ is a nontrivial knot in $S^3$, then the slope $[\mu^{2g(K)-1} \lambda]$
is order-detected.  Correspondingly, using $H$ to denote the subgroup $\langle \mu, \lambda \rangle \subset \pi(K)$ and $r\colon \LO(\pi(K)) \rightarrow \LO(H)$ the restriction map, there is a positive cone $P \subset \pi(K)$ such that $r(gPg^{-1})$ detects $[\mu^{2g(K)-1} \lambda]$ for all $g \in \pi(K)$.  But then if we write $a = \mu^{-1} \lambda^{2g(K)-1}$ this means $r(gPg^{-1}) \in \{  P_a^{++}, P_a^{+-}, P_a^{-+}, P_a^{--}\}$ for all $g \in \pi(K)$, so that $r(\{ gPg^{-1} \mid g \in \pi(K)\})$ is finite, and hence closed in $\LO(H)$.

Moreover, suppose $Q \in \LO(\pi(K))$ satisfies $r(Q) \in  \{  P_a^{++}, P_a^{+-}, P_a^{-+}, P_a^{--}\}$. Then by Lemma \ref{convex knot lemma} the ordering associated to $Q$ must satisfy $(\mu^{2g(K)-1} \lambda)^n < \mu$ for all $n \in \mathbb{Z}$, since $\langle \mu^{2g(K)-1} \lambda \rangle $ is convex with respect to the ordering determined by $r(Q)$ and $\mu \notin \langle \mu^{2g(K)-1} \lambda \rangle$.  Since $K$ is nontrivial, $g(K)>0$ and so $2g(K) -1 \neq 0$. It therefore follows from Lemma \ref{conrad extension} that $Q \notin \CLO(\pi(K))$.

That $E_\mathsf{lo}(\pi(K))$ is not smooth now follows from Proposition \ref{restriction trick}.
\end{proof}

\section{Short exact sequences}

\label{sec : ses}

One of the main tools used in \cite{CalCla22}, and in many arguments in this paper, is short exact sequences of left-orderable groups.
In this section we give a general analysis of such methods in terms of equivariant maps. 

Suppose that 
\[1 \rightarrow K \stackrel{i}{\rightarrow} G \stackrel{q}{\rightarrow} H \rightarrow 1
\]
is a short exact sequence of left-orderable groups.  Then both $\mathrm{LO}(K)$ and $\mathrm{LO}(H)$ come equipped with a natural $G$-action, as follows.  If $g \in G$ and  $P \in \mathrm{LO}(H)$, define $g \cdot P = q(g)Pq(g)^{-1}$.  On the other hand if $P \in \mathrm{LO}(K)$, we will use $\varphi_g \in Aut(K)$ to denote the automorphism of $K$ defined by $\varphi_g(k) = gkg^{-1}$ and set $g \cdot P = \varphi_g(P)$.

Recall that a subgroup $H$ of $G$ is called \emph{absolutely convex} if $H$ is convex relative to every left-ordering of $G$.  

\begin{proposition}
\label{ses proposition}
Suppose that 
\[1 \rightarrow K \stackrel{i}{\rightarrow} G \stackrel{q}{\rightarrow} H \rightarrow 1
\]
is a short exact sequence of countable groups, and set $X = \mathrm{LO}(K) \times \mathrm{LO}(H)$.  If we equip $X$ with the diagonal $G$-action defined by 
\[ g \cdot (P_K, P_H) = (g \cdot P_K, g \cdot P_H)
\]
 and let $E^X_G$ denote the resulting equivalence relation, then $E^X_G \leq_B E_\mathsf{lo}(G)$.  If $K$ is absolutely convex, then $E^X_G \cong_B E_\mathsf{lo}(G)$.
\end{proposition}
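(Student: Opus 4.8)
The plan is to exhibit an explicit Borel reduction $\lex\colon X \to \LO(G)$ in one direction, and its (partial) inverse $r$ in the other. For the first claim, $E^X_G \leq_B E_\mathsf{lo}(G)$, I would reuse the map $\lex(P_K, P_H) = i(P_K) \cup q^{-1}(P_H)$ already introduced in the proof of Proposition~\ref{quotient condition}. Thus the first step is to record that $\lex$ is a well-defined Borel (indeed continuous) injection from $X$ into $\LO(G)$. The key computation is to check $G$-equivariance: for $g \in G$ and $(P_K, P_H) \in X$, one has $g\lex(P_K,P_H)g^{-1} = g\,i(P_K)\,g^{-1} \cup g\,q^{-1}(P_H)\,g^{-1} = i(\varphi_g(P_K)) \cup q^{-1}(q(g)P_Hq(g)^{-1}) = \lex(g\cdot P_K, g\cdot P_H)$, using $g\,q^{-1}(P_H)\,g^{-1} = q^{-1}(q(g)P_Hq(g)^{-1})$ (which follows from applying $q$) and $i(\varphi_g(P_K)) = g\,i(P_K)\,g^{-1}$ (which is the definition of the $G$-action on $\LO(K)$, once one notes $i$ is a $\varphi_g$-equivariant inclusion). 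Equivariance plus injectivity immediately gives that $\lex$ is a reduction of $E^X_G$ to $E_\mathsf{lo}(G)$.

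For the second claim, suppose $K$ is absolutely convex. I would introduce the restriction-type map $r\colon \LO(G) \to X$ sending $P \mapsto (i^{-1}(P \cap i(K)),\, q(P))$, i.e. the pair consisting of the restriction of $P$ to $K$ and the push-forward of $P$ to $H$. The push-forward $q(P)$ is a positive cone on $H$ precisely because $K$ is $<_P$-convex (this is the standard fact, cited in the preliminaries, that for $C \lhd G$ the quotient $G/C$ is left-orderable by the induced order iff $C$ is relatively convex in $G$); absolute convexity of $K$ guarantees this works for \emph{every} $P \in \LO(G)$, so $r$ is total. The next step is to verify $r$ is Borel (it is continuous, checked on subbasic sets as in Proposition~\ref{quotient condition}) and $G$-equivariant, by the same bookkeeping as above. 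Then one checks $r \circ \lex = \id_X$: given $(P_K,P_H)$, restricting $i(P_K) \cup q^{-1}(P_H)$ back to $K$ returns $P_K$ (since $q^{-1}(P_H)$ contributes nothing inside $i(K) \setminus \{1\}$, as $q$ kills $i(K)$), and pushing forward returns $P_H$. Since $\lex$ is injective and $r\circ\lex = \id$, both $\lex$ and $r$ are Borel bijections onto each other's images... but to get $\cong_B$ one wants a \emph{single} Borel bijection $X \to \LO(G)$ that is a reduction. Here I would argue that $\lex$ is already \emph{surjective}: given $P \in \LO(G)$, set $P_K = r(P)_1$ and $P_H = r(P)_2 = q(P)$; then I claim $\lex(P_K,P_H) = P$. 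Indeed $g \in P$ with $g \in i(K)$ iff $g \in P_K$-part, and $g \in P$ with $q(g) \neq 1$ iff $q(g) \in q(P) = P_H$ (using convexity of $K$ once more, so that $q(g) \in q(P)$ forces $g \in P$ up to an element of $K$, and then convexity pins down membership). So $\lex$ is a continuous bijection between compact metrizable spaces, hence a Borel isomorphism, and it is an equivariant reduction, giving $E^X_G \cong_B E_\mathsf{lo}(G)$.

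\textbf{Main obstacle.} The delicate point is the claim that $\lex$ is surjective when $K$ is absolutely convex — equivalently, that an arbitrary positive cone $P$ on $G$ is completely determined by the pair $(P \cap K,\, q(P))$, i.e. that $P = i(i^{-1}(P\cap i(K))) \cup q^{-1}(q(P))$. The $\supseteq$ inclusion $i(P\cap i(K)) \cup q^{-1}(q(P)) \subseteq P$ is the real content: if $q(g) \in q(P)$ then $g = i(k) g'$ for some $g' \in P$ with $q(g') = q(g)$, and one must show $g \in P$; this is exactly where $<_P$-convexity of $K = i(K)$ enters (if $g \notin P$, then $g^{-1} \in P$, and $g^{-1}g' = $ an element of $i(K)$ lies strictly between $1$ and $g'$ in the order, so convexity forces it into... one has to chase this carefully, comparing $g$, $g'$, and their ratio in $i(K)$ against the identity). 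I expect the bulk of the write-up to be this convexity argument; everything else (Borelness, equivariance, the identity $r\circ\lex=\id$) is routine and parallels computations already in the paper. One should also double-check the edge case $q(g) = 1$, i.e. $g \in i(K)\setminus\{1\}$, where membership of $g$ in $P$ is governed purely by $P\cap i(K)$ and there is nothing to prove.
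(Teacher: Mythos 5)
Your proposal is correct and follows essentially the same route as the paper: the same map $\theta(P_K,P_H)=i(P_K)\cup q^{-1}(P_H)$, the same equivariance computation to get the reduction, and the same surjectivity-via-convexity argument (which the paper simply asserts) to upgrade to a Borel isomorphism when $K$ is absolutely convex. Your write-up is, if anything, more detailed than the paper's on the inverse map and the convexity step; only minor bookkeeping remains (e.g.\ defining the push-forward cone as $q(P)\setminus\{1\}$ so it does not contain the identity).
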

\begin{proof}
First, we note that since the actions of $G$ on $\LO(H)$ and $\LO(K)$ are by homeomorphisms, each of $E^{\LO(H)}_G$ and $E^{\LO(K)}_G$ is a countable Borel equivalence relation.  Since all our groups are countable, $E^X_G$ is also a countable Borel equivalence relation.

Next, define a map $\theta\colon X \rightarrow \LO(G)$ by $\theta(P,Q) = i(P) \cup q^{-1}(Q)$.  Note that $\theta$ is continuous and hence Borel, in fact this is a Borel reduction.  This follows from the observation that $g \cdot (P, Q) = (P'Q')$ if and only if $gPg^{-1} = P'$ and $q(g)Qq(g)^{-1} = Q'$, which happens if and only if $$g(i(P) \cup q^{-1}(Q))g^{-1} = gi(P)g^{-1} \cup gq^{-1}(Q)g^{-1} = i(P') \cup q^{-1}(Q').$$

If $K$ is absolutely convex, then every positive cone $P_G \in LO(G)$ is of the form $i(P_K) \cup q^{-1}(P_H)$, and so the the map $\theta$ is surjective.  In this case, $\theta$ admits an inverse which is also a Borel reduction.
\end{proof}

We next make a general observation to be used in combination with the previous result.

\begin{proposition}
\label{product}
Suppose that $X, Y$ are each equipped with a Borel $G$-action for some countable group $G$.  Let $E^{X \times Y}_G$ denote the countable equivalence relation arising from the diagonal action $g \cdot(x,y) = (g \cdot x, g \cdot y)$ on $X \times Y$, assume that it is Borel.  If there exists a $G$-equivariant Borel map $\theta \colon X \rightarrow Y$ then $E^X_G \leq_B E^{X \times Y}_G$ .
\end{proposition}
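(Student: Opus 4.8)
The plan is to exhibit an explicit Borel reduction from $E^X_G$ to $E^{X\times Y}_G$ using the given equivariant map $\theta$. The natural candidate is the ``graph map''
\[
\Gamma\colon X \to X\times Y, \qquad \Gamma(x) = (x, \theta(x)).
\]
This is Borel because $\theta$ is Borel and the identity is Borel, so $\Gamma$ is Borel as a map into the product of standard Borel spaces. First I would verify that $\Gamma$ is a homomorphism: if $x\mathbin{E^X_G} x'$, say $x' = g\cdot x$, then by $G$-equivariance of $\theta$ we have $\theta(x') = \theta(g\cdot x) = g\cdot\theta(x)$, hence $g\cdot(x,\theta(x)) = (g\cdot x, g\cdot\theta(x)) = (x',\theta(x'))$, so $\Gamma(x)\mathbin{E^{X\times Y}_G}\Gamma(x')$.

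Next I would check the reverse implication, which is where the first coordinate does the work. Suppose $\Gamma(x)\mathbin{E^{X\times Y}_G}\Gamma(x')$, i.e.\ there is $g\in G$ with $g\cdot(x,\theta(x)) = (x',\theta(x'))$. Comparing first coordinates gives $g\cdot x = x'$, so in particular $x\mathbin{E^X_G}x'$. Thus $\Gamma$ is a Borel reduction from $E^X_G$ to $E^{X\times Y}_G$, which is exactly the claim $E^X_G\leq_B E^{X\times Y}_G$.

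I do not anticipate a serious obstacle here: the argument is essentially formal once one has the equivariant map. The only points requiring a word of care are (i) that $E^{X\times Y}_G$ is a \emph{Borel} equivalence relation on a standard Borel space so that the notion $\leq_B$ applies — but this is hypothesized in the statement — and (ii) that $\Gamma$ is genuinely Borel, which follows since a map into a product is Borel iff each coordinate is. The key structural insight is simply that the diagonal action restricted to the graph $\{(x,\theta(x)) : x\in X\}$ of an equivariant map is isomorphic, as a $G$-space, to $X$ itself, so the graph map is an equivariant Borel embedding and hence a Borel reduction.
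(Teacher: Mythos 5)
Your proposal is correct and is essentially identical to the paper's own proof: both use the graph map $x \mapsto (x,\theta(x))$, verify the forward implication via equivariance of $\theta$, and obtain the converse by comparing first coordinates. No gaps.
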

\begin{proof}
Define a Borel reduction $\phi : X \rightarrow X \times Y$ by $\phi(x) = (x, \theta(x))$.  Suppose that $g \cdot x_1 = x_2$ for some $x_1, x_2 \in X$.  Then $$g \cdot \phi(x_1) = g \cdot (x_1, \theta(x_1)) = (g \cdot x_1, g \cdot \theta(x_1)) = (x_2, \theta(g \cdot x_1)) = (x_2, \theta( x_2)) = \phi(x_2).$$

Conversely if $g \cdot \phi(x_1) = \phi(x_2)$ then $(g \cdot x_1, g\cdot\theta(x_1)) = (x_2, \theta(x_2))$ and so $g \cdot x_1 = x_2$ from the first component.
\end{proof}

\begin{proposition}
Suppose that 
\[1 \rightarrow K \stackrel{i}{\rightarrow} G \stackrel{q}{\rightarrow} H \rightarrow 1
\]
is a short exact sequence of left-orderable groups.  
\begin{enumerate-(1)}
\item If there exists a $G$-equivariant map $\theta \colon \mathrm{LO}(K) \rightarrow \mathrm{LO}(H)$ then $E_\mathsf{lo}(K) \leq_B E_\mathsf{lo}(G)$, and
\item If there exists a $G$-equivariant map $\theta \colon \mathrm{LO}(H) \rightarrow \mathrm{LO}(K)$ then $E_\mathsf{lo}(H) \leq_B E_\mathsf{lo}(G)$. 
\end{enumerate-(1)}
\end{proposition}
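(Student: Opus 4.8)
The plan is to obtain each of the two reductions as a composite of the reduction of Proposition~\ref{ses proposition} with the reduction of Proposition~\ref{product}. Write $X=\LO(K)\times\LO(H)$ for the standard Borel $G$-space carrying the diagonal $G$-action; by Proposition~\ref{ses proposition} we already have $E^X_G\leq_B E_\mathsf{lo}(G)$, witnessed by $(P,Q)\mapsto i(P)\cup q^{-1}(Q)$.

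For statement~(2) I would first observe that, since $q$ is surjective, the orbit equivalence relation of the $G$-action $g\cdot Q=q(g)Qq(g)^{-1}$ on $\LO(H)$ is literally $E_\mathsf{lo}(H)$. Applying Proposition~\ref{product} with $\LO(H)$ as domain, $\LO(K)$ as codomain, and the hypothesised $G$-equivariant $\theta\colon\LO(H)\to\LO(K)$ then gives $E_\mathsf{lo}(H)\leq_B E^{\LO(H)\times\LO(K)}_G$, and the coordinate swap identifies this with $E^X_G$; composing with Proposition~\ref{ses proposition} yields $E_\mathsf{lo}(H)\leq_B E_\mathsf{lo}(G)$. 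Unwound, the composite reduction is $Q\mapsto i(\theta(Q))\cup q^{-1}(Q)$, and the only point needing a direct check is the backward implication: from a $G$-conjugacy of $i(\theta(Q_1))\cup q^{-1}(Q_1)$ with $i(\theta(Q_2))\cup q^{-1}(Q_2)$ via some $g\in G$, applying $q$ — and using that $i(K)$ is convex in these orderings while $q^{-1}(Q_j)$ is disjoint from $i(K)$ — yields $Q_2=q(g)Q_1q(g)^{-1}$, so that $Q_1\mathbin{E_\mathsf{lo}(H)}Q_2$ because $q(g)\in H$.

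Statement~(1) is the mirror image: the hypothesised $G$-equivariant $\theta\colon\LO(K)\to\LO(H)$ goes into Proposition~\ref{product} with $\LO(K)$ as domain and $\LO(H)$ as codomain, producing $E^{\LO(K)}_G\leq_B E^X_G\leq_B E_\mathsf{lo}(G)$, where $E^{\LO(K)}_G$ is the orbit relation of $g\cdot P=\varphi_g(P)$ on $\LO(K)$, whose restriction to $K\leq G$ is the conjugation action defining $E_\mathsf{lo}(K)$; the composite reduction works out to $P\mapsto i(P)\cup q^{-1}(\theta(P))$. The step I expect to be the main obstacle is again the backward implication: intersecting a $G$-conjugacy of the two orderings with $i(K)$ only returns $P_2=\varphi_g(P_1)$ for some $g\in G$, and one has to argue that the $\LO(H)$-coordinate — kept consistent precisely by the $G$-equivariance of $\theta$ — together with convexity of $i(K)$ is enough for Proposition~\ref{product} to apply, that is, that the relation being reduced is genuinely the one induced by the full $G$-action on $\LO(K)$. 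Everything else (continuity, hence Borelness, of all the maps involved; the forward homomorphism property; the bookkeeping of which generators lie in $i(K)$ and which in $q^{-1}(\cdot)$) is routine and parallels the verifications already carried out in Propositions~\ref{ses proposition}, \ref{product}, and~\ref{quotient condition}.
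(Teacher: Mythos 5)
Your route is exactly the paper's: its entire proof of this proposition is the one\-/line observation that it ``follows immediately'' from Proposition~\ref{ses proposition} and Proposition~\ref{product}, which is precisely the composite you build. Your treatment of part~(2) is complete: since $q$ is surjective, the $G$-orbits on $\LO(H)$ coincide with the $H$-conjugacy orbits, so $E^{\LO(H)}_G$ really is $E_\mathsf{lo}(H)$ and the chain $E_\mathsf{lo}(H)\leq_B E^X_G\leq_B E_\mathsf{lo}(G)$ closes with no further work. The ``main obstacle'' you flag in part~(1), however, is a genuine issue, and it is one the paper's own proof does not address: the composite $P\mapsto i(P)\cup q^{-1}(\theta(P))$ is a reduction of $E^{\LO(K)}_G$ (orbits of the full conjugation action $g\cdot P=\varphi_g(P)$ of $G$ on $\LO(K)$) to $E_\mathsf{lo}(G)$, whereas $E_\mathsf{lo}(K)$ is by definition the orbit relation of the $K$-conjugation action. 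One has $E_\mathsf{lo}(K)\subseteq E^{\LO(K)}_G$, but the containment can be strict, and a subrelation need not Borel reduce to the relation containing it; equivariance of $\theta$ makes the $\LO(H)$-coordinate carry no additional information in the backward implication, so it cannot repair this. Concretely, for $G=\mathbb{Z}^2\rtimes\mathbb{Z}$ with a hyperbolic monodromy and $\theta$ a constant map, $E_\mathsf{lo}(\mathbb{Z}^2)$ is equality while $\varphi_t$ moves orderings of $\mathbb{Z}^2$, so the proposed map identifies points that are not $E_\mathsf{lo}(K)$-equivalent. So you have not so much left a gap as correctly located one that the paper itself leaves open; as written, what the argument proves in case~(1) is $E^{\LO(K)}_G\leq_B E_\mathsf{lo}(G)$, and an extra hypothesis (e.g.\ that every $G$-conjugate of a cone in the relevant orbit is already a $K$-conjugate) is needed to downgrade this to $E_\mathsf{lo}(K)$.
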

\begin{proof}
This follows immediately from Proposition \ref{ses proposition} and Proposition \ref{product}.
\end{proof}

\begin{corollary}
Suppose that 
\[1 \rightarrow K \stackrel{i}{\rightarrow} G \stackrel{q}{\rightarrow} H \rightarrow 1
\]
is a short exact sequence of left-orderable groups. Then:
\begin{enumerate-(1)}
\item If $K$ admits positive cone $P$ such that $gPg^{-1} = P$ for all $g \in G$, then $E_\mathsf{lo}(H) \leq_B E_\mathsf{lo}(G)$.
\item If $H$ is bi-orderable then $E_\mathsf{lo}(K) \leq_B E_\mathsf{lo}(G)$.
\end{enumerate-(1)}
\end{corollary}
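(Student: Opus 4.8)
The plan is to read both reductions off the preceding Proposition: each hypothesis produces a $G$-fixed point in one of the spaces $\LO(K)$, $\LO(H)$, and a constant map landing at such a fixed point is automatically $G$-equivariant. It is equally quick, and more transparent about which conjugacy relation gets reduced, to write the maps by hand using the lexicographic cone $\theta(P_K,P_H)=i(P_K)\cup q^{-1}(P_H)$ of Proposition~\ref{ses proposition}; I will follow that route.

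The reduction $E_\mathsf{lo}(H)\leq_B E_\mathsf{lo}(G)$ is the part that needs genuine care, and I would derive it from a positive cone $P_0$ of $K$ that is fixed by conjugation by every element of $G$. Put $\Psi(P_H)=i(P_0)\cup q^{-1}(P_H)$, a continuous (hence Borel) map $\LO(H)\to\LO(G)$. If $P_H'=hP_Hh^{-1}$, choose any $g$ with $q(g)=h$; conjugating $\Psi(P_H)$ by $g$ fixes the kernel coordinate, since $\varphi_g(P_0)=P_0$, and sends the quotient coordinate to $q(g)P_Hq(g)^{-1}=P_H'$, so $g\Psi(P_H)g^{-1}=\Psi(P_H')$. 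Conversely, if $g\Psi(P_H)g^{-1}=\Psi(P_H')$, reading the quotient coordinate gives $q(g)P_Hq(g)^{-1}=P_H'$, so $P_H$ and $P_H'$ are $H$-conjugate. Thus $\Psi$ reduces $E_\mathsf{lo}(H)$ to $E_\mathsf{lo}(G)$; in the language of the preceding Proposition, $\Psi$ is the reduction attached to the $G$-equivariant constant map $\LO(H)\to\LO(K)$ with value $P_0$.

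For $E_\mathsf{lo}(K)\leq_B E_\mathsf{lo}(G)$ I would argue dually from bi-orderability of $H$. A bi-ordering of $H$ has a positive cone $P_1$ with $hP_1h^{-1}=P_1$ for all $h\in H$, and since the $G$-action on $\LO(H)$ factors through $q$ this makes $P_1$ a $G$-fixed point of $\LO(H)$. Setting $\Phi(P_K)=i(P_K)\cup q^{-1}(P_1)$, conjugation by $g\in G$ now leaves the quotient coordinate $P_1$ fixed and moves the kernel coordinate by $\varphi_g$; the same coordinate bookkeeping shows that $\Phi$ intertwines the conjugacy action on $\LO(K)$ with $E_\mathsf{lo}(G)$, which is the first clause of the preceding Proposition applied to the $G$-equivariant constant map $\LO(K)\to\LO(H)$ at $P_1$.

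I expect the crux in each case to be the forward implication, namely ensuring conjugation acts as the identity on the coordinate that is held constant. This is exactly what the hypotheses provide: every lift $g$ of $h$ satisfies $\varphi_g(P_0)=P_0$ because $P_0$ is $G$-invariant, and every $g$ satisfies $q(g)P_1q(g)^{-1}=P_1$ because $P_1$ is bi-invariant; without these invariances the conjugation perturbs the frozen coordinate and $\theta$ fails to be a homomorphism of equivalence relations. A secondary point to keep straight is that the conjugacy $G$-action on $\LO(K)$ is a priori coarser than the conjugacy $K$-action; this never intervenes for $\Psi$, where only the quotient coordinate moves and surjectivity of $q$ makes the $G$- and $H$-orbits on $\LO(H)$ agree, and for $\Phi$ it is absorbed by the identification made in the preceding Proposition, to which I would defer for the identification of the reduced relation with $E_\mathsf{lo}(K)$.
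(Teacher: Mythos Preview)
Your approach is exactly the paper's: take $\theta$ to be a constant map into a $G$-fixed point and invoke the preceding proposition. The issue is that you have paired hypotheses with the wrong conclusions. You use the $G$-invariant cone $P_0\in\LO(K)$ to build $\Psi$ and deduce $E_\mathsf{lo}(H)\leq_B E_\mathsf{lo}(G)$, and you use a bi-invariant $P_1\in\LO(H)$ to build $\Phi$ and deduce $E_\mathsf{lo}(K)\leq_B E_\mathsf{lo}(G)$; the corollary as written asserts the opposite pairing in each item.

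Your pairing is in fact the one the constant-map argument supports. A $G$-fixed point of $\LO(K)$ produces a $G$-equivariant constant map $\LO(H)\to\LO(K)$, which by part~(2) of the preceding proposition yields $E_\mathsf{lo}(H)\leq_B E_\mathsf{lo}(G)$; dually, a $G$-fixed point of $\LO(H)$ (supplied by bi-orderability of $H$) gives a $G$-equivariant constant map $\LO(K)\to\LO(H)$, and part~(1) yields $E_\mathsf{lo}(K)\leq_B E_\mathsf{lo}(G)$. Neither hypothesis, taken alone, furnishes the constant map needed for the conclusion it is paired with in the printed statement: a $G$-invariant cone in $K$ says nothing about the existence of a $G$-fixed point in $\LO(H)$, and bi-orderability of $H$ says nothing about a $G$-fixed point in $\LO(K)$. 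So your explicit computations with $\Psi$ and $\Phi$ are correct, and they agree with the paper's method, but they establish the statement with items~(1) and~(2) interchanged rather than the corollary as literally stated; the printed corollary appears to have its labels swapped.
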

\begin{proof}
In both cases above we can apply the previous proposition taking $\theta$ to be a constant map.
\end{proof}

A corollary of this last proof already appears as \cite[Proposition~3.4]{CalCla22}, where the $G$-equivariant map used in that proposition is chosen to be a constant map.

\end{document}